\theoremstyle{plain}
\newtheorem{theorem}{Theorem}[chapter]
\newtheorem{cor}[theorem]{Corollary}
\newtheorem{result}[theorem]{Result}
\newtheorem{lemma}[theorem]{Lemma}
\newtheorem*{lemma*}{Technical Lemma}
\theoremstyle{definition}
\newtheorem{example}[theorem]{Example}
\newtheorem{fact}[theorem]{Fact}
\theoremstyle{remark}
\newtheorem{remark}[theorem]{Remark}
\theoremstyle{plain}
\def\suchthat{\; : \;}
\def\C{\mathbb{C}}
\def\D{\mathbb{D}}
\def\X{{\mathcal X}}
\def\sgn{{\mb{sgn}}}
\def\l{\left}
\def\r{\right}
\def\<{\langle}
\def\>{\rangle}
\def\mb{\mbox}
\newcommand{\one}{{\mathbf 1}}
\newcommand{\E}{\mbox{\bf E}}
\def\bar{\overline}
\def\P{{\bf P}}
\def\d{\partial}
\newcommand{\sm}{{\raise0.3ex\hbox{$\scriptstyle \setminus$}}}
\def\mb{\mbox}
\def\l{\left}
\def\r{\right}
\def\CHI{\mathchoice%
{\raise2pt\hbox{$\chi$}}%
{\raise2pt\hbox{$\chi$}}%
{\raise1.3pt\hbox{$\scriptstyle\chi$}}%
{\raise0.8pt\hbox{$\scriptscriptstyle\chi$}}}
\def\smalloplus{\raise1pt\hbox{$\,\scriptstyle \oplus\;$}}
\definecolor{mycolour}{RGB}{150,0,0}
\begin{document}

\begin{titlepage}
	\begin{center}
		\vspace{.5in}
		{\LARGE  \textbf{Hole probabilities for determinantal point processes in the complex plane} } \\
		\vspace{0.8in}
		{\large  A Dissertation \\ 
			submitted in partial fulfilment of the requirements \\
		\vspace{0.04in}
		for the award of the degree of} \\
		\vspace{0.3in}
		{{\LARGE \bsifamily 
		Doctor of Philosophy}}\\  
		\vspace{.2in}
		{\large by}\\
		\vspace{0.05in}
		{\large  Kartick Adhikari}\\
		\vspace{1.1in}
		\includegraphics[scale=.5]{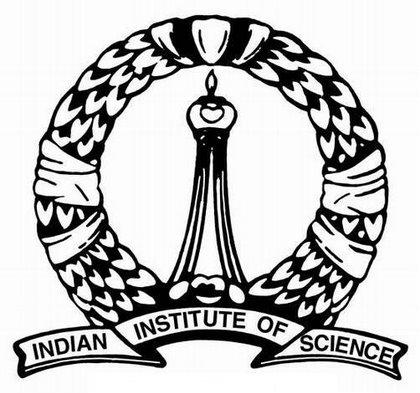} \\
		\vspace{.1in}
		{\large Department of Mathematics \\
		Indian Institute of Science \\
			Bangalore - 560012 \\
		July 2016 \\}
	\end{center}
\end{titlepage}

\frontmatter

\chapter{}

\begin{center}
To
\\{\it My Parents}
\\{\it and }
\\ {\it Teachers}
\end{center}

\chapter{Declaration}
\vspace{0.5in}
\noindent I hereby declare that the work reported in this thesis is entirely original and has
been carried out by me under the supervision of Prof.~Manjunath Krishnapur at the Department of
Mathematics, Indian Institute of Science, Bangalore. I further declare
that this work has not been the basis for the award of any degree, diploma, fellowship,
associateship or similar title of any University or Institution.\\
\vspace*{1in}

\noindent $\begin{array}{lcr}
\textrm{Kartick Adhikari} & \hspace*{1.95in} &~ \\
\textrm{S. R. No. 9310-310-091-06665} & \hspace*{1.95in}   &~ \\
\textrm{Indian Institute of Science} & \hspace*{1.95in}   & ~ \\
\textrm{Bangalore} & \hspace*{1.95in}  & ~ \\
\textrm{July} ,~ 2016 & \hspace*{1.95in}   & ~ \\
~ & \hspace*{1.95in}   & ~ \\
~ & \hspace*{1.95in}   & ~ \\
~ & \hspace*{1.95in}   & ~ \\
~& \hspace*{1.95in}   & \textrm{Prof.~Manjunath Krishnapur} \\
~& \hspace*{1.95in}  & \textrm{(Research advisor)}
\end{array}$

\chapter{Acknowledgement}

I take this opportunity to thank
my  PhD supervisor, Prof. Manjunath Krishnapur for his continuous support,
enthusiasm and motivations all throughout my stay
at IISc. I am thankful to him for introducing me
to the area of random analytic functions and random matrix theory  and for agreeing
to supervise my thesis. My discussions with him
throughout has been very positive and he has
always encouraged me to try out new ideas. 
He not only provided me with excellent academic
suggestions but also helped me out
with other non-academic issues as well. Many thanks
to him for providing me with opportunities to visit
some of the best conferences/workshops in my area.
In general, thesis writing may be quite demanding.
 Thanks to him for going through
the thesis draft meticulously and helping me to
improve the organization of the thesis.

 Many people were indirectly
involved with my PhD thesis. Specifically,  I thank Prof. Srikanth K. Iyer and Prof. Mrinal K. Ghosh for introducing me to the beautiful world of probability and stochastic
processes.  I am also thankful to the faculty members of the Dept. of Mathematics at IISc for
offering a wide variety of courses. I really benefited a lot from these courses. I thank Prof. Arup Bose for indicating the problem that we are considering in Chapter \ref{ch:beta}.

I had many fruitful discussions with colleagues at IISc. My special thanks to Nanda, Tulasi, Koushik da, Subhamay Da, Tamal Da, Indrajit, Ramiz, Vikram, Arpan, Samya, Chandan, Choiti, Eliza. 
I am also thankful to the Maths Dept. staff at IISc, Ms. Mahalakshmi, Ms. Bharathi, Mr. Narendranath, Parmesh and Parthivan. They helped me in carrying out smoothly, all the complex administrative formalities during my stay at IISc. Many thanks to our system
administrator, Ujey, Surendar and librarian, Shanmugavel
for their quick and prompt actions whenever
I needed help.\\

 I thank to my colleagues at IISc for all
the fun and support, that made my
campus life a second home. In this regard, I
thank  Jaikrishanan, Divakaran, Rajeev, Pranav, Bidyut Da,
 Dinesh Da, Abhijit Da, Sayan Da, Santanu Da, Prahllad Da, Sudipto Da, Sayani Di, Ratna Di, Soma Di,   Atreyee Di and Joyti Di for their love and
support in my long journey. Among the juniors I thank Bidhan, Hari, Amar, Soumitra, Samarpita, Anway, Samrat, Sanjay, Monojit, Somnath Hazra, Somnath Pradhan, Subhajit, Manish and Rakesh. I thank them for all the great moments that we had here together. 

My special thanks to  Debanjan Da, Prasenjit Da, Satadal Da, Nirupam Da, Abirami, Soumyadeep Da, Anirban Da  and Mathew for their enjoyable company.

I take this opportunity to express my gratitude to all my teachers. In particular, I thank
to Mrinal Babu, Indra Babu, Tapas babu, Mahadev Babu for introducing me in mathematics and helping  me a lot in difficult times.

I would thank my parents and family members.
I am fortunate to have such great family members.
Their blessings and prayer have helped me to
overcome the difficult days of my graduate life. 
I thank them for all their love, patience and for
always being there when I needed them. My Special thanks to my brothers for helping me to concentrate fully on my
study without getting bogged down a midst
other non-academic responsibilities.

The work was done in Department of Mathematics, Indian Institute of Science, and I thank to my Institute and CSIR for providing me the scholarship and other facilities.

\chapter{Abstract}
We study the hole probabilities for ${\mathcal X}_{\infty}^{(\alpha)}$ ($\alpha>0$), a determinantal point process in the complex plane with the kernel  $\mathbb K_{\infty}^{(\alpha)}(z,w)=\frac{\alpha}{2\pi}E_{\frac{2}{\alpha},\frac{2}{\alpha}}(z\bar w)e^{-\frac{|z|^{\alpha}}{2}-\frac{|w|^{\alpha}}{2}}$ with respect to Lebesgue measure on the complex plane, where $E_{a,b}(z)$ denotes the Mittag-Leffler function. Let $U$ be an open subset of $D(0,(\frac{2}{\alpha})^{\frac{1}{\alpha}})$ and ${\mathcal X}_{\infty}^{(\alpha)}(rU)$ denote the number of points  of ${\mathcal X}_{\infty}^{(\alpha)}$ that fall in $rU$. Then, under some  conditions on $U$, we  show that 
$$
\lim_{r\to \infty}\frac{1}{r^{2\alpha}}\log\mathbb P[\mathcal X_{\infty}^{(\alpha)}(rU)=0]=R_{\emptyset}^{(\alpha)}-R_{U}^{(\alpha)},
$$
where $\emptyset$ is the empty set and 
$$
R_U^{(\alpha)}:=\inf_{\mu\in \mathcal P(U^c)}\left\{\iint \log{\frac{1}{|z-w|}}d\mu(z)d\mu(w)+\int
|z|^{\alpha}d\mu(z) \right\},
$$
$\mathcal P(U^c)$ is the space of all compactly supported  probability measures with support in $U^c$. Using potential theory, we give an explicit formula for $R_U^{(\alpha)}$, the minimum possible energy of a probability measure compactly supported on $U^c$ under logarithmic potential with an external field $\frac{|z|^{\alpha}}{2}$. In particular, $\alpha=2$ gives the hole probabilities for the infinite ginibre ensemble. Moreover, we calculate $R_U^{(2)}$ explicitly for some special sets like annulus, cardioid, ellipse, equilateral triangle and half disk.

\tableofcontents

\chapter{Notation}

$
\begin{array}{ll}
\C  & \mbox{The set of complex numbers}
\\ \D & \mbox{Unit disk in the complex plane with center at origin}
\\ m & \mbox{Lebesgue measure on the complex plane}
\\ U & \mbox{Open set in the complex plane }
\\ rU & \{r.z\suchthat z\in U\}
\\ U^c & \C \backslash U , \mbox{complement of the set $U$}
\\ \mathcal P(E) & \mbox{The set of compactly supported probability measures with support in $E$}
\\ p_{\mu}(z) & \int \log\frac{1}{|z-w|}d\mu(w), \mbox{ the potential of $\mu$ at $z$}
\\ R_{\mu}^{(g)} & \iint \log\frac{1}{|z-w|}d\mu(z)d\mu(w)+\int g(|z|)d\mu(z), 
\\ R_U^{(g)} & \inf\{R_{\mu}^{(g)}\suchthat \mu \in \mathcal{P}(\C\backslash U)\}, 
\\ R_{\mu}^{(\alpha)} & \iint \log\frac{1}{|z-w|}d\mu(z)d\mu(w)+\int |z|^{\alpha}d\mu(z), 
\\ R_U^{(\alpha)} & \inf\{R_{\mu}^{(\alpha)}\suchthat \mu \in \mathcal{P}(\C\backslash U)\}, 
\\ \X & \mbox{A point process on the complex plane}
\\ \X(\Omega) & \mbox{The number of points of $\X$ that fall in $\Omega \subset \C$}
\\ \mathbb V(\X(\Omega)) & \mbox{The variance of $\X(\Omega)$}
\\ T & \mbox{the solution of $tg'(t)=2$}
\\ D(0,T) & \mbox{Disk of radius $T$ center at origin}
\end{array}
$

\mainmatter
\chapter{Introduction}
Determinantal point process was introduced by Macchi \cite{macchi} to describe the statistical distribution of a fermion system in thermal equilibrium. This process is also known as fermion random point process. The probability distribution of the determinantal  point process is characterized by a determinant of a matrix built from a kernel. Moreover, the probability density vanishes whenever two points are equal which implies that the points tend to repel each other. The determinantal point processes arise in  quantum physics, combinatorics and random matrix theory. Recently \cite{kulesza}, it has found applications in machine learning also.

Let  $\mathcal X$ be a point process (see \cite{verejones}, p. 7)  in the complex plane and let $U$ be an open set in $\C$. The probability that $U$ contains no points of $\mathcal X$ is called {\it hole}/{\it gap  probability}  for $U$. Hole probabilities for various point processes have been studied extensively, e.g., see \cite{sodin}, \cite{nishry10}, \cite{nishry11}, \cite{nishry12}, \cite{peled} and \cite{akemannhole}. We calculate the asymptotics of hole probabilities for certain determinantal point processes.

\section{Basic notions and definitions}
In this section we define point processes, correlation functions and determinantal point processes in the complex plane. We give examples of determinantal point processes that we are considering in this thesis.
\subsection{Point processes, joint intensities}
A {\bf point process} $\X$ in the complex plane is a random integer valued positive Radon measure on the complex plane. (Recall that a Radon measure is a Borel measure which is finite on compact sets). The point process $\X$ is said to be {\bf simple} if $\X$ almost surely assigns at most measure $1$ to singletons. Roughly speaking, the random set of points configuration in the complex plane is called a point process in the complex plane. The number of points of $\X$ that fall in $\Omega\subset \C$ is denoted by $\X(\Omega)$.

Let $\mu$ be a Radon measure on $\C$. If there exist functions  $\rho_k:\C^k\to[0,\infty)$ for $k\ge 1$, such that for any family of mutually disjoint subjects $D_1,\ldots,D_k$ of $\C$,
$$
\mathbb E\l[\prod_{i=1}^k\X(D_i)\r]=\int_{\prod_i D_i}\rho_k(x_1,\ldots,x_k)d\mu(x_1)\ldots d\mu(x_k),
$$
then  $\rho_k,\ k\ge 1$ are called {\bf correlation functions} or {\bf joint intensities} of a point process $\X$ with respect to $\mu$. In addition, we shall require that $\rho_k (x_1,\ldots,x_k)$ vanish if $x_i=x_j$ for some $i\neq j$. 

In most cases, the point process is described by its correlation functions. If the distribution of $\X(D_1),\X(D_2),\ldots,\X(D_k)$ is determined by its moments, then the correlation functions determine the law of $\X$ (for details, see \cite{manjubook}, Remark 1.2.4). The determinantal point process is determined by its correlation functions.

\subsection{Determinantal point processes}
Let $\mathbb K(x,y):\C^2 \to \mathbb C$ be a measurable function. A point process $\X $ in the complex plane is said to be a {\bf determinantal point process} with kernel $\mathbb K$ if it is simple and its joint intensities with respect to the measure $\mu$ satisfy
$$
\rho_k(x_1,\ldots,x_k)=\det(\mathbb K(x_i,x_j))_{1\le i,j\le k},
$$
for every $k\ge 1$ and $x_1,\ldots,x_k\in \C$. For a detailed  discussion on determinantal point processes we refer the reader to see \cite{soshnikovsurvey}, \cite{andersonbook},  \cite{manjubook} and \cite{russell}.

We skip the issue of existence and uniqueness of the determininatal point processes. We refer the reader to see Section 4.5 and Lemma 4.2.6 in \cite{manjubook} for existence and uniqueness respectively. The following fact says that if $\mathbb K$ is a finite dimensional projection kernel, then a determinantal process does exist.
\begin{fact}\label{ft:existence}
Let $\mu$ be a Radon measure on $\C$. Suppose $\{\varphi_k\}_{k=1}^n$ is an orthonormal set in $L^2(\C,\mu)$. Then there exists a determinantal point process, having $n$ points, with the kernel 
$$
\mathbb K_n(z,w)=\sum_{k=1}^n\varphi_k(z)\overline {\varphi_k(w)},
$$
with respect to background measure $\mu$.
\end{fact}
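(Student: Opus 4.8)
The plan is to exhibit the process explicitly: sample $n$ unordered points from a determinantal density on $\C^n$ and then check directly that its joint intensities are the claimed determinants. First I would introduce the matrix $A=(\varphi_j(z_i))_{1\le i,j\le n}$ and observe that $(\mathbb K_n(z_i,z_j))_{i,j}=AA^*$, so that
$$
\det\big(\mathbb K_n(z_i,z_j)\big)_{1\le i,j\le n}=|\det A|^2\ge 0.
$$
Hence
$$
p(z_1,\ldots,z_n):=\frac{1}{n!}\det\big(\mathbb K_n(z_i,z_j)\big)_{1\le i,j\le n}
$$
is a nonnegative, symmetric function on $\C^n$, and it is the candidate joint density (with respect to $\mu^{\otimes n}$) of the $n$ points.

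Next I would verify normalization. Expanding $|\det A|^2$ and using the Andr\'eief (Gram) identity together with the orthonormality relation $\int \varphi_j\overline{\varphi_l}\,d\mu=\delta_{jl}$ gives
$$
\int_{\C^n}\det\big(\mathbb K_n(z_i,z_j)\big)\,d\mu(z_1)\cdots d\mu(z_n)=n!\,\det\Big(\int \varphi_j\overline{\varphi_l}\,d\mu\Big)_{j,l}=n!,
$$
so $p$ integrates to $1$. Since the density vanishes whenever two coordinates coincide (a repeated row forces $\det A=0$), the sampled points are almost surely distinct, and the resulting random measure $\X=\sum_{i=1}^n\delta_{z_i}$ is a simple point process with exactly $n$ points; being finitely supported, it is automatically a Radon measure.

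The heart of the argument is computing the joint intensities. The key tool is the reproducing property of the projection kernel, namely $\int \mathbb K_n(z,w)\mathbb K_n(w,u)\,d\mu(w)=\mathbb K_n(z,u)$ and $\int \mathbb K_n(z,z)\,d\mu(z)=n$, which yields the integrating-out lemma: for any $1\le m\le n$,
$$
\int_{\C}\det\big(\mathbb K_n(x_i,x_j)\big)_{1\le i,j\le m}\,d\mu(x_m)=(n-m+1)\,\det\big(\mathbb K_n(x_i,x_j)\big)_{1\le i,j\le m-1}.
$$
Iterating this to remove the last $n-k$ variables from $p$, and multiplying by the combinatorial factor $n!/(n-k)!$ that counts the ordered ways of choosing which $k$ points land in the prescribed disjoint test sets, I would obtain
$$
\rho_k(x_1,\ldots,x_k)=\det\big(\mathbb K_n(x_i,x_j)\big)_{1\le i,j\le k},
$$
which is exactly the defining relation for a determinantal process with kernel $\mathbb K_n$.

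I expect the main obstacle to be the integrating-out lemma together with the careful bookkeeping of constants. Proving the lemma requires expanding the determinant over permutations, using the reproducing relation to collapse the cycle passing through the integrated variable, and then summing the surviving contributions; keeping track of the factor $(n-m+1)$ and matching it against the $n!/(n-k)!$ appearing in the definition of correlation functions is where counting errors most easily creep in. Everything else---nonnegativity, normalization, and simplicity---is routine once the density has been written down.
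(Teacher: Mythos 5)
Your proposal is correct and is essentially the standard argument that the paper points to (it does not reproduce a proof itself but cites \cite{manjubook}, pp.~65--66, where exactly this construction is carried out): write the density as $\frac{1}{n!}\det(\mathbb K_n(z_i,z_j))=\frac{1}{n!}|\det A|^2$, normalize via the Gram/Andr\'eief identity, and obtain the joint intensities by the integrating-out lemma for projection kernels. The bookkeeping also checks out: iterating your lemma from $m=n$ down to $m=k+1$ produces the factor $(n-k)!$, which cancels against $n!/(n-k)!$ to give $\rho_k=\det(\mathbb K_n(x_i,x_j))_{1\le i,j\le k}$ as required.
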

For the proof of Fact \ref{ft:existence} see \cite{manjubook}, pp. 65-66. This fact is also true for $n=\infty$, see Section 4.5 of \cite{manjubook}. Using Fact \ref{ft:existence} we give few examples determinantal point processes. 

\subsection{Examples of determinantal point processes}\label{sec:exdet}
 In this section we give few examples of determinantal point processes that we are considering in this thesis.
\begin{enumerate}
\item {\bf $n$-th Ginibre ensemble:} Let $G_{n}$ be a $n\times n$ matrix with i.i.d. complex Gaussian entries. Then the eigenvalues of $G_n$ form a determinantal point process in the complex plane with kernel
$$
\mathbb K_n^{(2)}(z,w)=\frac{1}{\pi}\sum_{k=0}^{n-1}\frac{(z\bar w)^k}{k!}e^{-\frac{1}{2}|z|^2-\frac{1}{2}|w|^2},
$$
with respect to Lebesgue measure on the complex plane. Equivalently, the vector of eigenvalues (with uniform order) has density
\begin{align*}
\frac{1}{\pi^n\prod_{k=1}^nk!}\prod_{i<j}|z_i-z_j|^2e^{-\sum_{k=1}^{n}|z_k|^2},
\end{align*} 
with respect to Lebesgue measure on $\C^n$. This is a determinantal point process from the random matrix theory, introduced by Ginibre \cite{ginibre}. For more determinantal point processes from the random matrix theory, irrelevant for us, we refer the reader to see \cite{dyson}, \cite{manjuthesis}, \cite{burda}, \cite{adhikari} and references therein.

\item{\bf Infinite Ginibre ensemble :} The determinantal point process in the complex plane with the kernel $\mathbb K_{\infty}^{(2)}(z,w)=\frac{1}{\pi} e^{z\bar w-\frac{1}{2}|z|^2-\frac{1}{2}|w|^2}$  with respect to Lebesgue measure on the complex plane, equivalently, with respect to the kernel $e^{z\bar w}$ with respect to background measure $\frac{1}{\pi}e^{-|z|^{2}}dm(z)$ on $\C$.  The kernel $e^{z\bar w}$ is a projection kernel from $L^2(\C, \frac{1}{\pi}e^{-|z|^{2}}dm(z))$ to the space of all entire functions in $L^2(\C, \frac{1}{\pi}e^{-|z|^{2}}dm(z))$. Since the kernel $\mathbb K_n^{(2)}(z,w)$ converge to the kernel $\mathbb K_{\infty}^{(2)}(z,w)$, the $n$-th Ginibre ensemble converges to the infinite Ginibre ensemble in distribution as $n\to \infty$.   

\item{\bf $\X_n^{(\alpha)}$:} For fixed $\alpha>0$, $\mathcal X_n^{(\alpha)}$ is a determinantal point process in the complex plane with the kernel 
 $$
 \mathbb K_n^{(\alpha)}(z,w)=\frac{\alpha}{2\pi}\sum_{k=0}^{n-1}\frac{(z\bar w)^k}{\Gamma(\frac{2}{\alpha}(k+1))}e^{-\frac{|z|^{\alpha}}{2}-\frac{|w|^{\alpha}}{2}},
 $$ 
 with respect to Lebesgue measure on the complex plane. Equivalently, the vector of points of $\mathcal X_n^{(\alpha)}$ (in uniform random order) has density
 \begin{align*}
 \frac{\alpha^n}{n!( 2\pi)^n\prod_{k=0}^{n-1}\Gamma(\frac{2}{\alpha}(k+1))}e^{-\sum_{k=1}^{n}|z_k|^{\alpha}}\prod_{i<j}|z_i-z_j|^2,
 \end{align*}
with respect to Lebesgue measure on $\C^n$. Note that, $\alpha=2$ gives the $n$-th Ginibre ensemble.

\item{\bf $\X_{\infty}^{(\alpha)}$ :} For fixed $\alpha>0$, $\X_{\infty}^{(\alpha)}$ is the determinatal point process in the complex plane with the kernel  $\mathbb K_{\infty}^{(\alpha)}(z,w)=\frac{\alpha}{2\pi}E_{\frac{2}{\alpha},\frac{2}{\alpha}}(z\bar w)e^{-\frac{|z|^{\alpha}}{2}-\frac{|w|^{\alpha}}{2}}$ with respect to Lebesgue measure on the complex plane, where $E_{a,b}(z)$ denotes the Mittag-Laffler function (see \cite{mittag}), an entire function when $a>0$ and $b>0$, defined by
$$
E_{a,b}(z)=\sum_{k=0}^{\infty}\frac{z^k}{\Gamma(ak+b)}.
$$
Note that, for $\alpha=2,$ $\mathcal X_{\infty}^{(2)}$ is the infinite Ginibre ensemble. The kernels $\mathbb K_n^{(\alpha)}(z,w)$ converge to the kernel $\mathbb K_{\infty}^{(\alpha)}(z,w)$ as $n\to \infty$. Therefore the point processes $\X_n^{(\alpha)}$ converge to the point process $\X_{\infty}^{(\alpha)}$ in distribution as $n\to \infty$. 

\item{\bf $\X_{n}^{(g)}$ :} Let the function $g:[0,\infty)\to [0,\infty)$ satisfies the following conditions:
\begin{enumerate}
\item $g(r)$ is increasing function in $r$ such that $re^{-\frac{g(r)}{2}}\to 0$ as $r\to \infty$.

\item $g$ is a twice differentiable function on $(0,\infty)$.

\item $\lim_{r\to 0^+}rg'(r)=0$ and $rg'(r)$ is increasing on $(0,\infty)$.

\item $c_k=\int_{0}^{\infty}r^{2k+1}e^{-g(r)}dr <\infty$ for all $k=0,1,\ldots$.
\end{enumerate} 
Observe that, $g(r)=r^{\alpha}$ satisfies the above conditions for $\alpha>0$. Then $\X_n^{(g)}$ is a determinantal point process with kernel 
$$
\mathbb K_{n}^{(g)}(z,w)=\sum_{k=0}^{n-1}\frac{(z\bar w)^k}{c_k^{(n)}}e^{-\frac{ng(|z|)}{2}-\frac{ng(|w|)}{2}},
$$
with respect to Lebesgue measure on the complex plane, where the constants $c_k^{(n)}=\int |z|^{2k}e^{-ng(|z|)}dm(z)$ for $k=0,1,\ldots, n-1$. Equivalently, the vector of points of $\X_n^{(g)}$ (in uniform random order) has the density
\begin{align*}
\frac{1}{Z_n^{(g)}}\prod_{i<j}|z_i-z_j|^2e^{-n\sum_{k=1}^{n}g(|z_k|)},
\end{align*}
with respect to the Lebesgue measure on $\C^n$, where $Z_n$ is the normalizing constant, i.e.,
\begin{align*}
Z_n^{(g)}=\int_{\C}\cdots \int_{\C}\prod_{i<j}|z_i-z_j|^2e^{-n\sum_{k=1}^{n}g(|z_k|)}\prod_{k=1}^ndm(z_k)=n!\prod_{k=0}^{n-1}c_k^{(n)}.
\end{align*}

\noindent Note that if $g=r^{\alpha}$ for $\alpha>0$ and if we scaled $\X_n^{(g)}$ by $n^{\frac{1}{\alpha}}$, then we get $\X_n^{(\alpha)}$, i.e., $$\X_n^{(\alpha)}=n^{\frac{1}{\alpha}}.\X_n^{(g)}:=\{n^{\frac{1}{\alpha}}.z\suchthat z\in z\in \X_n^{(g)}\}.$$

\end{enumerate}
In Chapter \ref{ch:finite} and Chapter \ref{ch:infinite}, we calculate the hole probabilities for these determinantal point processes.

\section{A brief survey of hole probabilities}
Here we shall mention few results of hole probabilities for few point processes.
\begin{itemize}
\item Let $\X_f$ be a point process of zeros set of the Gaussian analytic functions 
$$
f(z)=\sum_{k=0}^{\infty}a_k\frac{z^k}{\sqrt{k!}},
$$ 
where $a_k$ are i.i.d. standard complex normal random variables (i.e., each has the density $\frac{1}{\pi}e^{-|z|^2}$ with respect to Lebesgue measure on $\C$). Sodin and Tsirelson \cite{sodin} showed that for $r\ge 1$, 
$$
e^{-Cr^4}\le \P[\X_f(r\D)=0]\le e^{-cr^4},
$$
for some positive constants  $c$ and $C$.

\item  Later Alon Nishry \cite{nishry10} calculated the sharp constant in the exponent of the hole probability for $\X_f$ as $r\to \infty$. In fact he showed that
$$
\lim_{r\to \infty}\frac{1}{r^4}\log \P[\X_f(r\D)=0]=-\frac{3e^2}{4}.
$$
In the same paper, he calculated the asymptotics of hole probabilities for zeros of a wide class of random entire functions. For more results in  this direction, e.g., see \cite{nishry11}, \cite{nishry12}.

\item Let  $\X_{f_L}$ be the zeors set of hyperbolic Gaussian analytic function $f_L$ on unit disk, where 
$$
f_L(z)=\sum_{k=0}^{\infty}\sqrt{\frac{L(L+1)\cdots (L+k-1)}{k!}} a_kz^k,\;\; 0<L<\infty,
$$
and $a_k$ are i.i.d. standard complex normal random variables. Recently, Buckley, Nishry, Peled and Sodin \cite{peled} proved the following results, asymptotics of hole probabilities for $\X_{f_L}$, as $r\to 1^-$
$$
-\log\P[\X_{f_L}(r\D)=0]=\l\{\begin{array}{lcr}
\Theta\l(\frac{1}{(1-r)^L}\log\frac{1}{1-r}\r) & \mbox{if} &\mbox{$0<L<1$}
\\ \Theta\l(\frac{1}{1-r} \r) &\mbox{if} & \mbox{ $L=1$}
\\ \Theta\l(\frac{1}{1-r}\log^2\frac{1}{1-r}\r) & \mbox{if}& \mbox{$0<L<1.$}
\end{array}\r.
$$
Note that there is a transition in the asymptotics of the hole probability according to whether $0 < L < 1$ or $L = 1$ or $L > 1$.


\item Akemann and Strahov \cite{akemannhole} calculated the asymptotics for the hole probabilities for the eigenvalues of the product of finite matrices with i.i.d. standard complex normal entries.  

\end{itemize}

\section{Description of problems and proof techniques}
Consider the infinite Ginibre  ensemble $\mathcal X_{\infty}^{(2)}$ in the complex plane. It is known (see \cite{manjubook}, Proposition 7.2.1) that
\begin{align}\label{eqn:1/4}
\lim_{r\to \infty}\frac{1}{r^4}\log\P[\mathcal X_{\infty}^{(2)}(r\D)=0]=-\frac{1}{4},
\end{align}
where $\D$ is open unit disk. The immediate consequence of \eqref{eqn:1/4},   
for a general open set $U\subset \D$, there exist  constants $C_1$ and $C_2$ (depending on $U$)  such that
$$
C_1\le \liminf_{r\to \infty}\frac{1}{r^4}\log\P[\mathcal X_{\infty}^{(2)}(rU)=0]\le \limsup_{r\to \infty}\frac{1}{r^4}\log\P[\mathcal X_{\infty}^{(2)}(rU)=0]\le C_2,
$$
 as $U$ contains some  disk and is contained in some bigger disk. The natural question is following:

\noindent{\bf Question:} {\it What is the exact value of the limit (if exists)
$$
C_{U}:= \lim_{r\to \infty}\frac{1}{r^4}\log\P[\mathcal X_{\infty}^{(2)}(rU)=0]?
$$}
\noindent Recently, it was shown \cite{hole} that the limit exists, under some mild conditions on $U$. The constant, (say) decay constant, is explicit in terms of minimum energy of complement of the set quadratic external fields. Moreover, it was calculated the constants explicitly for sets  like disk, annulus, ellipse, cardioid, equilateral triangle and half disk.   

In this thesis we prove the similar asymptotic results of hole probabilities for $\X_{\infty}^{(\alpha)}$. The essential ideas of the proofs are borrowed from the paper \cite{hole}. As a consequence of these results, for $\alpha=2$, we get the hole probabilities for the infinite Ginibre ensemble.

The key idea of the proof of \eqref{eqn:1/4} is that the set of absolute values of the points of $\X_{\infty}^{(2)}$  has the same distribution as $\{R_1,R_2,\ldots \},$ where $R_k^2\sim \mbox{Gamma}( k,1)$ and all the $R_k$s are independent. We show  that the set of absolute values of the points of $\X_{\infty}^{(\alpha)}$  has the same distribution as $\{R_1,R_2,\ldots \},$ where $R_k^{\alpha}\sim \mbox{Gamma}(\frac{2}{\alpha}k,1)$ and all the $R_k$s are independent. Using this result we show that 
\begin{align*}
\lim_{r\to \infty}\frac{1}{r^{2\alpha}}\log\P[\mathcal X_{\infty}^{(\alpha)}(r\D)=0]=-\frac{\alpha}{2}\cdot\frac{1}{4},
\end{align*}
for $\alpha>0$. In particular, $\alpha=2$ gives \eqref{eqn:1/4}. By the same idea, we also calculate the decay constant for the annulus.

 The above  idea cannot be  applied for non circular domains. One possible way to calculate the hole probabilities, for general sets, is Fredholm determinant. The hole probability for $\X$, a determinantal point process with kernel $\mathbb K$ and measure $\mu$, in terms of Fredholm determinant is given by
 $$
 \P[\X(B)=0]=1+\sum_{n=1}^{\infty}\frac{(-1)^n}{n!}\int_{B}\cdots\int_{B} \det(\mathbb K(x_i,x_j))_{1\le i,j\le n}\prod_{k=1}^{n}d\mu(x_k),
 $$
 where $B$ is any Borel measurable set in $\C$. For the proof of the above equality and more details of the Fredholm determinant see Lemma 3.2.4 and Section 3.4 in \cite{andersonbook} respectively. But, it is difficult to calculate  hole probabilities using the above formula.

We use the potential theory techniques to calculate the hole probabilities for $\X_{\infty}^{(\alpha)}$ in general domains.  Now we explain the proof techniques for infinite Ginibre ensemble. Since the infinite Ginibre ensemble is the distributional limit of finite Ginibre ensembles,  the hole probabilities for infinite Ginibre ensemble is limit of the hole probabilities for finite Ginibre ensembles. The hole probability for $n$-th Ginibre ensemble, in $\sqrt{n} U$, is given by 
 \begin{align*}
 \P[\X_n^{(2)}(\sqrt{n} U)=0]=\frac{1}{Z_n}\int_{ U^c}\ldots \int_{ U^c}  e^{-n\sum_{k=1}^{n}|z_k|^2}\prod_{i<j}|z_i-z_j|^2\prod_{i=1}^n dm(z_i),
 \end{align*}
where $Z_n=n^{-\frac{n^2}{2}}{\pi^n \prod_{k=1}^{n} k!}$ and $U\subset \D$. 

The circular law \cite{ginibre} tells us that the empirical eigenvalue distribution $\rho_n$  of $\frac{1}{\sqrt{n}}G_n$ converges to the uniform measure on the unit disk $\D$ as $n \to \infty$. So, for  $U \subset \D$, $\P[\X_n^{(2)}(\sqrt{n} U)=0]$ converges to zero as $n \to \infty$. Observe  that $\P[\X_n^{(2)}(\sqrt{n} U)=0]=\P[\rho_n \in \mathcal P (U^c) ]$, where $\mathcal P(E)$ denotes the space of all compactly supported probability measures with support in $E$, closed subset of $\C$. Therefore by  Large deviation principle for the empirical eigenvalue distribution of Ginibre ensemble, proved in \cite{hiai}, we have an upper bound for the limits of the hole probabilities,
 $$
\limsup_{n\to
\infty}\frac{1}{n^2}\log \P[\X_n(\sqrt{n} U)=0]\le-\inf_{\mu\in
\mathcal P (U^c)}R_{\mu}^{(2)}+\frac{3}{4},
$$
 where the rate function $R_{\mu}^{(2)}$ is  the following functional on $\mathcal P (\C)$
\begin{equation*}
R_{\mu}^{(2)}=\iint \log\frac{1}{|z-w|}d\mu(z)d\mu(w)+\int
|z|^2d\mu(z),
\end{equation*}
as the set $\mathcal P (U^c)$ is closed in $\mathcal P (\C)$ with weak topology. No non-trivial lower bound for the hole probabilities  can be deduced from the large deviation principle, as the set $\mathcal P (U^c)$ has empty interior. See that, for $a \in U$ and $\mu \in\mathcal P (U^c)$, $(1-\frac{1}{n})\mu + \frac{1}{n} \delta_{a} \notin \mathcal P (U^c)$ for all $n$ and converges to $\mu$ as $n \to \infty$. Nonetheless, using the properties of balayage measures and Fekete points we have the lower bound, with same quantity, for two class of open sets. 

Using similar idea we calculate the hole probabilities for $\X_{\infty}^{(\alpha)}$ for large class of sets. Similar to circular law, we show that the limiting empirical distribution of points of $n^{-\frac{1}{\alpha}}.\X_n^{(\alpha)}$ (each point is scaled down by $n^{-\frac{1}{\alpha}}$) exists. Moreover we calculate the hole probabilities for the determinantal process $\X_n^{(g)}$, using potential theory.

\section{Plan of the thesis}
In this section we give a chapter wise brief description of this thesis.
\begin{itemize}
\item In Chapter \ref{ch:equilibrium} we give definitions of the equilibrium measure and the minimum energy with external fields and their properties. We derive a formula for the equilibrium measures and for the minimum energies for a certain class of sets, in terms of balayage measures. We give examples of balayage measures for a class of external fields. In particular, we calculate the equilibrium measures and the minimum energies for the complement sets of the disk, annulus, ellipse, cardioid, half disk and equilateral triangle with  the quadratic external fields.  

\item In Chapter \ref{ch:finite} we calculate the asymptotics, as $n\to \infty$, of the hole probabilities for $\X_n^{(g)}$ and $\X_n^{(\alpha)}$ for a large class of sets . It turns out that the decay constants are explicit in terms of the minimum energies of the sets, depending on external fields.  As a corollary of these results, we get the hole probability results for finite Ginibre ensembles, proved in \cite{hole}. 

\item In Chapter \ref{ch:infinite} we derive the asymptotic of  $\P[\X_{\infty}^{(\alpha)}(rU)=0]$ as $r\to \infty$ for a large class of open sets $U$. We derive the decay constants explicitly in terms of the minimum energies of the sets, with the external fields $|z|^{\alpha}$. In particular, $\alpha=2$ gives the hole probability results for infinite Ginibre ensemble, proved in \cite{hole}.

\item In chapter \ref{ch:beta} we calculate the hole probabilities for finite $\beta$-ensembles in the complex plane.


\item In Chapter \ref{ch:fluctuations} we move away from hole probabilities. We consider a family of determinantal point processes $\X_L$ in the unit disk with kernels $\mathbb{K}_L$ with respect to the measures $\mu_L$ for $L>0$, where
$$
\mathbb{K}_L(z,w)=\frac{1}{(1-z\bar{w})^{L+1}}\;\;\mbox{and}\;\;d\mu_L(z)=\frac{L}{\pi}(1-|z|^2)^{L-1}dm(z),
$$
for $z,w\in \mathbb{D}$. We show that the asymptotics of variances of $\X_L(r\D)$,  as $r\to 1^-$, does not change with $L$. We also calculate the variances of linear statistics of $\varphi_p(z)=(1-\frac{|z|^2}{r^2})_+^{\frac{p}{2}}$ for $p>0$ and $0<r<1$. Interestingly, it turns out that there is a transitions in variances at $p=1$ as $r\to 1^-$, does not depend on $L$.

\end{itemize}


\chapter{Equilibrium measures}\label{ch:equilibrium}
In this chapter we calculate the weighted equilibrium measures and the weighted minimum  energies for some sets with certain external fields. The minimum energies will play a crucial role in next two chapters, in calculating the hole probabilities.  We start with basic definitions and facts of classical potential theory from \cite{ransfordbook}, \cite{totikbook}.

\section{Preliminaries}
The support of a positive measure $\mu$ on $\mathbb{C}$, denoted by supp$(\mu)$, consists of all points $z$ such that $\mu(D(z,r))>0$ for every open disk $D(z,r)$ of radius $r>0$ and with center at $z$. The measure $\mu$ is said to be compactly supported if supp$(\mu)$ is compact. Let $\mu$ be a compactly supported probability measure on $\mathbb C$. Then its {\it potential} is the function $p_{\mu}:\mathbb C \to (-\infty, \infty]$ defined by
$$
p_{\mu}(z):=-\int \log |z-w|d\mu(w)\;\;\;\mbox{for all $z\in \mathbb C$}.
$$
Its {\it logarithmic energy}  $I_\mu$ is defined by
$$
I_\mu:=-\iint  \log|z-w|d\mu(z)d\mu(w)=\int p_{\mu}(z)d\mu(z).
$$

A set $E \subset \mathbb{C}$ is said to be polar if $I_{\mu}=\infty$ for all compactly supported probability measures $\mu$ with supp$({\mu}) \subset E$. The {\it capacity} of a subset $E$ of $\mathbb C$ is given by
$$
C(E):=e^{-\inf\{I_{\mu}\;:\;\mu \in \mathcal P(E)\}},
$$
where $\mathcal P(E)$ is the space of all compactly supported probability measure with support in $E$.  A set is polar if and only the capacity is zero. Singleton sets are polar sets and countable union of polar sets  is again a polar set. 
A property is said to  hold {\it quasi-everywhere} (q.e.) on $E\subset \mathbb C$ if it holds everywhere on $E$ except some Borel polar set. Every Borel probability measure with finite logarithmic  energy assigns zero measure to Borel polar sets (see Theorem 3.2.3, \cite{ransfordbook}). So, a property, which holds q.e. on $E$, holds {\it $\mu$-a.e.} on $E$, for every $\mu$ with finite energy. As a corollary, we have that every Borel polar set has Lebesgue measure zero and   a property, which holds q.e. on $E$, holds a.e. on $E$.

 A weight function $w:E\to [0,\infty)$, on a closed subset $E$ of $\mathbb{C}$, is said to be {\it
admissible} if it satisfies the following three conditions:
\begin{enumerate}
\item $w$ is upper semi-continuous,

\item $E_0:=\{z\in E|w(z)>0\}$ has positive capacity,
\item if $E$ is unbounded, then $|z|w(z)\to 0$ as
$|z|\to\infty,z\in E$.
\end{enumerate}

\noindent{\bf Weighted equilibrium measure: }A probability measure, with support in $E$,  which minimizes the following weighted energy 
$$
R_{\mu}=\int p_{\mu}(z)d\mu(z)+2\int Q(z)d\mu(z),
$$ where $w=e^{-Q}$ is an admissible weight function, is called {\it weighted equilibrium measure} for $E$ with external field $Q$. The weighted minimum energy is  $R_{E^c}:=\inf\{R_{\mu}\;:\;{\mu\in \mathcal P(E)}\}$. For simplicity we write {\bf minimum energy} and \textbf{equilibrium measure} instead of weighted minimum energy and weighted equilibrium measure respectively. We have the following fact regarding equilibrium measure.

\begin{fact}\label{ft:characterization}
Let $w=e^{-Q}$ be an admissible weight function on a closed set $E$. Then there exists a unique equilibrium measure $\nu$, for $E$ with external field $Q$. The equilibrium measure $\nu$  has compact support and $R_{\nu}$ is finite (so is $I_{\nu}$). Further, $\nu$ satisfies the following conditions 
\begin{eqnarray}\label{eqn:insupport}
 p_{\nu}(z)+Q(z)=C
\end{eqnarray}
for q.e. $z\in \mbox{supp}(\nu)$ and
\begin{eqnarray}\label{eq:outsidesupport}
 p_{\nu}(z)+Q(z)\ge C
\end{eqnarray}
for q.e. $z\in E$ for some constant $C$. Also, the above conditions uniquely characterize the equilibrium measure, i.e. a probability measure with compact support in $E$ and finite energy, which satisfies the conditions \eqref{eqn:insupport} and \eqref{eq:outsidesupport} for some constant $C$, is the equilibrium measure for $E$ with external field $Q$. 
\end{fact}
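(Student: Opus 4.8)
The plan is to treat this as the classical Frostman-type characterization of the weighted equilibrium measure and to establish its four assertions — existence, uniqueness, the variational (in)equalities \eqref{eqn:insupport}--\eqref{eq:outsidesupport}, and the converse characterization — in that order. For existence I would run the direct method on the functional $\mu \mapsto R_\mu = I_\mu + 2\int Q\, d\mu$ over $\mathcal{P}(E)$. The admissibility conditions are exactly what is needed: condition $(2)$ guarantees that some competitor has finite energy, so that $R_{E^c} := \inf_{\mu \in \mathcal{P}(E)} R_\mu < \infty$, while condition $(3)$, $|z| w(z) \to 0$, forces $Q(z) = -\log w(z)$ to grow faster than $\log|z|$ at infinity, which both bounds $R_\mu$ below and confines any minimizing sequence to a fixed compact set (mass escaping to infinity is penalized). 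Tightness then yields, via Helly selection / Prokhorov, a weakly convergent subsequence $\mu_n \to \nu$, and lower semicontinuity of $\mu \mapsto I_\mu$ together with upper semicontinuity of $Q$ gives $R_\nu \le \liminf R_{\mu_n} = R_{E^c}$, so the infimum is attained; compact support and finiteness of $R_\nu$ (hence $I_\nu$) come along from the confinement.

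For uniqueness and for the converse the single key ingredient is the \emph{energy principle}: for a compactly supported signed measure $\sigma$ with $\sigma(\C) = 0$ and finite energy one has $-\iint \log|z-w|\, d\sigma(z)\, d\sigma(w) \ge 0$, with equality only when $\sigma = 0$. Granting this, take any two finite-energy competitors $\mu_0, \mu_1 \in \mathcal{P}(E)$ and set $\sigma = \mu_1 - \mu_0$ (of total mass $0$); a direct expansion gives $R_{\mu_1} - R_{\mu_0} = -\iint \log|z-w|\, d\sigma\, d\sigma + 2\int (p_{\mu_0} + Q)\, d\sigma$. Strict positivity of the first term makes $R$ strictly convex along the segment $(1-t)\mu_0 + t\mu_1$, which forces the minimizer to be unique.

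The variational conditions I would extract by a first-variation argument: for the minimizer $\nu$ and any finite-energy $\sigma \in \mathcal{P}(E)$, the map $t \mapsto R_{(1-t)\nu + t\sigma}$ is minimized at $t = 0$, and differentiating at $0^+$ (using the same expansion) yields $\int (p_\nu + Q)\, d\sigma \ge \int (p_\nu + Q)\, d\nu =: C$ for every such $\sigma$. Testing against measures supported where $p_\nu + Q < C$, and invoking that any set of positive capacity carries a finite-energy probability measure, shows $\{p_\nu + Q < C\}$ has zero capacity, i.e. \eqref{eq:outsidesupport} holds q.e. on $E$; integrating the reverse inequality against $\nu$ and using $\int(p_\nu+Q)\,d\nu = C$ pins down $p_\nu + Q = C$ $\nu$-a.e., which I would then upgrade to q.e. on $\mbox{supp}(\nu)$. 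The converse is the cleanest part: given any $\mu$ satisfying \eqref{eqn:insupport}--\eqref{eq:outsidesupport} for some $C$, the expansion $R_\sigma - R_\mu = -\iint \log|z-w|\, d(\sigma-\mu)\,d(\sigma-\mu) + 2\int(p_\mu + Q)\,d(\sigma-\mu)$ makes the last integral $\ge 2(C - C) = 0$ (using $\ge C$ q.e. against $\sigma$ and $= C$ against $\mu$, both legitimate because finite-energy measures ignore polar sets), so $R_\sigma \ge R_\mu$ and $\mu$ is the minimizer $\nu$.

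The main obstacle is the energy principle itself, since $-\log|z-w|$ is not positive definite on all of $\mathcal{P}(\C)$ and positivity only holds after restricting to the mass-zero hyperplane; the honest route is Fourier-analytic, using that the distributional Fourier transform of $-\log|\cdot|$ in the plane is a positive multiple of $|\xi|^{-2}$, with the constraint $\hat\sigma(0) = \sigma(\C) = 0$ taming the singularity at the origin. The secondary technical point is the passage from \emph{$\nu$-a.e.} to \emph{q.e.} in \eqref{eqn:insupport}: this rests on the facts (already quoted in the excerpt) that finite-energy measures assign no mass to polar sets and that $p_\nu$ is lower semicontinuous, so the exceptional set can be shown polar rather than merely $\nu$-null. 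Both ingredients are standard in the potential-theory references \cite{ransfordbook}, \cite{totikbook}, so in the thesis it is reasonable to cite them rather than reproduce the arguments.
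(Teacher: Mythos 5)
The paper gives no proof of this fact at all---it simply cites \cite{totikbook} (Chapter I, Theorems 1.3 and 3.3)---and your sketch is a faithful outline of precisely the standard argument proved there: the direct method for existence, the energy principle on mass-zero signed measures for uniqueness and for the converse, and a first-variation argument for the Frostman conditions, so you are taking the same route the paper delegates to its reference. The one slip worth fixing is that in the existence step you invoke \emph{upper} semicontinuity of $Q$: admissibility makes $w$ upper semicontinuous, hence $Q=-\log w$ \emph{lower} semicontinuous, which is what the weak lower semicontinuity of $\mu\mapsto\int Q\,d\mu$ (and hence of $R_\mu$) actually requires.
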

For a proof of this fact see \cite{totikbook} (Chapter I Theorem 1.3 and Theorem 3.3).
The discrete analogue of the above minimization problem of $R_{\mu}$ is the problem of finding the limit of 
$$
\delta_n^{\omega}(E):=\sup_{z_1,z_2,\ldots,z_n\in E}\left\{
\prod_{i<j}|z_i-z_j|\omega(z_i)\omega(z_j)\right\}^{\frac{2}{n(n-1)}},
$$
as $n \to \infty$. A sets $\mathcal{F}_n=\{z_1^*,z_2^*,\ldots,z_n^* \} \subset E$ is said to be an {\it $n$-th weighted Fekete set} for $E$ 
if 
$$
\delta_n^{\omega}(E)=\left\{
\prod_{i<j}|z_i^*-z_j^*|\omega(z_i^*)\omega(z_j^*)\right\}^{\frac{2}{n(n-1)}}.
$$
The points $z_1^*,z_2^*,\ldots,z_n^*$ in a weighted Fekete set $\mathcal{F}_n$ are called $n$-th  weighted Fekete points.  It is known that the sequence $\{\delta_n^{\omega}(E)\}_{n=2}^{\infty}$ decreases to $e^{-R_{\nu}}$, where $\nu$ is the weighted equilibrium measure, i.e. 
\begin{eqnarray}\label{eqn:limit}
\lim_{n \to \infty} \delta_n^{\omega}(E)= e^{-R_{\nu}}=e^{-\inf_{\mu\in \mathcal P(E)}R_{\mu}}.
\end{eqnarray}
Moreover, the uniform probability measures on $n$-th weighted Fekete sets converge weakly to equilibrium measure $\nu$, i.e.
$$
\lim_{n \to \infty} {\nu}_{\mathcal{F}_n}= {\nu},
$$
where ${\nu}_{\mathcal{F}_n}$ is uniform measure on ${\mathcal{F}_n}$. For the proofs of these facts, see \cite{totikbook}, Chapter III Theorem 1.1 and Theorem 1.3. The following fact  (an application of Theorem 4.7 in Chapter II, \cite{totikbook},  to bounded open sets) is about  the existence and uniqueness of the balayage measure.  
\begin{fact}\label{ft:balayage}
Let $U$ be an bounded open subset of $\C$ and $\mu$ be a finite Borel measure on $U$ (i.e., $\mu(U^c)=0$). Then there exists a unique measure $\hat \mu $ on $\partial U$ such that $\hat \mu(\partial U)=\mu(U)$, $\hat \mu(B)=0$ for every Borel polar set $B\subset \C$ and $p_{\hat \mu(z)}=p_{\mu}(z)$ for q.e. $z\in U^c$. $\hat \mu$ is said to be the {\bf balayage measure} associated with $\mu$ on $U$.  
\end{fact}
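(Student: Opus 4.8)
The plan is to construct $\hat\mu$ by sweeping one point mass at a time and superposing, and then to establish uniqueness through the positive-definiteness of the logarithmic energy. Since $U$ is bounded the classical Dirichlet problem is solvable on $U$, which makes harmonic measure available as the basic tool. I would first treat a single point mass: for $z\in U$ let $\omega_z$ denote the harmonic measure of $U$ seen from $z$, a probability measure carried by $\partial U$, and observe that $\omega_z$ is exactly the balayage of $\delta_z$. Indeed, if $w_0$ lies in the interior of $U^c$ then $\zeta\mapsto -\log|\zeta-w_0|$ is harmonic on $U$ and continuous on $\overline U$, so the mean-value representation of the Dirichlet solution yields
$$
-\log|z-w_0|=\int_{\partial U}-\log|\zeta-w_0|\,d\omega_z(\zeta),
$$
that is $p_{\delta_z}(w_0)=p_{\omega_z}(w_0)$; for $w_0\in\partial U$ the identity persists for q.e.\ $w_0$, the exceptional set being the polar set of irregular boundary points. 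I would also record the two standard facts that the harmonic measure of a bounded open set is a probability measure and assigns zero mass to every Borel polar set.

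Next I superpose. I would set $\hat\mu(A):=\int_U\omega_z(A)\,d\mu(z)$ for Borel $A\subseteq\partial U$. Then $\hat\mu(\partial U)=\int_U\omega_z(\partial U)\,d\mu(z)=\mu(U)$ since each $\omega_z$ is a probability measure, and $\hat\mu$ inherits from the $\omega_z$ the property of charging no polar set. By Tonelli (the kernel $-\log|\zeta-w|$ being bounded on the compact set $\partial U$ for $w$ off $\partial U$),
$$
p_{\hat\mu}(w)=\int_U p_{\omega_z}(w)\,d\mu(z)=\int_U p_{\delta_z}(w)\,d\mu(z)=p_\mu(w)
$$
for every $w$ in the interior of $U^c$, and then for q.e.\ $w\in\partial U$ after discarding the polar set of irregular points. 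This produces a measure with all the required properties.

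For uniqueness, suppose $\hat\mu_1,\hat\mu_2$ both satisfy the conclusion and put $\sigma:=\hat\mu_1-\hat\mu_2$. Then $\sigma$ is supported on $\partial U$, has total mass $0$, charges no polar set, and $p_\sigma=0$ q.e.\ on $U^c\supseteq\partial U=\mathrm{supp}(\sigma)$. Because $\sigma$ charges no polar set this gives $p_\sigma=0$ $\sigma$-a.e., hence the energy $\int p_\sigma\,d\sigma=0$. The logarithmic energy is positive-definite on finite signed measures of total mass zero, so $\sigma=0$, i.e.\ $\hat\mu_1=\hat\mu_2$.

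The main obstacle is the boundary regularity encoded in the ``q.e.'' statements: at irregular points of $\partial U$ the Dirichlet solution need not attain its prescribed boundary value, and it is precisely here that one cannot expect equality of potentials everywhere on $U^c$, only quasi-everywhere. Verifying that the irregular points form a polar set, and that the exceptional sets arising from the point-mass identity can be integrated against $\mu$ without accumulating a non-polar error, is the delicate part. A secondary technical point is the finite-energy hypothesis implicit in the uniqueness step: one either restricts to measures of finite energy (which the balayage construction itself supplies) or replaces the energy argument by the domination principle for potentials.
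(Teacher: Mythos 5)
The paper does not actually prove this statement: it is labelled a \emph{Fact} and is imported wholesale from Saff--Totik (Theorem 4.7 of Chapter II), specialised to bounded open sets. So there is no in-paper argument to compare against; what you have written is a reconstruction of the classical proof. Your route --- sweep a point mass to the harmonic measure $\omega_z$ of $U$ at $z$, verify $p_{\delta_z}=p_{\omega_z}$ off $\overline U$ by representing the harmonic function $\zeta\mapsto-\log|\zeta-w_0|$ via the Dirichlet solution, superpose by $\hat\mu(A)=\int_U\omega_z(A)\,d\mu(z)$, and push the identity to q.e.\ boundary points --- is the standard construction and is sound in outline. Mass preservation and the non-charging of polar sets do come for free from the corresponding properties of harmonic measure, exactly as you say, and the exceptional set is indeed the polar set of irregular boundary points (Kellogg). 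The one step you leave genuinely unproved in the existence half is the q.e.\ identity \emph{on} $\partial U$: the kernel $\zeta\mapsto-\log|\zeta-w|$ is only lower semicontinuous at $\zeta=w$, so the continuous-boundary-data Dirichlet representation does not apply directly and one needs an approximation or lower-envelope argument there; you correctly flag this as the delicate point but do not close it.

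The uniqueness step has a real soft spot as written. The energy argument needs $I(\hat\mu_1),I(\hat\mu_2)<\infty$, and your parenthetical claim that ``the balayage construction itself supplies'' finite energy is not true for a general finite $\mu$: the only a priori bound available is $I(\hat\mu)\le I(\mu)$, which is vacuous when $I(\mu)=\infty$, and if $\mu$ piles mass up against $\partial U$ the swept measure can inherit infinite energy. The robust repair is the alternative you name in passing: since both candidates charge no polar sets and their potentials agree q.e.\ on $U^c$, one extends the equality of potentials into $U$ by harmonicity together with the generalized maximum (domination) principle, and then concludes $\hat\mu_1=\hat\mu_2$ from $\Delta p_{\hat\mu_i}=-2\pi\hat\mu_i$ in the sense of distributions (polar sets being Lebesgue-null). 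If you replace the energy argument by that route, the proof is complete and matches the one in Saff--Totik that the thesis cites.
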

\noindent We use the following well known fact, known as Jensen's formula.
\begin{fact}\label{ft:fundamental}
For each $r>0$,
$$
\frac{1}{2\pi}\int_{0}^{2\pi}\log\frac{1}{|z-re^{i\theta}|}d\theta=\left\{\begin{array}{lr}\log
\frac{1}{r} & \mbox{ if } |z|\le r\\\log \frac{1}{|z|} & \mbox{ if
} |z|> r\end{array}\right..
$$
\end{fact}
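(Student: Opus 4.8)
The plan is to recognize this as the statement that the logarithmic potential of the uniform probability measure on the circle $\{|w|=r\}$ equals $\log(1/\max(|z|,r))$. Since $\log\frac{1}{|z-re^{i\theta}|}=-\log|z-re^{i\theta}|$, it suffices to show that $\frac{1}{2\pi}\int_0^{2\pi}\log|z-re^{i\theta}|\,d\theta=\log\max(|z|,r)$, and I would split into three cases according to the position of $z$ relative to the circle.

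First, for $|z|<r$, I would factor $z-re^{i\theta}=-re^{i\theta}(1-(z/r)e^{-i\theta})$, so that $\log|z-re^{i\theta}|=\log r+\log|1-(z/r)e^{-i\theta}|$. As $\theta$ runs over $[0,2\pi)$ the point $\zeta=(z/r)e^{-i\theta}$ traces the circle of radius $|z|/r<1$ about the origin, and $\log|1-\zeta|$ is the real part of the single-valued analytic function $\log(1-\zeta)$ on the simply connected unit disk (where $1-\zeta\neq 0$), hence harmonic there. By the mean value property of harmonic functions, the average of $\log|1-\zeta|$ over that circle equals its value at the centre $\zeta=0$, namely $0$. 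This leaves $\frac{1}{2\pi}\int_0^{2\pi}\log|z-re^{i\theta}|\,d\theta=\log r$, as required. Second, for $|z|>r$, I would instead factor $z-re^{i\theta}=z(1-(r/z)e^{i\theta})$ and run the identical mean value argument, now with $|r/z|<1$, to obtain the average $\log|z|$.

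The boundary case $|z|=r$ is where I expect the only real difficulty, since there the integrand has a logarithmic singularity at the single value of $\theta$ with $re^{i\theta}=z$, and the harmonic mean value argument no longer applies because the relevant point $\zeta=1$ then lies on the unit circle rather than strictly inside it. Here I would write $z=re^{i\alpha}$, reduce to $\frac{1}{2\pi}\int_0^{2\pi}\log|1-e^{i\phi}|\,d\phi$, and evaluate this classical integral directly using $|1-e^{i\phi}|=2|\sin(\phi/2)|$ together with $\int_0^{2\pi}\log|\sin(\phi/2)|\,d\phi=-2\pi\log2$; the singularity is integrable, so the integral is well defined and equals $0$, giving the value $\log r$ consistent with the $|z|\le r$ branch.

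Alternatively, and perhaps more cleanly, one can sidestep the explicit trigonometric integral by a continuity argument: the logarithmic potential of the uniform measure on the circle is continuous across the circle (a standard regularity fact for potentials of finite measures with no point masses on the boundary), so the boundary value is already forced by the common limit $\log r$ of the interior and exterior expressions computed above. Either route completes all three cases and yields the stated formula.
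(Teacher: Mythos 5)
Your proof is correct and follows essentially the same route as the paper: factoring out the dominant term, applying the mean value property of the harmonic function $\log|1-\zeta|$ on the unit disk, and handling the boundary case $|z|=r$ by direct evaluation of $\int_0^{2\pi}\log|1-e^{i\phi}|\,d\phi=0$ (the paper states this step as "by direct calculation"). Your additional continuity-of-potential argument for the boundary case is a valid alternative but not needed.
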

\noindent To see this, note that $\log |1-z|$ is harmonic on $\D$, by mean value property we have 
$$
\frac{1}{2\pi}\int_{0}^{2\pi}\log|1-re^{i\theta}|d\theta=0 \;\;\mbox{for $r<1$}.
$$
This equality is  true also for $r=1$, by direct calculation. This implies that $\frac{1}{2\pi}\int_{0}^{2\pi}\log|z-re^{i\theta}|d\theta=\log(\max\{|z|,r\})$.

\section{Equilibrium measure in general settings}
In this section we derive the weighted equilibrium measures and the minimum energies of certain sets, for a class of external fields. Let $Q(z)=\frac{g(|z|)}{2}$, where the function $g:[0,\infty)\to [0,\infty)$ satisfies the following conditions:
\begin{enumerate}
\item $g(r)$ is increasing function in $r$ such that $re^{-\frac{g(r)}{2}}\to 0$ as $r\to \infty$.

\item $g$ is a twice differentiable function on $(0,\infty)$.

\item $\lim\limits_{r\to 0^+}rg'(r)=0$ and $rg'(r)$ is increasing on $(0,\infty)$.
\end{enumerate}
Observe that, $g(r)=r^{\alpha}$ satisfies the above conditions for $\alpha>0$. Clearly, $w(z)=e^{-\frac{g(|z|)}{2}}$ is an admissible weight function on such $E$. In this section we assume  that $g$ satisfies above three conditions.

Through out this thesis, $T$ is the solution of $tg'(t)=2$ (i.e., $Tg'(T)=2$) and $D(0,T)$ is the disk of radius $T$ with center at origin. The third condition on $g$ implies that $T$ is unique. We shall use following notations:
\begin{align*}
&R_U^{(g)}=\inf\{R_{\mu}^{(g)}\suchthat \mu \in \mathcal P(U^c)\}\;\;\mbox{ where $R_{\mu}^{(g)}=\iint\log \frac{1}{|z-w|}d\mu(z)d\mu(w)+\int g(|z|)d\mu(z)$};
\\&R_U^{(\alpha)}=\inf\{R_{\mu}^{(\alpha)}\suchthat \mu \in \mathcal P(U^c)\}\;\;\mbox{ where $R_{\mu}^{(\alpha)}=\iint\log \frac{1}{|z-w|}d\mu(z)d\mu(w)+\int |z|^{\alpha}d\mu(z)$}.
\end{align*}
Note that $R_U^{(\alpha)}$ is a slight abuse of the notation $R_U^{(g)}$. First we calculate the equilibrium measure for $E=\C$, then for $E=\C\backslash U$ for certain class of open sets $U\subset D(0,T)$ with the external fields $\frac{g(|z|)}{2}$. 
\begin{theorem}\label{thm:diskgeneral}
 Suppose $E=\mathbb C$ and $Q(z)=\frac{g(|z|)}{2}$ (external fields) on $\mathbb C$.
Then the equilibrium measure $\mu$, set $z=re^{i\theta}$, is given by
$$
d\mu(z)=\left\{\begin{array}{lr} \frac{1}{4\pi}[g''(r)+\frac{1}{r}g'(r)]dm(z)& \mbox{if  }
|z|\le T\\0& \mbox{otherwise}
\end{array}\right.,
$$
where $T>0$ such that $Tg'(T)=2$. The minimum energy is 
$$
R_{\emptyset}^{(g)}=\log\frac{1}{T}+g(T)-\frac{1}{4}\int_{0}^T r(g'(r))^2dr.
$$
\end{theorem}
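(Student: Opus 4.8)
The plan is to invoke the uniqueness characterization of Fact \ref{ft:characterization}: it suffices to exhibit the candidate $\mu$ as a compactly supported probability measure of finite energy for which $p_{\mu}(z)+Q(z)$ equals a constant $C$ on $\{|z|\le T\}$ and is $\ge C$ off this disk. Then $\mu$ is forced to be the equilibrium measure and $R_{\emptyset}^{(g)}=R_{\mu}^{(g)}$, after which the energy formula follows by a direct computation. The recurring simplification is that $\rho g''(\rho)+g'(\rho)=\frac{d}{d\rho}[\rho g'(\rho)]$; I write $h(\rho):=\rho g'(\rho)$, so $h(0)=0$ by condition (3) on $g$ and $h(T)=Tg'(T)=2$ by definition of $T$.

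First I would check the total mass. With $z=re^{i\theta}$ and $dm(z)=r\,dr\,d\theta$, the candidate density integrates to $\tfrac12\int_0^T \frac{d}{dr}[rg'(r)]\,dr=\tfrac12\big(h(T)-h(0)\big)=1$. Next I compute the potential. Since $\mu$ is rotation invariant, integrating first in the angular variable and applying Jensen's formula (Fact \ref{ft:fundamental}) collapses the planar integral to
$$
p_{\mu}(z)=-\tfrac12\int_0^T \log\big(\max\{|z|,\rho\}\big)\,h'(\rho)\,d\rho.
$$
For $|z|=s\le T$ I would split the integral at $\rho=s$ and integrate by parts (using $h(0)=0$, $h(T)=2$), obtaining $p_{\mu}(z)=-\log T+\tfrac12 g(T)-\tfrac12 g(s)$, so that $p_{\mu}+Q\equiv C:=\log\tfrac1T+\tfrac12 g(T)$ on the disk. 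For $s>T$ the same formula gives $p_{\mu}(z)=-\log s$, hence $p_{\mu}+Q=-\log s+\tfrac12 g(s)$.

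To confirm the exterior inequality, set $\psi(s)=-\log s+\tfrac12 g(s)$, so $\psi'(s)=\tfrac12 g'(s)-\tfrac1s$. This vanishes at $s=T$ (where $g'(T)=2/T$) and is positive for $s>T$, because the monotonicity of $rg'(r)$ gives $sg'(s)>Tg'(T)=2$, i.e. $g'(s)>2/s$; therefore $\psi(s)\ge\psi(T)=C$ for $s>T$. This is exactly where hypothesis (3) on $g$ is used, and it is the only genuinely substantive step — everything else is a routine reduction via Jensen's formula plus integration by parts. Both variational conditions are thus verified, and the explicit finiteness of the computed quantities shows $R_{\mu}^{(g)}$ (hence $I_{\mu}$) is finite, so Fact \ref{ft:characterization} applies.

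Finally, for the energy I would use that on the support $p_{\mu}=C-\tfrac12 g$, so $\int p_{\mu}\,d\mu=C-\tfrac12\int g\,d\mu$ and therefore $R_{\mu}^{(g)}=\int p_{\mu}\,d\mu+\int g\,d\mu=C+\tfrac12\int g\,d\mu$. A last integration by parts gives
$$
\int g\,d\mu=\tfrac12\int_0^T g(\rho)\,h'(\rho)\,d\rho=g(T)-\tfrac12\int_0^T \rho\,(g'(\rho))^2\,d\rho,
$$
and substituting $C=\log\tfrac1T+\tfrac12 g(T)$ yields
$$
R_{\emptyset}^{(g)}=\log\tfrac1T+g(T)-\tfrac14\int_0^T r\,(g'(r))^2\,dr,
$$
as claimed.
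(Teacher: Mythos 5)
Your proposal is correct and follows essentially the same route as the paper's proof: verify total mass via $Tg'(T)=2$, compute the potential by Jensen's formula to check the two variational conditions of Fact \ref{ft:characterization} (with the exterior inequality coming from the monotonicity of $rg'(r)$), and then obtain the energy by integration by parts. The computations match the paper's line by line.
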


\noindent Before proving the theorem we present two examples.
\begin{example}
\begin{enumerate}

\item If $g(r)=r^{\alpha}$, then $T=(\frac{2}{\alpha})^{\frac{1}{\alpha}}$. The equilibrium measure is $d\mu(z)=\frac{\alpha^2}{4\pi}r^{\alpha-2}dm(z)$ on $D(0,(\frac{2}{\alpha})^{\frac{1}{\alpha}})$ and the minimum energy is $R_{\emptyset}^{(\alpha)}=\frac{3}{4}\cdot\frac{2}{\alpha}-\frac{1}{\alpha}\log\frac{2}{\alpha}$.

\item In particular when $g(r)=r^2$, i.e., $\alpha=2$ then $T=1$. The equilibrium measure $\mu$ is uniform measure on $\D$, i.e., $d\mu(z)=\frac{1}{\pi}dm(z)$ on $\D$ and the minimum energy is $R_{\emptyset}^{(2)}=\frac{3}{4}$. Proof of this particular case can be found in \cite{hole}.

\end{enumerate}
\end{example}

 \begin{proof}[Proof of Theorem \ref{thm:diskgeneral}]
 Let $d\mu(z)=\frac{1}{4\pi}[g''(r)+\frac{1}{r}g'(r)]dm(z)$ when $z\in D(0,T)$ and zero other wise.  The condition $Tg'(T)=2$ implies that $\mu$ is a probability measure on $D(0,T)$. Indeed, 
 \begin{align*}
 \frac{1}{4\pi}\int_{0}^T\int_0^{2\pi}[g''(r)+\frac{1}{r}g'(r)]rdrd\theta=\frac{1}{2}\int_{0}^T[rg''(r)+g'(r)]dr=\frac{1}{2}Tg'(T)=1.
 \end{align*}
 We show that the measure $\mu$ satisfies the conditions \eqref{eqn:insupport} and \eqref{eq:outsidesupport}. Hence by Fact \ref{ft:characterization} we conclude that $\mu$ is the equilibrium measure.

  By Fact \ref{ft:fundamental}, for $|z|\le T$, we have
\begin{align*}
p_{\mu}(z)&=\frac{1}{4\pi}\int_{0}^T\int_0^{2\pi}\log\frac{1}{|z-re^{i\theta}|}.(g''(r)+\frac{1}{r}g'(r))rdrd\theta\nonumber
\\&=\frac{1}{2}\left[\int_0^{|z|}\log\frac{1}{|z|}.(rg''(r)+g'(r))dr+\int_{|z|}^T\log\frac{1}{r}.(rg''(r)+g'(r))dr\right]\nonumber
\\&=\frac{1}{2}\left[2\log\frac{1}{T}+g(T)-g(|z|)\right],
\end{align*}
the last equality follows from the facts that $\lim_{r\to 0^+}rg'(r)=0$ and $Tg'(T)=2$. Therefore 
\begin{align}\label{eqn:disk1}
p_{\mu}(z)+\frac{g(|z|)}{2}=\frac{1}{2}\left[2\log\frac{1}{T}+g(T)\right] \;\;\mbox{for $|z|\le T$}.
\end{align}
Hence $\mu$ satisfies the condition \eqref{eqn:insupport}. On other hand, for $|z|>T$
\begin{align*}
p_{\mu}(z)&=\frac{1}{4\pi}\int_{0}^T\int_0^{2\pi}\log\frac{1}{|z-re^{i\theta}|}.(g''(r)+\frac{1}{r}g'(r))rdrd\theta
\\&=\frac{1}{2}\left[\int_0^{T}\log\frac{1}{|z|}.(rg''(r)+g'(r))dr\right]\;\; \mbox{ (by Fact \ref{ft:fundamental})}
\\&=\log\frac{1}{|z|},
\end{align*}
we get last equality by using the facts $\lim_{r\to 0^+}rg'(r)=0$ and $Tg'(T)=2$. The function $f(r)=\log\frac{1}{r}+\frac{g(r)}{2}$ is increasing function on $[T,\infty)$. Indeed, $f'(r)=-\frac{1}{r}+\frac{g'(r)}{2}\ge 0$ for $r\ge T$, as $rg'(r)$ is increasing. Therefore 
\begin{align}\label{eqn:disk2}
p_{\mu}(z)+\frac{g(|z|)}{2}=\log\frac{1}{|z|}+\frac{g(|z|)}{2}\ge \frac{1}{2}\left[2\log\frac{1}{T}+g(T)\right]\;\;\mbox{for $|z|> T$}.
\end{align}
Hence $\mu$ satisfies the condition \eqref{eq:outsidesupport}. Therefore $\mu$ is the equilibrium measure.

\noindent {\bf Value of $R_{\emptyset}^{(g)}$: } We have
\begin{align*}
R_{\emptyset}^{(g)}&=\int p_{\mu}(z)d\mu(z)+\int g(|z|)d\mu(z)
\\&=\frac{1}{2}\left[2\log\frac{1}{T}+g(T)\right]+\frac{1}{2}\int g(|z|)d\mu(z) \;\;\mbox{(by \eqref{eqn:disk1})}
\\&=\log\frac{1}{T}+\frac{1}{2}g(T)+\frac{1}{2}\int_0^{T}\int_0^{2\pi}g(r).\frac{1}{4\pi}(rg''(r)+g'(r))dr d\theta
\\&=\log\frac{1}{T}+g(T)-\frac{1}{4}\int_0^{T}r(g'(r))^2dr,
\end{align*}
by integration by parts and using $\lim_{r\to 0^+}rg'(r)=0$ and $Tg'(T)=2$.
\end{proof}

 Next theorem gives the equilibrium measures and the minimum energies for complements of open subsets of $D(0,T)$. 

\begin{theorem}\label{thm:generalsetting}
Let $E=\C\backslash U=U^c$, where $U \subset D(0,T)$ is an open set.  Then the equilibrium measure for $U^c$, with the external field $\frac{g(|z|)}{2}$, is
$\nu=\mu_1+\nu_2$ and
\begin{align}\label{eqn:genrobinconstant}
R_U^{(g)}=R_{\emptyset}^{(g)}+\frac{1}{2}\left[\int_{\partial U}g(|z|)d\nu_2(z)-\int_{U}g(|z|)d\mu_2(z)\right],
\end{align}
where $\mu_1$ and  $\mu_2$ are  restrictions of the measure $\mu$, as in Theorem \ref{thm:diskgeneral}, on to the sets  $D(0,T)\backslash U $ and $ U$ respectively, i.e.,
\begin{eqnarray*}
d\mu_1(z)&=&\left\{\begin{array}{lr}
\frac{1}{4\pi}(g''(r)+\frac{1}{r}g'(r))dm(z)& \mbox{  if  } z\in D(0,T)\backslash U\\0& \mbox{otherwise}
\end{array}\right.
\\d\mu_2(z)&=&\left\{\begin{array}{lr}
\frac{1}{4\pi}(g''(r)+\frac{1}{r}g'(r))dm(z) & \mbox{  if  } z\in  {U}\\0& \mbox{otherwise}
\end{array}\right.
\end{eqnarray*}
and  $\nu_2$ is the balayage measure on $\partial U$ with respect to the measure $\mu_2$.
\end{theorem}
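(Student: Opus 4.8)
The plan is to show that the candidate $\nu = \mu_1 + \nu_2$ satisfies the two conditions \eqref{eqn:insupport} and \eqref{eq:outsidesupport} that characterize the equilibrium measure in Fact \ref{ft:characterization}, and then to read off $R_U^{(g)}$ by substituting the resulting value of the potential into the energy functional. First I would record that $\nu$ is a probability measure supported in $U^c$. Since the measure $\mu$ of Theorem \ref{thm:diskgeneral} is absolutely continuous on $D(0,T)$ (so the circle $|z|=T$ is $\mu$-null) and $U \subset D(0,T)$, we have $\mu = \mu_1 + \mu_2$. Fact \ref{ft:balayage} gives $\nu_2(\partial U) = \mu_2(U)$, whence $\nu(\C) = \mu_1(\C) + \nu_2(\partial U) = \mu(D(0,T)\sm U) + \mu(U) = 1$. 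Moreover $\supp(\mu_1) \subset \overline{D(0,T)}\sm U$ and $\supp(\nu_2)\subset \partial U \subset \overline{U}\subset \overline{D(0,T)}$, so $\supp(\nu)\subset U^c \cap \{|z|\le T\}$.

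The heart of the argument is the balayage identity. Fact \ref{ft:balayage} gives $p_{\nu_2}(z) = p_{\mu_2}(z)$ for q.e.\ $z\in U^c$, and since potentials add this yields
\begin{align*}
p_\nu(z) = p_{\mu_1}(z) + p_{\nu_2}(z) = p_{\mu_1}(z) + p_{\mu_2}(z) = p_\mu(z) \quad \text{for q.e. } z\in U^c.
\end{align*}
Now I invoke Theorem \ref{thm:diskgeneral}: by \eqref{eqn:disk1}, $p_\mu(z) + \tfrac{g(|z|)}{2} = C$ for $|z|\le T$, with $C = \log\tfrac1T + \tfrac12 g(T)$, and by \eqref{eqn:disk2}, $p_\mu(z) + \tfrac{g(|z|)}{2} \ge C$ for $|z|>T$. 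Transporting these to $\nu$ via the displayed identity, for q.e.\ $z\in U^c$ we get $p_\nu(z) + \tfrac{g(|z|)}{2} \ge C$, which is \eqref{eq:outsidesupport}, with equality whenever $|z|\le T$. Because $\supp(\nu) \subset U^c \cap \{|z|\le T\}$, equality holds q.e.\ on $\supp(\nu)$, giving \eqref{eqn:insupport} with the \emph{same} constant $C$. By the uniqueness clause of Fact \ref{ft:characterization}, $\nu$ is the equilibrium measure for $U^c$, so $R_U^{(g)} = R_\nu^{(g)}$.

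For the minimum energy, I would use that the finite-energy measure $\nu$ charges no polar set, so $p_\nu(z) = C - \tfrac{g(|z|)}{2}$ holds $\nu$-a.e. Substituting into $R_\nu^{(g)} = \int p_\nu\,d\nu + \int g\,d\nu$ gives $R_U^{(g)} = C + \tfrac12\int g(|z|)\,d\nu$. The identical computation applied to $\mu$ (whose support is $\overline{D(0,T)}$, where $p_\mu + \tfrac{g}{2}=C$) gives $R_\emptyset^{(g)} = C + \tfrac12\int g(|z|)\,d\mu$. Subtracting, and using $\nu = \mu_1 + \nu_2$ together with $\mu = \mu_1 + \mu_2$, the common terms $C$ and $\tfrac12\int g\,d\mu_1$ cancel, leaving exactly \eqref{eqn:genrobinconstant}.

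I expect the only genuinely delicate point to be the bookkeeping of polar exceptional sets: the balayage identity and the equilibrium conditions hold only quasi-everywhere, so I must use that $\nu$ and $\mu$ have finite energy to upgrade ``q.e.'' to ``$\nu$-a.e.'' before integrating against $\nu$, and I must verify that $\partial U$ and $\supp(\mu_1)$ lie in $\{|z|\le T\}$ so that the inequality in \eqref{eqn:disk2} never spoils the equality required on $\supp(\nu)$. Everything else is a direct transport of the already-established properties of the full-plane equilibrium measure $\mu$.
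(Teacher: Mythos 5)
Your proposal is correct and follows essentially the same route as the paper: decompose $\mu=\mu_1+\mu_2$, replace $\mu_2$ by its balayage $\nu_2$, use $p_{\nu}=p_{\mu}$ q.e.\ on $U^c$ to transport \eqref{eqn:disk1}--\eqref{eqn:disk2} into the characterizing conditions of Fact \ref{ft:characterization}, and then compute $R_{\nu}^{(g)}=C+\tfrac12\int g\,d\nu$ and compare with $R_{\emptyset}^{(g)}=C+\tfrac12\int g\,d\mu$. Your attention to the q.e.\ bookkeeping and to $\nu$ charging no polar sets matches the paper's use of the same facts.
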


\begin{remark}\label{re:scale}
\begin{enumerate}
\item If $g(r)=r^{\alpha}$ and $\alpha>0$. Then for $U\subseteq D(0,(\frac{2}{\alpha})^{\frac{1}{\alpha}})$, the constant is 
$$
R_U^{(\alpha)}=\frac{3}{4}\cdot\frac{2}{\alpha}-\frac{1}{\alpha}\log\frac{2}{\alpha}+\frac{1}{2}\left[\int_{\partial U}g(|z|)d\nu_2(z)-\int_{U}g(|z|)d\mu_2(z)\right].
$$

\item If $g(r)=r^{\alpha}$ and $\alpha>0$. Then the constant
$$
R_U^{(\alpha)'}=\frac{1}{2}\left[\int_{\partial U}g(|z|)d\nu_2(z)-\int_{U}g(|z|)d\mu_2(z)\right]
$$ 
satisfies the scaling property $R_{a U}^{(\alpha)'}=a^{2\alpha}R_U^{(\alpha)'}$.

\item We can see Theorem \ref{thm:diskgeneral} as a particular case of Theorem \ref{thm:generalsetting}, when $U=\emptyset$. But we shall use Theorem \ref{thm:diskgeneral} to prove Theorem \ref{thm:generalsetting}.
\end{enumerate}
\end{remark}

Now, we proceed to prove Theorem \ref{thm:generalsetting}.

\begin{proof}[Proof of Theorem \ref{thm:generalsetting}] 
Let $\mu$ be the equilibrium measure for $\C$ with external field $\frac{g(|z|)}{2}$, as in Theorem \ref{thm:diskgeneral}. Let $\mu=\mu_1+\mu_2$, where $\mu_1$ and $\mu_2$ are $\mu$ restricted to $U^c$ and $U$ respectively.  By Fact \ref{ft:balayage}, we know that there exists a measure $\nu_2$ on $\partial U$ such that $\nu_2(\partial U)=\mu_2(U)$, $\nu_2(B)=0$ for every Borel polar set and 
$$
p_{\nu_2}(z)=p_{\mu_2}(z)\;\; \mbox{ q.e. on $U^c$}.
$$ 
Define $\nu=\mu_1+\nu_2$. Then  we have that the support of $\nu$ is $\overline{D(0,T)}\backslash U$ and 
$$
p_{\nu}(z)=p_{\mu_1}(z)+p_{\nu_2}(z)=p_{\mu_1}(z)+p_{\mu_2}(z)=p_{\mu}(z)\;\; \mbox{ q.e. on $U^c$}.
$$
Again  we have \eqref{eqn:disk1} and \eqref{eqn:disk2}, i.e., 
\begin{eqnarray*}
p_{\mu}(z)+\frac{g(|z|)}{2}&=&\frac{1}{2}\left[2\log\frac{1}{T}+g(T)\right]\;\;\mbox{ if  $|z|\le T$}
\\ p_{\mu}(z)+\frac{g(|z|)}{2}&\ge &\frac{1}{2}\left[2\log\frac{1}{T}+g(T)\right]\;\;\mbox{ if $|z|>T$}.
\end{eqnarray*}
This gives us that 
\begin{eqnarray*}
p_{\nu}(z)+\frac{g(|z|)}{2}&=&\frac{1}{2}\left[2\log\frac{1}{T}+g(T)\right]\;\;\mbox{ for q.e. $z\in \mbox{supp}(\nu) $}
\\ p_{\nu}(z)+\frac{g(|z|)}{2}&\ge &\frac{1}{2}\left[2\log\frac{1}{T}+g(T)\right]\;\;\mbox{ for q.e. $z\in  U^c$}.
\end{eqnarray*}
The energy of the measure  $\nu$,
\begin{eqnarray}\label{eqn:genenegry}
I_{\nu}&=&\int p_{\nu}(z)d\nu(z)
=\frac{1}{2}\left[2\log\frac{1}{T}+g(T)\right]-\frac{1}{2}\int g(|z|)d\nu(z),
\end{eqnarray}
is finite. The second equality follows from the fact that  $\nu(B)=0$ for all Borel polar sets $B$. So, $\nu $ has finite energy and satisfies conditions \eqref{eqn:insupport} and \eqref{eq:outsidesupport}. Therefore,  by Fact \ref{ft:characterization}, $\nu$ is the equilibrium measure for $U^c$ with the external field $\frac{g(|z|)}{2}$.

\vspace{.5cm} \noindent{\bf Value of $R_U^{(g)}$ :}  
We have 
$$
R_{\nu}^{(g)}=\int p_{\nu}(z)d\nu(z)+\int g(|z|)d\nu(z)=I_{\nu}+\int g(|z|)d\nu(z).
$$
Therefore, by \eqref{eqn:genenegry},  we have
\begin{eqnarray*}
R_{\nu}^{(g)}&=&\frac{1}{2}\left[2\log\frac{1}{T}+g(T)\right]+\frac{1}{2}\int g(|z|)d\nu(z)
\\&=&\frac{1}{2}\left[2\log\frac{1}{T}+g(T)\right]+\frac{1}{2}\int g(|z|)d\mu_1(z)+\frac{1}{2}\int g(|z|)d\nu_2(z)
\\&=&R_{\emptyset}^{(g)}-\frac{1}{2}\int_U g(|z|)d\mu_2(z)+\frac{1}{2}\int_{\partial U} g(|z|)d\nu_2(z)
\\&=&R_{\emptyset}^{(g)}+\frac{1}{2}\left[\int_{\partial U} g(|z|)d\nu_2(z)-\int_U g(|z|)d\mu_2(z)\right].
\end{eqnarray*}
The result follows from the fact that $R_U^{(g)}=R_{\nu}^{(g)}$.
\end{proof}

\begin{remark}\label{relations}
Let $\nu_2$ and $\mu_2$ be as in the above proof i.e. $\nu_2$ is the balayage measure associated with $\mu_2$.  We have $p_{\nu_2}(z)=p_{\mu_2}(z)$ for q.e.  $z\in { U}^c$.  As the logarithmic potential of a measure is harmonic outside its support, $p_{\nu_2}(z)=p_{\mu_2}(z)$ holds for every $z \in {\bar U}^c$. Outside $D(0,T)$,  $p_{\nu_2}(z)$ and $p_{\mu_2}(z)$ are real parts of the analytic functions $ -\int_{\partial U}\log{(z-w)}d\nu_2(w)$ and $ -\int_{\partial U}\log{(z-w)}d\mu_2(w)$, respectively. 
So  there exists a constant $c$ such that for all $|z|>T$,
\begin{eqnarray}\label{eqn:moment}
\int_{\partial U}\log{(z-w)}d\nu_2(w)&=&\int_{ U}\log{(z-w)}d\mu_2(w) +c,\nonumber\\
 \Leftrightarrow \int_{\partial U}[\log{z}+\sum_{n=1}^{\infty}
\frac{w^n}{nz^n}]d\nu_2(w)&=&\int_{ U}[\log{z}+\sum_{n=1}^\infty
\frac{w^n}{nz^n}]d\mu_2(w)+c\nonumber\\
\Leftrightarrow \int_{\partial U}w^n d\nu_2(w)&=&\int_{ U}w^nd\mu_2(w), \forall
n\geqslant0 \mbox{  and $c=0$}
\end{eqnarray}
To see the converse of the above, suppose $\nu_2$ is a measure on $\partial U$ which satisfies  the relations \eqref{eqn:moment}, then $p_{\nu_2}(z)=p_{\mu_2}(z)$ for every   $z\in { \bar U}^c.$ If $\partial U$ is a piecewise smooth curve and $\nu_2$ has  density with respect to arc-length on $\partial U$, then $p_{\nu_2}(z)$ is continuous  at all the continuity points of the density of $\nu_2$ (\cite{totikbook} Chapter II Theorem 1.5). In this case $p_{\mu_2}(z)$ is also continuous on $\partial U$. So if the density of $\nu_2$ is piecewise continuous   on $\partial U$, we get that 
$p_{\nu_2}(z)=p_{\mu_2}(z)$ for q.e.   $z\in {  U}^c.$  Therefore when $\partial U$ is piecewise smooth curve,  a measure $\nu_2$ on $\partial U$ which has piecewise continuous density with respect to arclength and satisfies relations  \eqref{eqn:moment} is the balayage measure on $\partial U$.  
\end{remark}

\subsection{Examples of balayage measures}
In this section we calculate the balayage measures for disk and annulus with the external field $\frac{g(|z|)}{2}$.
\begin{example}\label{ex:gdisk}
Let $U=D(0,a)$, disk of radius $a<T$ centered at origin. Then the balayage measure on $\partial U$ with respect to the measure $\mu\big |_U$, where $\mu$ as in Theorem \ref{thm:diskgeneral}, is
\begin{align*}
d\nu_2(z)&=\left\{\begin{array}{lr}
\frac{1}{4\pi}ag'(a)d\theta & \mbox{if } |z|=a,
\\0 & \mbox{otherwise}. 
\end{array}\right.
\end{align*} 
\end{example}

\begin{example}\label{ex:gannulus}
Let $U=\{z\suchthat 0< a<|z|<b<T\}$, annulus with inner radius $a$ and outer radius $b$. Then the balayage measure on $\partial U$ with respect to the measure $\mu\big |_U$, where $\mu$ as in the Theorem \ref{thm:diskgeneral}, is 
\begin{align*}
d\nu_2(z)&=\left\{\begin{array}{lr}
\lambda.\frac{1}{4\pi}(bg'(b)-ag'(a))d\theta & \mbox{if } |z|=a,
\\(1-\lambda).\frac{1}{4\pi}(bg'(b)-ag'(a))d\theta & \mbox{if } |z|=b,
\\0 & \mbox{otherwise},
\end{array}\right.
\end{align*} 
where $\lambda$ is given by
$$
\lambda=\frac{(g(b)-g(a))-ag'(a)\log(b/a)}{(bg'(b)-ag'(a))\log(b/a)}.
$$
\end{example}

\begin{remark}\label{re:constant}
Suppose $g(t)=t^{\alpha}$, for $\alpha>0$. 
\begin{enumerate}
\item If $U=D(0,a)$, where $a\le (\frac{2}{\alpha})^{\frac{1}{\alpha}}$. Then the balyage measure on $\partial U$ and minimum energy are given below:
\begin{align*}
d\nu_2(z)=\left \{\begin{array}{lr}
\frac{\alpha}{4\pi}a^{\alpha}d\theta & \mbox{if } |z|=a,
\\0 & \mbox{otherwise},
\end{array}\r.
\;\;\mbox{ and }\;\;\; R_U^{(\alpha)}-R_{\emptyset}^{(\alpha)}=\frac{\alpha}{2}\cdot\frac{a^{2\alpha}}{4}.
\end{align*}
\item If $U=\{z\suchthat 0< a<|z|<b<(\frac{2}{\alpha})^{\frac{1}{\alpha}}\}$, annulus with inner radius $a$ and outer radius $b$. Then the balayage measure on $\partial U$ is 
\begin{align*}
d\nu_2(z)&=\left\{\begin{array}{lr}
\lambda.\frac{\alpha}{4\pi}(b^{\alpha}-a^{\alpha})d\theta & \mbox{if } |z|=a,
\\(1-\lambda).\frac{\alpha}{4\pi}(b^{\alpha}-a^{\alpha})d\theta & \mbox{if } |z|=b,
\\0 & \mbox{otherwise},
\end{array}\right.
\mbox{ for }\lambda=\frac{(b^{\alpha}-a^{\alpha})-\alpha a^{\alpha}\log(b/a)}{\alpha(b^{\alpha}-a^{\alpha})\log(b/a)}.
\end{align*}
The minimum energy is given by
\begin{align*}
R_U^{(\alpha)}-R_{\emptyset}^{(\alpha)}=\frac{\alpha}{2}\cdot \l(\frac{b^{2\alpha}}{4}-\frac{a^{2\alpha}}{4}-\frac{(b^{\alpha}-a^{\alpha})^2}{2\alpha\log(b/a)}\r).
\end{align*}

\end{enumerate}

\end{remark}

\noindent We show the computations for the Example \ref{ex:gannulus} and we skip the (similar) calculations for Example \ref{ex:gdisk}. 

\begin{proof}[Computations for Example \ref{ex:gannulus}] If $|z|\le a$, 
then by Fact \ref{ft:fundamental} we have 
\begin{align*}
p_{\mu_2}(z)&=\frac{1}{4\pi}\int_a^b\int_0^{2\pi}\log \frac{1}{|z-re^{i\theta}|}.\left[g''(r)+\frac{1}{r}g'(r)\right]rdr\theta
\\&=\frac{1}{2}\int_a^b[rg''(r)+g'(r)].\log\frac{1}{r}.dr
\\&=\frac{1}{2}\left[-bg'(b)\log b+ag'(a)\log a+\int_a^bg'(r)dr\right]
\\&=\frac{1}{2}\left[(g(b)-g(a))-bg'(b)\log b+ag'(a)\log a\right].
\end{align*}
Again for $|z|\le a$, the potential for $\nu_2$ at $z$ is
\begin{align*}
p_{\nu_2}(z)=&\frac{1}{4\pi}\int_{0}^{2\pi}\lambda.(bg'(b)-ag'(a))\log\frac{1}{|z-ae^{i\theta}|}d\theta 
\\& + \frac{1}{4\pi}\int_{0}^{2\pi}(1-\lambda).(bg'(b)-ag'(a))\log\frac{1}{|z-be^{i\theta}|}d\theta
\\=& \frac{\lambda}{2}\cdot\left[bg'(b)-ag'(a)\right]\cdot\log(b/a)-\frac{1}{2}\cdot[bg'(b)-ag'(a)]\cdot\log b,
\end{align*}
last equality follows from the Fact \ref{ft:fundamental}. By equating $p_{\mu_2}(z)=p_{\nu_2}(z)$ for $|z|\le a$, we get 
\begin{align*}
\lambda=\frac{(g(b)-g(a))-ag'(a)\log(b/a)}{(bg'(b)-ag'(a))\log(b/a)}.
\end{align*}
Similarly, it can be shown that $p_{\mu_2}(z)=p_{\nu_2}(z)$ for $|z|\ge b$ for all choice of $\lambda$. Therefore $p_{\mu_2}(z)=p_{\nu_2}(z)$ if $z\in U^c$ for the above particular choice of $\lambda$. Hence the result.  
\end{proof}

\section{Equilibrium measure with quadratic external fields}
In this section we calculate the equilibrium measures  and the minimum energies for some sets with quadratic external  fields. The following theorem has been proved in \cite{hole}, a particular case (for $g(|z|)=|z|^2$) of Theorem \ref{thm:generalsetting}.
\begin{theorem}\label{thm:generalformula}
Let $U$ be an open set such that ${U} \subseteq {\mathbb D}$.  Then the equilibrium measure for $U^c$, under logarithmic potential with quadratic  external field, is
$\nu=\nu_1+\nu_2$ and
\begin{align}\label{eqn:robinconstant}
R_U^{(2)}=\frac{3}{4}+\frac{1}{2}\left[\int_{\partial U}|z|^2d\nu_2(z)-\frac{1}{\pi}\int_{{U}}|z|^2dm(z)\right],
\end{align}
where 
\begin{eqnarray*}
d\nu_1(z)&=&\left\{\begin{array}{lr}
\frac{1}{\pi}dm(z)& \mbox{  if  } z\in{\mathbb D}\backslash U\\0& \mbox{o.w.}
\end{array}\right.
\end{eqnarray*}
and  $\nu_2$ is the balayage measure on $\partial U$ with respect to the measure $\frac{1}{\pi}m\big |_U$. 
\end{theorem}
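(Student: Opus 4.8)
The plan is to obtain Theorem \ref{thm:generalformula} as the specialization $g(r)=r^2$ of Theorem \ref{thm:generalsetting}, so that the only real work is to verify the hypotheses and to evaluate the resulting constants explicitly. First I would check that $g(r)=r^2$ meets the three conditions on $g$ used in this section: it is increasing with $re^{-r^2/2}\to 0$, it is smooth on $(0,\infty)$, and $rg'(r)=2r^2$ satisfies $\lim_{r\to 0^+}rg'(r)=0$ and is increasing. Solving $Tg'(T)=2$ then gives $2T^2=2$, hence $T=1$, so $D(0,T)=\D$ and the ambient disk in Theorem \ref{thm:generalsetting} is exactly the unit disk appearing in the statement.

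Next I would feed $g''\equiv 2$ and $g'(r)=2r$ into Theorem \ref{thm:diskgeneral}. The bracket $g''(r)+\frac{1}{r}g'(r)$ collapses to the constant $4$, so the global equilibrium measure becomes $d\mu(z)=\frac{1}{4\pi}\cdot 4\,dm(z)=\frac{1}{\pi}dm(z)$ on $\D$, i.e.\ the uniform measure. The same substitution in the formula for $R_{\emptyset}^{(g)}$ gives $\log 1 + 1 - \frac{1}{4}\int_0^1 r(2r)^2\,dr = 1-\frac{1}{4}=\frac{3}{4}$, so $R_{\emptyset}^{(2)}=\frac{3}{4}$. These two computations are exactly the content recorded in the second item of the Example following Theorem \ref{thm:diskgeneral}.

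With these values in hand I would simply invoke Theorem \ref{thm:generalsetting}. Its restricted measures $\mu_1$ and $\mu_2$ specialize to $\frac{1}{\pi}m$ restricted to $\D\backslash U$ and to $U$; thus $\mu_1$ is precisely the measure called $\nu_1$ in the present statement, and the balayage measure $\nu_2$ of $\mu_2=\frac{1}{\pi}m\big|_U$ on $\partial U$ (whose existence and uniqueness are Fact \ref{ft:balayage}) is exactly the $\nu_2$ named here. Hence $\nu=\nu_1+\nu_2$ is the equilibrium measure for $U^c$. Finally, inserting $R_{\emptyset}^{(g)}=\frac{3}{4}$, $g(|z|)=|z|^2$, and $d\mu_2=\frac{1}{\pi}dm$ into \eqref{eqn:genrobinconstant} turns that identity verbatim into \eqref{eqn:robinconstant}.

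There is no genuine obstacle in this argument: the statement is a corollary, and its whole substance is the bookkeeping that $g(r)=r^2$ forces $T=1$, the uniform density $\frac{1}{\pi}$, and Robin constant $\frac{3}{4}$. If one instead wanted a self-contained proof that did not cite Theorem \ref{thm:generalsetting}, the only step needing care would be checking that $\nu=\nu_1+\nu_2$ satisfies the Frostman conditions \eqref{eqn:insupport}--\eqref{eq:outsidesupport} of Fact \ref{ft:characterization}; this follows because balayage preserves the potential of $\mu_2$ quasi-everywhere on $U^c$ while leaving $\nu_1=\mu_1$ untouched, so $p_\nu$ agrees with $p_\mu$ q.e.\ on $U^c$ and the inequalities transfer directly.
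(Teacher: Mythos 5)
Your proposal is correct and follows exactly the route the paper takes: the text introduces Theorem \ref{thm:generalformula} precisely as the special case $g(|z|)=|z|^2$ of Theorem \ref{thm:generalsetting}, and your bookkeeping ($T=1$, density $\frac{1}{\pi}$, $R_{\emptyset}^{(2)}=\frac{3}{4}$) is the whole content of that specialization. Nothing further is needed.
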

Next subsection we have calculated the equilibrium measures $\nu  $ and the constant  $R_{\nu}$ for some particular sets.
\subsection{Table of examples}\label{results}
 Suppose $\nu=\nu_1+\nu_2$ is the equilibrium measure for $\bar{\mathbb D}\backslash U$ as in Theorem \ref{thm:generalformula}. In this section we write $R_U$ and $R_U'$ instead of $R_U^{(2)}$ and $R_U^{(2)'}$ respectively. Then $R_U=\frac{3}{4}+R_U'$.
The balayage measure $\nu_2$ and $R_U'$, for some particular
open sets $U$, are given in the following table.

\begin{center}
\footnotesize{\begin{tabular}{ | m{3.1cm} | m{6.4cm}| m{2cm} | }
\hline
\begin{center}{\bf $U$}\end{center} & \begin{center}{\bf $\nu_2$}\end{center} & \begin{center}{\bf $R_{U}'$}\end{center} \\
\hline
$\{z:|z|<a\}$ (disk).&
$d\nu_2(z)=\left\{\begin{array}{lr}\frac{a^2}{2\pi}d\theta &
\mbox{if $z=ae^{i\theta}$}\\0& \mbox{o.w.}\end{array}\right.$
& \begin{center}$\frac{a^4}{4}$\end{center} \\
\hline
$\{z:|z-a_0|<a\}$ \mbox{for fixed $a_0\in \mathbb D .$}
&$d\nu_2(z)=\left\{\begin{array}{lr}\frac{a^2}{2\pi}d\theta &
\mbox{if $z=a_0+ae^{i\theta}$}\\0& \mbox{o.w.}\end{array}\right.$
& \begin{center}$\frac{a^4}{4}$\end{center}\\
\hline
$\{z:a<|z|<b\}$ for $0<a<b<1,$ (annulus).&
 $ d\nu_2'(z) =\left\{\begin{array}{lr}
\lambda(b^2-a^2)\frac{d\theta}{2\pi}&\mbox{if } z=ae^{i\theta}\\0&
\mbox{o.w.}
\end{array}\right.$,
$d\nu_2''(z)$ $= \left\{\begin{array}{lr}
(1-\lambda)(b^2-a^2)\frac{d\theta}{2\pi}&\mbox{if } z=be^{i\theta}\\0&
\mbox{o.w.}
\end{array}\right.$ where
$\lambda = \frac{(b^2-a^2)-2a^2\log(b/a)}{2(b^2-a^2)\log(b/a)}$
and $\nu_2=\nu_2'+\nu_2''$.
& $\frac{1}{4}(b^4-a^4)$ $-\frac{1}{4}\frac{(b^2-a^2)^2}{\log(b/a)}$ \\
\hline
 $\{(x,y) |
\frac{x^2}{a^2}+\frac{y^2}{b^2}\leqslant 1\}$\;\;\; (ellipse).
&$d\nu_2(z)=\frac{ab}{2\pi}\left[1-{\frac{a^2-b^2}{a^2+b^2}}\cos(2\theta)\right]d\theta$
\;\;\;\;when $z\in \partial U$.&
$\frac{1}{2}\cdot\frac{(ab)^3}{a^2+b^2}$ \\
\hline
 $\{re^{i\theta}|0\leq r< b(1+2a\cos\theta), 0\le \theta\le
2\pi\}$ (Cardioid). & $d\nu_2(z)=\frac{b^2}{2\pi}(1+a^2+2a\cos
\theta)d\theta
$ \;\;\;\;\;\;\;when $z\in \partial U$. & $\frac{b^4}{2}(a^2+1)^2-\frac{b^4}{4}$\\
\hline
Fix $a<1$. $aT$ where $T$ be triangle with cube roots of
unity $1,\omega,\omega^2$ as vertices. &
\begin{center}$\ldots$\end{center}
&$\frac{a^4}{2\pi}\cdot \frac{9\sqrt{3}}{80}$\\
\hline
$\{re^{i\theta}:0<r<a,0<\theta<\pi\}$, (half-disk).
&\begin{center}$\ldots$\end{center}  &
$\frac{a^4}{2}\l(\frac{1}{2}-\frac{4}{\pi^2}\r)$\\
 \hline
\end{tabular}}
\end{center}
For detailed calculations, see Section \ref{example}.

\subsection{Computations for examples}\label{example}
In this section we calculate the balayage measure $\nu_2$ and the
constant $R_U$ explicitly for some particular open sets $U$. In the first example we consider annulus with inner and outer radius $a$ and $b$ respectively, a particular case of 
second part of Remark \ref{re:constant} (for $\alpha=2$).

\begin{example}\label{thm:annulus}
Fix $0<a<b<1$. Suppose $U=\{z\in \mathbb C: a<|z|<b\}$.
Then the balayage measure is $\nu_2=\nu_2'+\nu_2''$, where
\begin{eqnarray*}
 d\nu_2'(z)&=&\left\{\begin{array}{lr}
\lambda(b^2-a^2)\frac{d\theta}{2\pi}& \mbox{ if  } |z|=a\\0&
\mbox{o.w.}
\end{array}\right.
\\d\nu_2''(z)&=&\left\{\begin{array}{lr}
(1-\lambda)(b^2-a^2)\frac{d\theta}{2\pi}& \mbox{ if  } |z|=b\\0&
\mbox{o.w.}
\end{array}\right.
\end{eqnarray*}
and  $\lambda $ is given by
$$
\lambda = \frac{(b^2-a^2)-2a^2\log(b/a)}{2(b^2-a^2)\log(b/a)}.
$$
The constant is
$$
R_{U}=\frac{3}{4}+\frac{1}{4}(b^4-a^4)-\frac{1}{4}\frac{(b^2-a^2)^2}{\log(b/a)}.
$$
\end{example}

\noindent {\bf Note: }In particular if $a=b$, then
$R_{U}=\frac{3}{4}$. Again if
we take aspect ratio $a/b=c$, then
$$
R_{U}=\frac{3}{4}+\frac{1}{4}\left((1-c^4)+\frac{(1-c^2)^2}{\log
c}\right)b^4.
$$
Note that the same  expression has appeared  in hole probability for infinite Ginibre ensemble (Corollary \ref{thm:annulus1}).


In the next example we consider disk of radius $a$ contained in unit disk. Let $B(c_0,a)$ be the ball of radius $a$ centered at $c_0$.

 \begin{example}\label{ex=cB}
For $U=B(c_0,a)\subseteq \D$,
 the equilibrium measure is $\nu=\nu_1+\nu_2$, where
 $$
d\nu_1(z)=\left\{\begin{array}{lr}
\frac{1}{\pi}dm(z)& \mbox{  if  } z\in\bar{\mathbb D}\backslash U\\0& \mbox{o.w.}
\end{array}\right.
\hspace{.5cm} \mbox{ and } \;\;
d\nu_2(z)=\left\{\begin{array}{lr}
\frac{a^2}{2\pi}d\theta& \mbox{  if  } |z-c_0|=a\\0& \mbox{o.w.}
\end{array}\right.
$$
and the constant is $R_U=\frac{3}{4}+\frac{1}{4}a^4$.

\end{example}

Note that the equilibrium measure and the constant do not depend
on the position of the ball. These values depend only on the radius of
the ball. This follows directly from the fact that the balayage measure corresponding to uniform measure on a ball is uniform on its boundary, which follows easily from Fact \ref{ft:fundamental}. 
 Now we consider  ellipse.

\begin{example}\label{ex:ellipse}
Fix $0<a,b<1$. Suppose
$U=\{(x,y) | \frac{x^2}{a^2}+\frac{y^2}{b^2}< 1\}$. Then
$$
d\nu_2(z)=\frac{ab}{2\pi}\left[1-{\frac{a^2-b^2}{a^2+b^2}}\cos(2\theta)\right]d\theta\;\;\mbox{and
}\;\; R_U=\frac{3}{4}+\frac{1}{2}\cdot\frac{(ab)^3}{a^2+b^2}.
$$
\end{example}

\begin{proof}[Computation for Example \ref{ex:ellipse}]
Let $x=ar\cos{\theta},  y=br\sin{\theta}, 0 \leq \theta \leq 2\pi, 0
\leq r \leq 1$. Then we have
\begin{eqnarray*}
\frac{1}{\pi}\int_{U}w^ndm(w)&=&\frac{1}{\pi}\int_{U}(x+iy)^n dx dy\\
  &=&\frac{1}{\pi} \int_{0}^{1}\int_{0}^{2\pi}r^n(a\cos{\theta}+ib\sin{\theta})^n abr d{\theta}dr
\\&=&\frac{1}{\pi}\int_{0}^{2\pi}\frac{ab(a\cos{\theta}+ib\sin{\theta})^n}{n+2}d{\theta}
\\&&(\mbox{by substituting $\alpha=\frac{a+b}{2}, \beta=\frac{a-b}{2}$})
\\&=&\frac{1}{\pi}\int_{0}^{2\pi}\frac{({\alpha}^2-{\beta}^2)(\alpha e^{i\theta}+\beta e^{-i\theta})^n}{n+2}d{\theta} \\
\\&=& \l\{\begin{array}{lr}
\frac{1}{\pi}\cdot\frac{({\alpha}^2-{\beta}^2){\alpha}^{n/2}{\beta}^{n/2}{n \choose
n/2}}{n+2} & \mbox{if $n$ is even}\vspace{.3cm}
\\0 & \mbox{if $n$ is odd.}
\end{array}\r.
\end{eqnarray*}

Let $d\nu_2(w)=\frac{1}{\pi}[c_0+c_1(e^{2i\theta}+e^{-2i\theta})]d\theta$, then we have
\begin{eqnarray*}
\int_{\partial U}w^nd\nu_2(w)&=&\frac{1}{\pi}\int_{0}^{2\pi}{(\alpha e^{i\theta}+\beta e^{-i\theta})^n}[c_0+c_1(e^{2i\theta}+e^{-2i\theta})]d{\theta}\\
&=& \l\{\begin{array}{lr}\frac{1}{\pi}\left[ c_0{\alpha}^{n/2}{\beta}^{n/2}{n \choose
n/2}+c_1{n \choose
n/2-1}{\alpha}^{n/2}{\beta}^{n/2}(\frac{\alpha^2+\beta^2}{\alpha\beta})\right]
& \mbox{if $n$ is even}\vspace{.3cm}
\\0 & \mbox{if $n$ is odd.}
\end{array}\r.\\
\end{eqnarray*}
Note that if we take
$$({\alpha}^2-{\beta}^2)\frac{{n\choose n/2}}{n+2}= c_0{n \choose
n/2}+c_1{n \choose
n/2-1}\l(\frac{\alpha^2+\beta^2}{\alpha\beta}\r),\; \mbox{ for all $n$
even}
$$
which implies that
$$
\frac{({\alpha}^2-{\beta}^2)}{2}=c_0=-c_1\l(\frac{\alpha^2+\beta^2}{\alpha\beta}\r).
$$
Therefore $\nu_2$ satisfies \eqref{eqn:moment} for all $n$ and also has continuous density with respect to arclength of $\partial U$. Therefore, by Remark \ref{relations},  the   measure $\nu_2$ on $\partial U$  given by
$$
d\nu_2(z)=\frac{ab}{2\pi}\left[1-{\frac{a^2-b^2}{a^2+b^2}}\cos(2\theta)\right]d\theta
$$
is the balayage measure on $\partial U$ and constant $R_U$ is given by
$$
R_U=\frac{3}{4}+\frac{1}{2}\l[\int_{\partial
U}|w|^2d\nu_2(w)-\frac{1}{\pi}\int_{ U}|w|^2dm(w)\r]=\frac{3}{4}+\frac{1}{2}\cdot\frac{(ab)^3}{a^2+b^2}.
$$
Hence the result.
\end{proof}

Note that if we take $a=b$ then we get the Example \ref{ex=cB}. In
the next example we consider cardioid.

\begin{example}\label{ex:cardiod}
Fix $a,b>0$ such that 
$U=\{re^{i\theta}|0\leq r< b(1+2a\cos\theta), 0\le \theta\le 2\pi\} \subseteq \D$. Then the balayage measure $\nu_2$ and the constant $R_U$ are given by
$$
d\nu_2(w)=\frac{b^2}{\pi}(1+a^2+2a\cos \theta)d\theta\;\mbox{ and } R_U=\frac{3}{4}+\frac{b^4}{2}\l((a^2+1)^2-\frac{1}{2}\r).
$$
\end{example}

\noindent {\bf Note: }The cardioid $U$ can be thought of as small
perturbation of disk of radius $b$.

\begin{proof}[Computation for  Example \ref{ex:cardiod}]
We have
\begin{eqnarray*}
\frac{1}{\pi}\int_{U}w^ndm(w)&=&\frac{1}{\pi}\int_{0}^{2\pi}\int_{0}^{b(1+2a\cos{\theta})}r^ne^{in\theta}rdrd\theta\\
&=&\frac{1}{\pi}\int_{0}^{2\pi}\frac{b^{n+2}(1+2a\cos{\theta})^{n+2}}{n+2}e^{in\theta}d\theta\\
&=&\frac{b^{n+2}}{\pi}\int_{0}^{2\pi}\frac{(1+ae^{i\theta}+ae^{-i\theta})^{n+2}}{n+2}e^{in\theta}d\theta\\
&=&\frac{b^{n+2}}{\pi(n+2)}\int_{0}^{2\pi}\sum_{0\leq{u+v}\leq{n+2}}\frac{(n+2)!}{u!v!(n+2-u-v)!}a^{u+v}e^{i(n+u-v)\theta}d\theta\\
&=&\frac{b^{n+2}}{\pi(n+2)}\left(\frac{(n+2)!}{0!n!2!}a^{n}+\frac{(n+2)!}{1!(n+1)!0!}a^{n+2}\right).2{\pi}\\
&=&{b^{n+2}}\cdot a^n(n+1+2a^2).
\end{eqnarray*}
We show that the measure
$d\mu(w)=\frac{b^2}{2\pi}(1+2a^2+ae^{i\theta}+ae^{-i\theta})d\theta$ satisfies the
required condition \eqref{eqn:moment} to be the  balayage measure for cardioid. We have
\begin{eqnarray*}
\int_{\partial U}w^nd\mu(w)&=&\frac{b^{n+2}}{2\pi}\int_{0}^{2\pi}{(1+ae^{i\theta}+ae^{-i\theta})^n}e^{in\theta}(1+2a^2+ae^{i\theta}+ae^{-i\theta})d\theta
\\&=&\frac{b^{n+2}}{2\pi}\l(a^n(1+2a^2)+a\frac{n!}{1!0!(n-1)!}a^{n-1}\r).{2\pi}\\
&=&{b^{n+2}}.a^n(1+2a^2+n)
\\&=&\frac{1}{\pi}\int_{U}w^ndm(w) \;\;\mbox{ for all $n$. }
\end{eqnarray*}
 Therefore $\mu$ satisfies \eqref{eqn:moment} for all $n$ and also has continuous density with respect to arclength of $\partial U$. Therefore, by Remark \ref{relations},  the balayage  measure $\nu_2$ on boundary of $U$ is given by
 $$
d\nu_2(w)=\frac{b^2}{2\pi}(1+2a^2+ae^{i\theta}+ae^{-i\theta})d\theta.
$$
Therefore we have
\begin{eqnarray*}
&&\int_{\partial U}|w|^2d\nu_{2}(w)-\frac{1}{\pi}\int_{U}|w|^2dm(w)
\\&=&\frac{b^4}{2\pi}\int_{0}^{2\pi}{(1+2a\cos{\theta})^2}(1+2a^2+2a\cos{\theta})d\theta-\frac{1}{\pi}\int_{0}^{\pi}\int_{0}^{b(1+2a\cos{\theta})}r^3drd\theta\\
&=&b^4\l((a^2+1)^2-\frac{1}{2}\r).
\end{eqnarray*}
Hence we get the required constant $R_U$ from \eqref{eqn:robinconstant}.
\end{proof}

 In the next few examples, we could not find the balayage measure explicitly, however we calculated the constant $R_U$ explicitly.

\begin{example}\label{ex:triangle}
Fix $0<a<1$. Suppose $U=aT$, where $T$ be triangle with cube roots of unity $1,\omega,\omega^2$ as
vertices. Then the constant is
$$
R_U=\frac{3}{4}+\frac{a^4}{2\pi}\cdot \frac{9\sqrt{3}}{80}.
$$
\end{example}

\begin{proof}[Computation for Example \ref{ex:triangle}]
The region $T$ can be written as
$$
T=\{r(tw^p+(1-t)w^{p+1})|0\leq r < 1, 0\leq t \leq 1, p=0,1,2\}.
$$
Suppose $x+iy=ar(t\omega^p+(1-t)\omega^{p+1})$. Then by change of variables, we have
$$
\frac{1}{\pi}dm(z)=\frac{1}{\pi} dxdy=\frac{1}{\pi}\frac{\sqrt{3}}{2}a^2rdrdt.
$$
Let $d\nu_2(t)$ be the balayage measure on the boundary of triangle $T$. Then from \eqref{eqn:moment}, we get
$$
\int_{\partial U}z^nd\nu_2(z)=\frac{1}{\pi}\int_{ U}z^ndm(z), \;\;\mbox{for all }\; n\geqslant 0.
$$
Which implies for all $n\ge 0$,
\begin{eqnarray*}
&&\int_{0}^{1}(t+(1-t)\omega)^n(1+\omega^n+\omega^{2n})d\nu_2(t)
\\&=&\int_{0}^{1}\int_{0}^{1}r^n(t+(1-t)\omega)^n(1+\omega^n+\omega^{2n})\frac{\sqrt{3}}{2\pi}a^2rdrdt.
\end{eqnarray*}
Since $1+\omega^n+\omega^{2n}=0$ when $n$ is not multiple of $3$. Therefore we get
\begin{eqnarray}\label{eqn:triangle}
\int_{0}^{1}(t+(1-t)\omega)^{3n}d\nu_2(t)= \int_{0}^{1}
(t+(1-t)\omega)^{3n}\frac{\sqrt{3}a^2}{2\pi(3n+2)}dt.
\end{eqnarray}
for all $n\ge 0$. This is the key equation to calculate the balayage measure on $\partial U$.
Solve this equation, we can get the balayage measure on $\partial U$. But we could not solve this equation.

We manage to calculate the constant $R_U$ using \eqref{eqn:triangle}.
By putting $n=1$ and comparing the real parts in both side of \eqref{eqn:triangle}, we have
\begin{eqnarray*}
\int_{0}^1(1-\frac{9}{2}t(1-t))d\nu_2(t)=\int_{0}^1(1-\frac{9}{2}t(1-t))\frac{\sqrt{3}a^2}{10\pi}dt.
\end{eqnarray*}
(As real part of $(t+(1-t)\omega)^{3}$ is $(1-\frac{9}{2}t(1-t))$. By using the fact that $\int_0^1 d\nu_2(t)=\frac{\sqrt{3}a^2}{4\pi}$
and simplifying the last equation we get
\begin{eqnarray}\label{eqn:nu2}
\int_0^1 t(1-t))d\nu_2(t)=\frac{\sqrt{3}a^2}{20\pi}.
\end{eqnarray}

\noindent Therefore we have
\begin{eqnarray*}
&&\int_{\partial U}|z|^2d\nu_2(z)-\frac{1}{\pi}\int_U |z|^2 dm(z)
\\&=&3\l[\int_{0}^1|ar(t+(1-t)\omega)|^2d\nu_2(t)-\int_0^1\int_0^1|ar(t+(1-t)\omega)|^2\frac{\sqrt{3}a^2}{2\pi}rdrdt\r]
\\&=&3a^2\l[\int_{0}^1[1-3t(1-t)]d\nu_2(t)-\frac{\sqrt{3}a^2}{2\pi}\int_0^1\int_0^1[1-3t(1-t)]r^3drdt\r]
\\&=&3a^2\l(\frac{\sqrt{3}a^2}{10\pi}-\frac{\sqrt{3}a^2}{16\pi}\r)\;\hspace{2cm}\mbox{( by \eqref{eqn:nu2} )}
\\&=&\frac{9\sqrt{3}a^4}{80\pi}.
\end{eqnarray*}
Hence we have the required constant $R_U$ from \eqref{eqn:robinconstant}.
\end{proof}

In the next example, we consider semi-disk. We only calculated the constant $R_U$. We were unable to find equilibrium measure.

\begin{example}\label{ex:halfdisk}
Fix $0<a<1$. Suppose $U=\{re^{i\theta}:0<r<a,0<\theta<\pi\}$. Then the constant is
$$
R_U=\frac{3}{4}+\frac{a^4}{2}\l(\frac{1}{2}-\frac{4}{\pi^2}\r).
$$
\end{example}

\begin{proof}[Computation for Example \ref{ex:halfdisk}]
Let $\nu_2=\nu_2'+\nu_2''$ be the balayage measure. Where $\nu_2'$  is the measure on diameter of semicircle ( $\{re^{i\theta }: r\le a, \theta=0,\pi\}$) and $d\nu_2''(z)=g(\theta)d\theta$
is the  measure on circular arc ($\{ae^{i\theta}:0\le \theta \le \pi\}$). Then from \eqref{eqn:moment}, we get
$$
\int_{\partial U}z^nd\nu_2(z)=\frac{1}{\pi}\int_{ U}z^nm(z), \;\;\mbox{for all }\; n\geqslant 0.
$$
Which implies for all $n\ge 0$,
\begin{eqnarray*}
&&\int_{-a}^{a}t^nd\nu_2'(t)+\int_{0}^{\pi}a^ne^{in\theta}g(\theta)d\theta
=\int_{0}^{\pi}\int_{0}^{a}r^ne^{in\theta}rdr\frac{d\theta}{\pi} .
\end{eqnarray*}
 Therefore we get
\begin{eqnarray}\label{eqn:semicircle}
\int_{-a}^{a}t^nd\nu_2'(t)+\int_{0}^{\pi}a^ne^{in\theta}g(\theta)d\theta=
 \left\{\begin{array}{lr}\frac{2ia^{n+2}}{n(n+2)\pi}&\mbox{$n$ is odd}\\o&\mbox{$n$ is even}\end{array}\right.
\end{eqnarray}
 This is the key equation to calculate the balayage measure   on $\partial U$. In principle if we solve this equation then we get the balayage measure on $\partial U$.
 But we could not solve this equation.

However, with out calculating balyage measure we are  able to calculate
the constant $R_U$ using \eqref{eqn:semicircle}. Comparing
imaginary part in both side of \eqref{eqn:semicircle}, we have
\begin{eqnarray}\label{eqn:relation}
\int_{0}^{\pi}\sin{n\theta}g(\theta)d\theta= \left\{\begin{array}{lr}\frac{2a^{2}}{n(n+2)\pi}&\mbox{$n$ is odd}\\o&\mbox{$n$ is even}\end{array}\right.
\end{eqnarray}
Since $g(\theta)$ is defined on $[0,\pi]$, its fourier series can be made to contain only sine terms, and moreover because of its
symmetry with respect to $\frac{\pi}{2}$, $g(\theta)$'s fourier series contains only odd sine terms. Therefore we get
\begin{eqnarray}\label{eqn:series}
g(\theta)=\frac{4a^2}{\pi^2}\sum_{k=1}^{\infty} \frac{\sin{(2k-1)\theta}}{4k^2-1}.
\end{eqnarray}

\noindent Using \eqref{eqn:semicircle}, \eqref{eqn:relation} and \eqref{eqn:series} we get
\begin{eqnarray*}
\int_{\partial U}|z|^2d\nu_2(z)&=& \int_{-a}^{a}t^2d\nu_2'(t)+\int_{0}^{\pi}a^2g(\theta)d\theta
\\&=&a^2\int_{0}^{\pi}(1-\cos{2\theta})g(\theta)d\theta
\\&=&\frac{4a^4}{{\pi}^2}\l[\sum_{k=1}^{\infty}\frac{1}{(4k^2-1)}\l(\frac{2}{2k-1}-\frac{1}{2k+1}-\frac{1}{2k-3}\r)\r]
\\&=&\frac{2a^4}{{\pi}^2}\l(\frac{3\pi^2}{8}-2\r)
\\&=&a^4\l(\frac{3}{4}-\frac{4}{{\pi}^2}\r).
\end{eqnarray*}
Therefore  the constant
$$
R_U=\frac{3}{4}+\frac{1}{2}\left[\int_{\partial U}|z|^2d\nu_2(z)-\frac{1}{\pi}\int_{ U}|z|^2dm(z)\right]=\frac{3}{4}+\frac{a^4}{2}\l(\frac{1}{2}-\frac{4}{{\pi}^2}\r).
$$
Hence the desired result.
\end{proof}
 
\chapter{Determinantal point processes with finitely many points}\label{ch:finite}
 
 In this chapter we calculate the asymptotics of the hole probabilities for $\X_n^{(\alpha)}$ and $\X_n^{(g)}$, defined in Section \ref{sec:exdet}, as $n\to \infty$. Through out this section we assume that the function $g$ satisfies all the conditions given in Section \ref{sec:exdet}.   First we derive few properties of the point process  $\X_n^{(\alpha)}$. We show that if we scale down the points of $\X_n^{\alpha}$ by $n^{-\frac{1}{\alpha}}$, then the expected empirical distributions converge to a compactly supported measure. In other words, the points of $\X_n^{(\alpha)}$ are roughly distributed within disk of radius $n^{\frac{1}{\alpha}}$.

\section{Distribution of the absolute values of points of $\X_{n}^{(\alpha)}$}
In this section we derive the distribution of the absolute values of the points of $\X_{n}^{(\alpha)}$.
Recall that, the vector of points of $\mathcal X_n^{(\alpha)}$ (in uniform random order) has density
 \begin{align}\label{eqn:densitygen}
 \frac{\alpha^n}{n!( 2\pi)^n\prod_{k=0}^{n-1}\Gamma(\frac{2}{\alpha}(k+1))}e^{-\sum_{k=1}^{n}|z_k|^{\alpha}}\prod_{i<j}|z_i-z_j|^2
 \end{align}
with respect to the Lebesgue measure on $\C^n$. The following lemma gives the distribution of the absolute values of the points of $\mathcal X_n^{(\alpha)}$, generalizing of the result of Kostlan (\cite{kostlan}, Theorem 1.1), who proved this for $\alpha=2$ (Ginibre ensemble).

\begin{lemma}\label{lem:genkostlan}
The set of absolute values of the points of $\mathcal X_n^{(\alpha)}$ has the same distribution as $\{R_1,R_2,\ldots,R_n\} $ where $R_k$ are independent and $R_k^{\alpha}\sim \mbox{Gamma}(\frac{2}{\alpha}
 k,1)$.
\end{lemma}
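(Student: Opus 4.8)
The plan is to compute the joint law of the radii $|z_1|,\dots,|z_n|$ directly from the density \eqref{eqn:densitygen} by integrating out the angular variables, and then to recognize the resulting symmetric density as the one describing the unordered collection $\{R_1,\dots,R_n\}$ of independent radii with $R_k^{\alpha}\sim\mbox{Gamma}(\tfrac{2}{\alpha}k,1)$. First I would pass to polar coordinates $z_k=r_ke^{i\theta_k}$, so that $dm(z_k)=r_k\,dr_k\,d\theta_k$, and expand the squared Vandermonde as a double sum over permutations,
\[
\prod_{i<j}|z_i-z_j|^2=\bigl|\det(z_i^{\,j-1})_{i,j=1}^n\bigr|^2=\sum_{\sigma,\tau\in S_n}\sgn(\sigma)\sgn(\tau)\prod_{k=1}^n r_k^{\sigma(k)+\tau(k)-2}e^{i(\sigma(k)-\tau(k))\theta_k}.
\]
Integrating each $\theta_k$ over $[0,2\pi)$ and using $\int_0^{2\pi}e^{im\theta}\,d\theta=2\pi\,\delta_{m,0}$, only the diagonal terms $\sigma=\tau$ survive, which yields a permanent,
\[
\int_{[0,2\pi)^n}\prod_{i<j}|z_i-z_j|^2\prod_k d\theta_k=(2\pi)^n\sum_{\sigma\in S_n}\prod_{k=1}^n r_k^{2\sigma(k)-2}=(2\pi)^n\,\per\!\bigl[r_i^{2(j-1)}\bigr]_{i,j=1}^n.
\]

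Carrying the Jacobian factor $\prod_k r_k$ through and using $\prod_{k=0}^{n-1}\Gamma(\tfrac{2}{\alpha}(k+1))=\prod_{k=1}^{n}\Gamma(\tfrac{2}{\alpha}k)$, this produces the symmetric density of the unordered radii on $\R_+^n$,
\[
p(r_1,\dots,r_n)=\frac{\alpha^n}{n!\prod_{k=1}^n\Gamma(\tfrac{2}{\alpha}k)}\sum_{\sigma\in S_n}\prod_{k=1}^n r_k^{2\sigma(k)-1}e^{-r_k^{\alpha}}.
\]
Next I would identify the law of a single $R_k$: the substitution $s=r^{\alpha}$ turns $R_k^{\alpha}\sim\mbox{Gamma}(\tfrac{2}{\alpha}k,1)$ into the density $\tfrac{\alpha}{\Gamma(2k/\alpha)}\,r^{2k-1}e^{-r^{\alpha}}$ on $(0,\infty)$, so the labelled vector $(R_1,\dots,R_n)$ of independent variables has joint density $q(r_1,\dots,r_n)=\tfrac{\alpha^n}{\prod_{k=1}^n\Gamma(2k/\alpha)}\prod_{k=1}^n r_k^{2k-1}e^{-r_k^{\alpha}}$. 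The unordered set $\{R_1,\dots,R_n\}$ is represented by the symmetrization $\tfrac{1}{n!}\sum_{\tau\in S_n}q(r_{\tau(1)},\dots,r_{\tau(n)})$; reindexing each product by $\sigma=\tau^{-1}$ shows this symmetrized density is term-for-term equal to $p$.

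Finally I would conclude that, since both the set of absolute values of $\mathcal X_n^{(\alpha)}$ and the set $\{R_1,\dots,R_n\}$ are governed by the same symmetric density $p$ on $\R_+^n$, the two unordered collections have the same distribution. The computational heart is the angular integration, where orthogonality kills the off-diagonal terms, but that step is routine. The point requiring genuine care, and the one I expect to be the main obstacle, is the bookkeeping between unordered configurations and ordered tuples: one must check that the factors of $n!$ and the normalizing constants line up and that symmetrizing the independent-variables density $q$ reproduces $p$ exactly rather than merely up to a constant.
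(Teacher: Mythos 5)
Your proposal is correct and follows essentially the same route as the paper's proof: expand the squared Vandermonde as a double sum over permutations, integrate out the angles so that orthogonality kills the off-diagonal terms, and match the resulting permanent-type symmetric density with the symmetrization of the product of the densities of independent $R_k$ with $R_k^{\alpha}\sim\mbox{Gamma}(\tfrac{2}{\alpha}k,1)$. The only cosmetic difference is that the paper performs the change of variables $s=r^{\alpha}$ on the joint radial density to recognize Gamma densities, whereas you transform each marginal first and then symmetrize; the bookkeeping of the $n!$ and the normalizing constants checks out either way.
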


\noindent From Lemma \ref{lem:genkostlan}, we have the following corollary, which will be used in Chapter \ref{ch:infinite}.
\begin{cor}\label{cor:radialdistribution}
The set of absolute values of the points of $\mathcal X_{\infty}^{(\alpha)}$  has the same distribution as $\{R_1,R_2,\ldots \},$ where $R_k^{\alpha}\sim \mbox{Gamma}(\frac{2}{\alpha}k,1)$ and all the $R_k$s are independent.
\end{cor}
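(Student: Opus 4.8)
The plan is to obtain the corollary as the $n \to \infty$ limit of Lemma \ref{lem:genkostlan}, using the fact, noted earlier in the excerpt, that $\X_n^{(\alpha)}$ converges to $\X_{\infty}^{(\alpha)}$ in distribution. The crucial observation is that in Lemma \ref{lem:genkostlan} the law of each $R_k$ (namely $R_k^{\alpha}\sim \mbox{Gamma}(\frac{2}{\alpha}k,1)$) does not depend on $n$. Hence the finite configurations $\{R_1,\ldots,R_n\}$ are consistent truncations of a single infinite family $\{R_1,R_2,\ldots\}$ of independent radii, obtained by appending one more independent point as $n$ increases, and the corollary amounts to showing that this truncation passes to the limit on both sides.

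First I would check that the candidate limit $\{R_1,R_2,\ldots\}$ is almost surely a locally finite point process on $[0,\infty)$, so that it genuinely defines a point process. Fix $M>0$; it suffices to show that only finitely many $R_k$ lie in $[0,M]$ almost surely. Since $R_k^{\alpha}\sim \mbox{Gamma}(\frac{2}{\alpha}k,1)$ has mean $\frac{2}{\alpha}k\to\infty$, a standard Chernoff bound for the lower tail of the Gamma distribution shows that $\mathbb{P}[R_k\le M]=\mathbb{P}[\mbox{Gamma}(\frac{2}{\alpha}k,1)\le M^{\alpha}]$ decays geometrically in $k$, so $\sum_k \mathbb{P}[R_k\le M]<\infty$ and Borel--Cantelli applies. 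This yields $R_k\to\infty$ almost surely and local finiteness, and in particular the truncations $\{R_1,\ldots,R_n\}$ converge almost surely (hence in distribution) to $\{R_1,R_2,\ldots\}$.

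Next I would transfer the distributional convergence $\X_n^{(\alpha)}\convd \X_{\infty}^{(\alpha)}$ through the map $z\mapsto |z|$. This map is proper, since the preimage of $[0,M]$ is the closed disk $\overline{D(0,M)}$, which is compact; consequently $f\circ|\cdot|$ is continuous and compactly supported whenever $f$ is, so the pushforward of a counting measure under $z\mapsto|z|$ is continuous in the vague topology on locally finite measures. By the continuous mapping theorem, the point process of absolute values of $\X_n^{(\alpha)}$ then converges in distribution to the point process of absolute values of $\X_{\infty}^{(\alpha)}$.

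Finally I would combine the two limits. By Lemma \ref{lem:genkostlan} the absolute values of $\X_n^{(\alpha)}$ are distributed as $\{R_1,\ldots,R_n\}$, which converges to $\{R_1,R_2,\ldots\}$ by the first step; since they also converge to the absolute values of $\X_{\infty}^{(\alpha)}$ by the second step, uniqueness of distributional limits forces the absolute values of $\X_{\infty}^{(\alpha)}$ to have the same law as $\{R_1,R_2,\ldots\}$, which is the claim. I expect the main obstacle to be the second step, namely carefully justifying that convergence in distribution of the planar point processes passes to their images under $z\mapsto|z|$ in the correct topology of point processes; the Gamma tail estimate in the first step is routine.
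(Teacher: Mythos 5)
Your proposal is correct and follows essentially the same route as the paper, which states the corollary as an immediate consequence of Lemma \ref{lem:genkostlan} together with the distributional convergence $\X_n^{(\alpha)}\to\X_{\infty}^{(\alpha)}$, without writing out the limiting argument. Your three steps (local finiteness of $\{R_1,R_2,\ldots\}$ via Borel--Cantelli, continuity of the pushforward under the proper map $z\mapsto|z|$, and uniqueness of distributional limits) supply exactly the details the paper leaves implicit.
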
 

\begin{proof}[Proof of Lemma \ref{lem:genkostlan}]
The Vandermonde determinant is
$$
\prod_{i<j}(z_i-z_j)=\det(z_i^{j-1})_{1\le i,j\le n}=\sum_{\sigma\in S_n}{{\mbox{sgn}}(\sigma)}\prod_{j=1}^{n}z_j^{\sigma(j)-1}.
$$
Set $z_k=r_ke^{i\theta}$ for $k=1,2,\ldots,n$. Then the density of the set of points of $\mathcal X_n^{(\alpha)}$, \eqref{eqn:densitygen}, can be written as 
\begin{align*}
& f_{R_1,\ldots,R_n,\Theta_1,\ldots,\Theta_n}(r_1,\ldots,r_n,\theta_1\ldots,\theta_n)
\\=&C_n.e^{-\sum_{j=1}^{n}r_j^{\alpha}}.\left|\sum_{\sigma\in S_n}{{\mbox{sgn}}(\sigma)}\prod_{j=1}^n\left(r_je^{i\theta_j}\right)^{\sigma(j)-1}\right|^2.r_j
\\=&C_n.e^{-\sum_{j=1}^{n}r_j^{\alpha}}.\sum_{\sigma,\tau\in S_n}
{{\mbox{sgn}}(\sigma){\mbox{sgn}}(\tau)}\prod_{j=1}^nr_j^{(\sigma(j)+\tau(j)-1)}e^{i\theta_j(\sigma(j)-\tau(j))}.
\end{align*}
where $0\le r_j<\infty$ and $0\le \theta_j\le 2\pi$ for $j=1,2,\ldots,n$, and $C_n=\frac{\alpha^n}{n!(2\pi)^n\prod_{k=0}^{n-1}\Gamma(\frac{2}{\alpha}(k+1))}$. Again we have 
$$
\int_{0}^{2\pi}e^{i\theta(\sigma(j)-\tau(j))}d\theta=\left\{\begin{array}{lr}
2\pi & \mbox{if } \sigma(j)=\tau(j),
\\0 & \mbox{otherwise}.
\end{array}\right.
$$
Therefore the density of absolute values of the set of points of $\mathcal X_n^{(\alpha)}$ is 
\begin{align*}
f_{R_1,\ldots,R_n}(r_1,\ldots,r_n)&=C_n.(2\pi)^ne^{-\sum_{k=1}^{n}r_j^{\alpha}}\sum_{\sigma\in S_n}\prod_{j=1}^{n}r_j^{2\sigma(j)-1}
=\frac{1}{n!}\sum_{\sigma\in S_n}\prod_{j=1}^{n}\frac{\alpha}{\Gamma\left(\frac{2}{\alpha}\sigma(j)\right)} e^{-r_j^{\alpha}}r_j^{2\sigma(j)-1}.
\end{align*}
By change of variables formula, the joint density of $(R_1^{\alpha},R_2^{\alpha},\ldots,R_n^{\alpha})$ is
\begin{align*}
f_{R_1^{\alpha},\ldots,R_n^{\alpha}}(r_1,\ldots,r_n)=\frac{1}{n!}\sum_{\sigma\in S_n}\prod_{j=1}^{n}\frac{1}{\Gamma\left(\frac{2}{\alpha}\sigma(j)\right)} e^{-r_j}r_j^{\frac{2}{\alpha}\sigma(j)-1}.
\end{align*}
Therefore the joint density of $\{R_1^{\alpha},R_2^{\alpha},\ldots,R_n^{\alpha}\}$ is
\begin{align*}
\sum_{\sigma\in S_n}\prod_{j=1}^{n}\frac{1}{\Gamma\left(\frac{2}{\alpha}\sigma(j)\right)} e^{-r_j}r_j^{\frac{2}{\alpha}\sigma(j)-1}.
\end{align*}
This implies that $R_1^{\alpha},\ldots,R_n^{\alpha}$ are independent and $R_k^{\alpha}\sim$Gamma$(\frac{2}{\alpha}k,1)$. Hence the result.
\end{proof}

\section{Scaling limit}
In this section we prove a result analogous to the circular law. Let $z_1,z_2,\ldots,z_n$ be a set of points in $\mathcal X_n^{(\alpha)}$. Define the empirical measure   
$$
\rho_n^{(\alpha)}(\cdot)=\frac{1}{n}\sum_{k=1}^{n}\delta_{z_k}(\cdot)=\frac{1}{n}\cdot|\{k\suchthat z_k\in (\cdot), k=1,\ldots,n\}|,
$$
where $|A|$ denotes the number of points in $A$. The following theorem gives the limiting expected distribution of scaled $\X_n^{(\alpha)}$. Let $n^{-\frac{1}{\alpha}}.\mathcal X_n^{(\alpha)}:=\{n^{-\frac{1}{\alpha}}.z\;:\;z\in\mathcal X_n^{(\alpha)}\}$, points of $\X_n^{(\alpha)}$ are scaled by $n^{-\frac{1}{\alpha}}$. 

\begin{theorem}\label{thm:scalelim}
The limiting expected empirical distribution of points of $n^{-\frac{1}{\alpha}}.\mathcal X_n^{(\alpha)}$ is same as  $\mu^{(\alpha)}$, where 
$$
d\mu^{(\alpha)}(re^{i\theta})=\left\{\begin{array}{lr}
\frac{\alpha^2}{4\pi}r^{\alpha-1}drd\theta & \mbox{if  }\;\; 0<r<(\frac{2}{\alpha})^{\frac{1}{\alpha}},0\le \theta \le 2\pi,
\\ 0 & \mbox{otherwise}.
\end{array}\right.
$$
\end{theorem}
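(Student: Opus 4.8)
The plan is to reduce everything to the radial statistics furnished by Lemma \ref{lem:genkostlan} and then to run an elementary concentration-plus-Riemann-sum argument. Write $\bar{\rho}_n$ for the expected empirical measure of $n^{-1/\alpha}\cdot\mathcal X_n^{(\alpha)}$, so that for every bounded continuous $\phi$,
$$
\int \phi\,d\bar{\rho}_n=\frac1n\sum_{k=1}^n\mathbb E\big[\phi(n^{-1/\alpha}z_k)\big].
$$
Since the density \eqref{eqn:densitygen} is invariant under the simultaneous rotation $z_j\mapsto e^{i\psi}z_j$, the measure $\bar{\rho}_n$ is rotationally invariant. Hence it suffices to test against radial functions $\phi(z)=f(|z|)$ with $f$ bounded continuous and to identify the radial marginal. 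The sum $\sum_k f(n^{-1/\alpha}|z_k|)$ is a symmetric function of the moduli, hence a function of the multiset $\{|z_k|\}$, whose law is that of $\{R_1,\dots,R_n\}$ by Lemma \ref{lem:genkostlan}; therefore
$$
\int \phi\,d\bar{\rho}_n=\frac1n\sum_{k=1}^n\mathbb E\big[f(n^{-1/\alpha}R_k)\big],
\qquad R_k^{\alpha}\sim\mbox{Gamma}\big(\tfrac2\alpha k,1\big)\ \mbox{independent}.
$$

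Next I would establish concentration of the scaled radii. Put $X_k:=R_k^{\alpha}$ and $g(x):=f(x^{1/\alpha})$, so that $\mathbb E[f(n^{-1/\alpha}R_k)]=\mathbb E[g(X_k/n)]$. Since $\mathbb E[X_k/n]=\tfrac{2k}{\alpha n}$ and $\Var(X_k/n)=\tfrac{2k}{\alpha n^{2}}\le\tfrac{2}{\alpha n}$, Chebyshev's inequality gives $\mathbb P\big(|X_k/n-\tfrac{2k}{\alpha n}|>\varepsilon\big)\le\tfrac{2}{\alpha n\varepsilon^{2}}$, a bound uniform in $1\le k\le n$. Because $g$ is bounded and the Gamma law has exponentially small upper tails (so that $X_k/n\le M$ with overwhelming probability, uniformly in $k$, for a fixed large $M$, where $g$ restricted to $[0,M]$ is uniformly continuous), a truncation argument upgrades this to
$$
\Big|\frac1n\sum_{k=1}^n\mathbb E[g(X_k/n)]-\frac1n\sum_{k=1}^n g\big(\tfrac{2k}{\alpha n}\big)\Big|\xrightarrow[n\to\infty]{}0.
$$
The remaining sum is a Riemann sum for the continuous integrand $t\mapsto g(\tfrac{2t}{\alpha})$ on $[0,1]$, so $\frac1n\sum_{k=1}^n g(\tfrac{2k}{\alpha n})\to\int_0^1 g(\tfrac{2t}{\alpha})\,dt=\int_0^1 f\big((\tfrac{2t}{\alpha})^{1/\alpha}\big)\,dt$.

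Finally I would change variables via $r=(\tfrac{2t}{\alpha})^{1/\alpha}$, i.e.\ $t=\tfrac{\alpha}{2}r^{\alpha}$ and $dt=\tfrac{\alpha^{2}}{2}r^{\alpha-1}\,dr$, which turns the limit into $\int_0^{(2/\alpha)^{1/\alpha}} f(r)\,\tfrac{\alpha^{2}}{2}r^{\alpha-1}\,dr$. Integrating a radial $\phi(z)=f(|z|)$ against $\mu^{(\alpha)}$ produces exactly this value, and both measures are rotationally invariant with total mass one; hence $\bar{\rho}_n\Rightarrow\mu^{(\alpha)}$. (As a sanity check, $\mu^{(\alpha)}$ so obtained coincides with the equilibrium measure of Theorem \ref{thm:diskgeneral} for $g(r)=r^{\alpha}$, since $\tfrac{\alpha^{2}}{4\pi}r^{\alpha-2}\,dm(z)=\tfrac{\alpha^{2}}{4\pi}r^{\alpha-1}\,dr\,d\theta$.) The only genuinely delicate point is the second step: keeping the concentration estimate uniform in $k$ and controlling the upper tail so that a merely bounded continuous $f$ can be handled rather than a Lipschitz or compactly supported one; this is precisely where the exponential Gamma tail bounds do the work, while the rest is bookkeeping. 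An alternative route would analyse the scaled diagonal of the kernel $\tfrac1n\mathbb K_n^{(\alpha)}(z,z)$ directly, but the argument above is cleaner since it invokes the already-established Lemma \ref{lem:genkostlan}.
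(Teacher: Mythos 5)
Your argument is correct. It shares the paper's overall strategy --- exploit rotational invariance to reduce to the radial marginal of the expected empirical measure, identify the limit of the scaled radii, and convert back to the planar density by the change of variables $t=\tfrac{\alpha}{2}r^{\alpha}$ --- but the two steps in the middle are carried out differently. The paper works from the one-point intensity $\tfrac1n\mathbb K_n^{(\alpha)}(z,z)$ (equation \eqref{eqn:densitylim}), writes down the density of $X_n=R^{\alpha}/n$ in closed form, and proves Lemma \ref{lem:radial} by computing the moment generating function $\E[e^{tX_n}]$ explicitly as a geometric sum converging to $\frac{e^{2t/\alpha}-1}{2t/\alpha}$, the MGF of $U[0,\tfrac2\alpha]$. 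You instead invoke Lemma \ref{lem:genkostlan} to write the radial test integral as $\frac1n\sum_k\E[f(n^{-1/\alpha}R_k)]$ and then use Chebyshev concentration of $R_k^{\alpha}/n$ about $\tfrac{2k}{\alpha n}$ together with a Riemann sum. Both are complete; the paper's computation is shorter because the MGF happens to telescope, while your route is more robust (it never needs a closed form and would survive perturbations of the weight) at the cost of the uniform-continuity/truncation bookkeeping. One remark: since $g=f(\,\cdot^{1/\alpha})$ is bounded and the reference points $\tfrac{2k}{\alpha n}$ all lie in $[0,\tfrac2\alpha]$, the Chebyshev bound alone already controls the bad event by $2\|g\|_\infty\cdot\tfrac{2}{\alpha n\varepsilon^2}$, so the appeal to exponential Gamma tails is not actually needed --- uniform continuity of $g$ on $[0,\tfrac2\alpha+1]$ suffices.
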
 

\noindent Note that $\alpha=2$ gives the limiting expected empirical distribution of scaled Ginibre converges to uniform probability measure on the unit disk. The following lemma is the key result to prove Theorem \ref{thm:scalelim}.

\begin{lemma}\label{lem:radial} 
The limiting expected empirical distribution of absolute values of points of $n^{-\frac{1}{\alpha}}.\mathcal X_n^{(\alpha)}$ is same as the distribution of $S$, where $S$ is a positive random variable such that  $S^{\alpha}\sim \mbox{U}[0,\frac{2}{\alpha}]$ (uniform random variable on $[0,\frac{2}{\alpha}]$). 
\end{lemma}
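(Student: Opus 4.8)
The plan is to reduce the statement to the independent radial variables furnished by Lemma~\ref{lem:genkostlan} and then identify the limit by the method of moments. By Lemma~\ref{lem:genkostlan} the absolute values of the points of $\mathcal X_n^{(\alpha)}$ are distributed as independent $R_1,\dots,R_n$ with $R_k^{\alpha}\sim\mbox{Gamma}(\frac{2}{\alpha}k,1)$; hence the absolute values of the points of $n^{-\frac{1}{\alpha}}.\mathcal X_n^{(\alpha)}$ are distributed as $n^{-\frac1\alpha}R_1,\dots,n^{-\frac1\alpha}R_n$. Writing $L_n=\frac1n\sum_{k=1}^n\delta_{n^{-1/\alpha}R_k}$ for the random empirical measure of these radii, the expected empirical distribution is the averaged law $\mathbb E[L_n]=\frac1n\sum_{k=1}^n\mathcal L(n^{-1/\alpha}R_k)$, and the assertion to be proved is that $\mathbb E[L_n]$ converges weakly to $\mathcal L(S)$.

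It is cleanest to pass to $\alpha$-th powers. Put $T_k=(n^{-1/\alpha}R_k)^{\alpha}=\frac1n R_k^{\alpha}$, so that $T_k\sim\frac1n\mbox{Gamma}(\frac2\alpha k,1)$, and let $\bar\nu_n=\frac1n\sum_{k=1}^n\mathcal L(T_k)$ be the image of $\mathbb E[L_n]$ under $x\mapsto x^{\alpha}$. Since $x\mapsto x^{1/\alpha}$ is a continuous self-map of $[0,\infty)$ that sends $\bar\nu_n$ back to $\mathbb E[L_n]$ and sends $\mbox{U}[0,\frac2\alpha]$ to $\mathcal L(S)$, the continuous mapping theorem reduces the problem to showing $\bar\nu_n\Rightarrow\mbox{U}[0,\frac2\alpha]$.

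For this last convergence I would compute moments. Using $\mathbb E[\mbox{Gamma}(a,1)^p]=\Gamma(a+p)/\Gamma(a)$, the $p$-th moment of $\bar\nu_n$ is $\int x^p\,d\bar\nu_n(x)=\frac{1}{n^{p+1}}\sum_{k=1}^n\frac{\Gamma(\frac2\alpha k+p)}{\Gamma(\frac2\alpha k)}$. The ratio asymptotics $\Gamma(x+p)/\Gamma(x)=x^{p}(1+O(1/x))$ turn this into a Riemann sum, giving $\int x^p\,d\bar\nu_n(x)\to\int_0^1(\frac2\alpha t)^{p}\,dt=\frac{(2/\alpha)^{p}}{p+1}$, which is exactly the $p$-th moment of $\mbox{U}[0,\frac2\alpha]$. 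Since $\mbox{U}[0,\frac2\alpha]$ has bounded support it is determined by its moments, so the Fr\'echet--Shohat theorem upgrades moment convergence to weak convergence $\bar\nu_n\Rightarrow\mbox{U}[0,\frac2\alpha]$, finishing the proof.

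The \textbf{one point requiring care} is the uniform control of the Gamma ratio in the sum: the asymptotic $\Gamma(\frac2\alpha k+p)/\Gamma(\frac2\alpha k)\sim(\frac2\alpha k)^{p}$ is sharp only for large $k$, so one must verify that the small-index terms and the $O(1/k)$ relative errors are negligible after dividing by $n$. This is routine, since the total error in the $p$-th moment is $\frac1n\sum_{k=1}^n O((k/n)^{p}/k)=O(1/n)$. One could equally avoid moments and argue directly from distribution functions: $\frac1n\mbox{Gamma}(\frac2\alpha k,1)$ concentrates at its mean $\frac{2k}{\alpha n}$ with fluctuations of order $n^{-1}\sqrt k$, so that $\mathbb E[L_n]$ is asymptotically the uniform grid $\{\frac{2k}{\alpha n}\}_{k=1}^n$ on $[0,\frac2\alpha]$, whose empirical measure converges to $\mbox{U}[0,\frac2\alpha]$.
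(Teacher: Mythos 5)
Your proof is correct, but it identifies the limit by a different transform than the paper does. Both arguments ultimately work with the same object: the expected empirical distribution of the $\alpha$-th powers of the scaled radii, which is the average $\frac1n\sum_{k=1}^n\mathcal L\bigl(\frac1n R_k^{\alpha}\bigr)$ of scaled Gamma laws. You reach it via Lemma~\ref{lem:genkostlan}, whereas the paper reads it off from the diagonal of the kernel, $d\mu_n^{(\alpha)}(z)=\frac1n\mathbb K_n^{(\alpha)}(z,z)\,dm(z)$, and a change of variables --- these are the same measure. The divergence is in the last step: the paper computes the moment generating function $\E[e^{tX_n}]=\frac1n\sum_{k=0}^{n-1}(1-\frac tn)^{-\frac{2}{\alpha}(k+1)}$ exactly as a geometric sum and passes to the limit $\frac{e^{\frac2\alpha t}-1}{\frac2\alpha t}$, so no error terms need to be controlled; you instead compute the $p$-th moments via $\Gamma(\frac2\alpha k+p)/\Gamma(\frac2\alpha k)\sim(\frac2\alpha k)^p$, recognize a Riemann sum, and invoke Fr\'echet--Shohat together with moment-determinacy of the compactly supported limit. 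Your route costs you the (correctly flagged and correctly handled) uniform control of the Gamma-ratio asymptotics and the extra determinacy step, but is more elementary and does not depend on the geometric sum closing up in exact form; the paper's route is shorter because the MGF computation is exact. Either way the continuous-mapping step back from $x^{\alpha}$ to $x$ is the same. No gaps.
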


\begin{proof}[Proof of Lemma \ref{lem:radial}]
 Let $\mu_n^{(\alpha)}$ be the expected empirical distribution of the points of $\mathcal X_n^{(\alpha)}$. Then 
\begin{align*}
\mu_n^{(\alpha)}(B)&=\frac{1}{n}\E[|\{k\suchthat z_k\in B, k=1,\ldots,n\}|]
=\frac{1}{n}\int_{B}\mathbb K_n^{(\alpha)}(z,z)dm(z),
\end{align*}
for any Borel set $B\subset \C$. Therefore we have 
$$
d\mu_n^{(\alpha)}(z)=\frac{1}{n}\mathbb K_n^{(\alpha)}(z,z)dm(z), \mbox{ for $z\in \C$}.
$$
Set $z=re^{i\theta}$. The density of $\mu_n^{(\alpha)}$ can be written as 
\begin{align}\label{eqn:densitylim}
f_{R,\Theta}(r,\theta)=\frac{\alpha}{2\pi n}e^{-r^{\alpha}}\sum_{k=0}^{n-1}\frac{r^{2k+1}}{\Gamma(\frac{2}{\alpha}(k+1))},
\end{align}
for $0< r <\infty$ and $0\le \theta\le 2\pi$. Therefore the density of the radial part is
$$
f_R(r)=\frac{\alpha}{n}e^{-r^{\alpha}}\sum_{k=0}^{n-1}\frac{r^{2k+1}}{\Gamma(\frac{2}{\alpha}(k+1))},
$$
for $0< r <\infty$. By change of variables formula, the density of $X_n=\frac{R^{\alpha
}}{n}$ is
$$
f_{X_n}(x)=e^{-nx}\sum_{k=0}^{n-1}\frac{(nx)^{\frac{2}{\alpha}(k+1))-1}}{\Gamma(\frac{2}{\alpha}(k+1))},
$$
for $0< x <\infty$. The moment generating function of $X_n$, for large $n$, is 
\begin{align*}
\E[e^{tX_n}]&=\sum_{k=0}^{n-1}\int_{0}^{\infty}\frac{(nx)^{\frac{2}{\alpha}(k+1))-1}}{\Gamma(\frac{2}{\alpha}(k+1))}\cdot e^{-(n-t)x}dx
\\&=\frac{1}{n-t}\sum_{k=0}^{n-1}\int_{0}^{\infty}\frac{(\frac{n}{n-t}x)^{\frac{2}{\alpha}(k+1))-1}}{\Gamma(\frac{2}{\alpha}(k+1))}\cdot e^{-x}dx
\\&=\frac{1}{n-t}\sum_{k=0}^{n-1}\left(\frac{n}{n-t}\right)^{\frac{2}{\alpha}(k+1)-1}
\\&=\frac{1}{n}\sum_{k=0}^{n-1}\left(1-\frac{t}{n}\right)^{-\frac{2}{\alpha}(k+1)}
\\&=\frac{1}{n}\cdot\frac{\left(1-\frac{t}{n}\right)^{-\frac{2}{\alpha}(n+1)}-\left(1-\frac{t}{n}\right)^{-\frac{2}{\alpha}}}{\left(1-\frac{t}{n}\right)^{-\frac{2}{\alpha}}-1},
\end{align*}
for $t>0$. Therefore, for $t>0$,
$$
\lim_{n\to \infty}\E[e^{tX_n}]=\frac{e^{\frac{2}{\alpha}t}-1}{\frac{2}{\alpha}t}
$$
is the moment generating function of $ S^{\alpha}= U[0,\frac{2}{\alpha}]$. The positive random variables $\frac{R^{\alpha}}{n}$ converges to $S^{\alpha}$ in distribution. Hence the result.
\end{proof}

\begin{proof}[Proof of Theorem \ref{thm:scalelim}]
The random variable $S^{\alpha}$ is uniformly distributed on $[0,\frac{2}{\alpha}]$, then the density of the random variable of $S$ is given by
$$
f_S(s)=\frac{\alpha^2}{2}s^{\alpha-1} \;\; \mbox{ for $0\le s\le \left(\frac{2}{\alpha}\right)^{\frac{1}{\alpha}}$}.
$$
Therefore the result follows from \eqref{eqn:densitylim} and Lemma \ref{lem:radial}.
\end{proof}
 
\section{Hole probabilities for $\X_n^{(g)}$}
In this section we calculate asymptotics of hole probabilities for $\X_n^{(g)}$, defined in Section \ref{sec:exdet}. In particular we get the hole probabilities for $\X_n^{(\alpha)}$. Recall that the set of points of $\mathcal X_n^{(g)}$ (with uniform order) has the density 
\begin{align}\label{eqn:gdensity}
\frac{1}{Z_n^{(g)}}\prod_{i<j}|z_i-z_j|^2e^{-n\sum_{k=1}^{n}g(|z_k|)},
\end{align}
with respect to the Lebesgue measure on $\C^n$, where $Z_n^{(g)}$ is the normalizing constant, i.e.,
\begin{align*}
Z_n^{(g)}=\int_{\C}\cdots \int_{\C}\prod_{i<j}|z_i-z_j|^2e^{-n\sum_{k=1}^{n}g(|z_k|)}\prod_{k=1}^ndm(z_k)
\end{align*}
Next two subsection we give upper bounds and lower bounds for the hole probabilities for $\X_n^{(g)}$.

\subsection{Upper bounds}
 The following theorem gives the upper bound for the hole probabilities.
\begin{theorem}\label{thm:upperbound}
Let $U$ be an open subset of $D(0,T)$, where $Tg'(T)=2$. Then 
\begin{align*}
\limsup_{n\to \infty}\frac{1}{n^2}\log\P[\mathcal X_n^{(g)}(U)=0]\le -R_{U}^{(g)}-\liminf_{n\to \infty}\frac{1}{n^2}\log Z_n^{(g)}.
\end{align*}
\end{theorem}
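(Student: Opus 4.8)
The plan is to start from the integral representation of the hole probability. Since the event $\{\X_n^{(g)}(U)=0\}$ forces all $n$ points of the process to lie in $U^c$, integrating the density \eqref{eqn:gdensity} over $(U^c)^n$ gives
$$
\P[\X_n^{(g)}(U)=0]=\frac{I_n}{Z_n^{(g)}},\qquad I_n:=\int_{U^c}\cdots\int_{U^c}\prod_{i<j}|z_i-z_j|^2e^{-n\sum_{k=1}^ng(|z_k|)}\prod_{k=1}^n dm(z_k).
$$
Taking logarithms and dividing by $n^2$, the statement reduces to proving $\limsup_{n\to\infty}\frac{1}{n^2}\log I_n\le -R_U^{(g)}$, after which the elementary inequality $\limsup_n(a_n-b_n)\le\limsup_n a_n-\liminf_n b_n$, applied with $a_n=\frac{1}{n^2}\log I_n$ and $b_n=\frac{1}{n^2}\log Z_n^{(g)}$, yields the theorem.

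To bound $I_n$ I would compare the integrand with the weighted Fekete quantity $\delta_n^\omega(U^c)$ for the admissible weight $\omega=e^{-g(|z|)/2}$ on the closed set $U^c$. The key is to split one power of the external field off the exponential: writing $e^{-n\sum_k g(|z_k|)}=e^{-(n-1)\sum_k g(|z_k|)}\cdot e^{-\sum_k g(|z_k|)}$ and noting that each coordinate $z_i$ occurs in exactly $n-1$ of the pairs $\{i,j\}$, one recognizes
$$
\prod_{i<j}|z_i-z_j|^2e^{-(n-1)\sum_k g(|z_k|)}=\left(\prod_{i<j}|z_i-z_j|\,\omega(z_i)\omega(z_j)\right)^2\le \delta_n^\omega(U^c)^{n(n-1)},
$$
the last step being the definition of $\delta_n^\omega$ as a supremum over $n$-point configurations in $U^c$. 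The leftover factor $e^{-\sum_k g(|z_k|)}$ then factorizes over the coordinates, so
$$
I_n\le\delta_n^\omega(U^c)^{n(n-1)}\left(\int_{U^c}e^{-g(|z|)}dm(z)\right)^n\le\delta_n^\omega(U^c)^{n(n-1)}\,(2\pi c_0)^n,
$$
where $c_0=\int_0^\infty re^{-g(r)}dr<\infty$ by the integrability condition on $g$.

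Taking $\frac{1}{n^2}\log$ of this bound gives $\frac{1}{n^2}\log I_n\le\frac{n(n-1)}{n^2}\log\delta_n^\omega(U^c)+\frac{1}{n}\log(2\pi c_0)$. Letting $n\to\infty$, the first coefficient tends to $1$, the correction $\frac{1}{n}\log(2\pi c_0)$ vanishes, and by \eqref{eqn:limit} applied to $E=U^c$ one has $\log\delta_n^\omega(U^c)\to -R_{U}^{(g)}$; hence $\limsup_n\frac{1}{n^2}\log I_n\le -R_U^{(g)}$, which together with the $\limsup$--$\liminf$ inequality above completes the argument.

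The main obstacle, and the only genuinely delicate point, is the power-matching in the second step: the integrand carries the weight $e^{-n\sum g}$ while the Fekete product naturally carries $e^{-(n-1)\sum g}$, so the argument hinges on peeling off exactly one copy of $e^{-\sum g(|z_k|)}$. This extra factor plays a double role: it simultaneously tames the unboundedness of $U^c$, making the residual integral finite through $c_0<\infty$, and contributes only $O(n)$ to $\log I_n$, so it disappears after division by $n^2$. The logarithmic singularity on the diagonal requires no separate treatment, since it is absorbed automatically into the supremum defining $\delta_n^\omega(U^c)$.
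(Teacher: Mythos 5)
Your proof is correct and follows essentially the same route as the paper: both split off one copy of $e^{-\sum_k g(|z_k|)}$ to recognize the weighted Fekete product $\bigl(\prod_{i<j}|z_i-z_j|\omega(z_i)\omega(z_j)\bigr)^2$ with $\omega=e^{-g(|\cdot|)/2}$, bound it by $\delta_n^{\omega}(U^c)^{n(n-1)}$, control the residual factor by $a^n$ with $a=\int_{U^c}e^{-g(|z|)}dm(z)<\infty$ (your $2\pi c_0$), and conclude via \eqref{eqn:limit}. No gaps.
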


\begin{proof}[Proof of Theorem \ref{thm:upperbound}]
Let $z_1,z_2,\ldots,z_n$ be the points of $\mathcal X_n^{(g)}$. Then from \eqref{eqn:gdensity} we have
\begin{align}\label{eqn:feket}
\P_n[\mathcal X_n^{(g)}(U)=0]&=\frac{1}{Z_n^{(g)}}\int_{U^c}\ldots \int_{U^c}
e^{-n\sum_{k=1}^ng(|z_k|)}\prod_{i<j}|z_i-z_j|^2\prod_{k=1}^n
dm(z_k)\nonumber
\\&=\frac{1}{Z_n^{(g)}}\int_{U^c}\ldots \int_{U^c}\left\{ \prod_{i<j}|z_i-z_j|\omega(z_i)\omega(z_j)\right\}^2\prod_{k=1}^ne^{-g(|z_k|)} dm(z_k),
\end{align}
where $\omega(z)=e^{-\frac{g(|z|)}{2}}$. Let  $z_1^*,z_2^*,\ldots,z_n^*$ be weighted Fekete points for $U^c$ with weight $\omega(z)$. Therefore we have
$$
\delta_n^{\omega}(U^c)=\left\{
\prod_{i<j}|z_i^*-z_j^*|\omega(z_i^*)\omega(z_j^*)\right\}^{\frac{2}{n(n-1)}}.
$$
Therefore from \eqref{eqn:feket}, we have
\begin{eqnarray*}
\P[\mathcal X_n^{(g)}(U)=0]&\le
&\frac{1}{Z_n^{(g)}}(\delta_n^{\omega}(U^c))^{n(n-1)}\prod_{k=1}^n\l(\int_{U^c}e^{-g(|z_k|)}dm(z_k)\r)
\\&\le &\frac{1}{Z_n^{(g)}}.a^n.(\delta_n^{\omega}(U^c))^{n(n-1)} ,
\end{eqnarray*}
where $a=\int_{U^c}e^{-g(|z|)}dm(z)$. The fourth condition on $g$, in Section \ref{sec:exdet}, implies that $a$ is finite. Hence 
\begin{align*}	
\limsup_{n\to \infty}\frac{1}{n^2}\log\P[\mathcal X_n^{(g)}(U)=0]\le \limsup_{n\to \infty}\frac{1}{n^2}\log (\delta_n^{\omega}(U^c))^{n(n-1)} -\liminf_{n\to \infty}\frac{1}{n^2}\log Z_n^{(g)}.
\end{align*} 
Therefore by \eqref{eqn:limit}, we get
\begin{align*}\label{inequality}
\limsup_{n\to \infty}\frac{1}{n^2}\log \P_n[\mathcal X_n^{(g)}(U)=0] &\le -\inf_{\mu\in \mathcal {P}(\C \backslash U)}R_{\mu}^{(g)}-\liminf_{n\to \infty}\frac{1}{n^2}\log Z_n^{(g)} 
\\&= -R_U^{(g)}-\liminf_{n\to \infty}\frac{1}{n^2}\log Z_n^{(g)}.
\end{align*}
Hence the upper bound.
 \end{proof}
 
\noindent Note that, by the same arguments it can be shown that 
 \begin{align*}
  Z_n^{(g)}\le a^n(\delta_n^{\omega}(\C))^{n(n-1)}, 
\end{align*} 
where $a=\int_{\C}e^{-g(|z|)}dm(z)$. Therefore by \eqref{eqn:limit}, we have 
\begin{equation}\label{limsup}
\limsup_{n\to \infty}\frac{1}{n^2}\log Z_n^{(g)}\le -\inf_{\mu\in \mathcal {P}(\C)}R_{\mu}^{(g)} =-R_{\emptyset}^{(g)}.
\end{equation}

\subsection{Lower bounds}
In this section we give the lower bounds of hole probabilities for $\mathcal X_n^{(g)}$ for two class of open sets $U$. Let $\mu_n:=\mu\big|_{U_n}$, restriction of $\mu$ (as in Theorem \ref{thm:diskgeneral}) on $U_n$. 

\begin{theorem}\label{thm:gholeprobability12}
Let $U\subset D(0,T)$ be an open set, where $Tg'(T)=2,$  such that 
there exists a sequence of open sets ${U_n}$ such that $\bar
U \subset U_n  \subseteq D(0,T)$ with $ U_{n+1}\subseteq U_n$ for all $n$ and $\bigcap U_n=\overline{U}$ and the balayage measure $\nu_n$
on $\partial U_n$ converges weakly to the balayage measure $\nu$ on
$\partial U$ with respect to $\mu_n$.
Then
$$
\liminf_{n\to
\infty}\frac{1}{n^2}\log\P[\X_n^{(g)}(U)=0]\ge -R_U^{(g)}-\limsup_{n\to \infty}\frac{1}{n^2}\log Z_n^{(g)}.
$$
\end{theorem}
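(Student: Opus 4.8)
The plan is to bound $\P[\X_N^{(g)}(U)=0]$ from below by restricting the integral in \eqref{eqn:feket} to a small neighbourhood of a good configuration, namely the weighted Fekete points of the slightly larger forbidden set $U_n^c$. (Here I write $N$ for the number of points, to keep it distinct from the index $n$ of the shrinking sets $U_n$.) Fix $n$ and set $d_n=\dist(\bar U, U_n^c)>0$, which is positive because $\bar U$ is a compact subset of the open set $U_n$. Let $z_1^*,\dots,z_N^*\in U_n^c$ be $N$-th weighted Fekete points for $U_n^c$ with weight $\omega=e^{-g(|z|)/2}$, and take $\epsilon_N=1/N$. For $N$ large, $\epsilon_N<d_n$, so each ball $B(z_k^*,\epsilon_N)$ avoids $\bar U$ and hence lies in $U^c$; consequently $\prod_k B(z_k^*,\epsilon_N)\subset (U^c)^N$ and, rewriting the integrand as in \eqref{eqn:feket},
$$
\P[\X_N^{(g)}(U)=0]\ \ge\ \frac{1}{Z_N^{(g)}}\int_{\prod_k B(z_k^*,\epsilon_N)}\prod_{i<j}|z_i-z_j|^2\prod_k e^{-Ng(|z_k|)}\,dm(z_k)=:\frac{I_N}{Z_N^{(g)}}.
$$

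Next I would estimate $\log I_N$ by Jensen's inequality. Writing $\lambda_k$ for the uniform probability measure on $B(z_k^*,\epsilon_N)$ and pulling $(\pi\epsilon_N^2)^N$ out front, concavity of the logarithm gives $\log I_N \ge N\log(\pi\epsilon_N^2)+2\sum_{i<j}J_{ij}-N\sum_k\int g(|z|)\,d\lambda_k$, where $J_{ij}=\iint\log|z-w|\,d\lambda_i(z)\,d\lambda_j(w)$. By Fact \ref{ft:fundamental} (the mean value property for the logarithmic potential of uniform measure on a disk), $J_{ij}=\log|z_i^*-z_j^*|$ whenever the two balls are disjoint. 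Substituting the Fekete identity $2\sum_{i<j}\log|z_i^*-z_j^*|=N(N-1)\log\delta_N^{\omega}(U_n^c)+(N-1)\sum_k g(|z_k^*|)$, and using continuity of $g$ so that $\int g\,d\lambda_k=g(|z_k^*|)+o(1)$ uniformly (the Fekete points lying in the fixed compact support of the equilibrium measure inside $\overline{D(0,T)}$ by admissibility of $\omega$), one finds after dividing by $N^2$ that every contribution except $\frac{N-1}{N}\log\delta_N^\omega(U_n^c)$ vanishes as $N\to\infty$. Then \eqref{eqn:limit} yields $\liminf_N \frac{1}{N^2}\log I_N\ge -R_{U_n}^{(g)}$, whence $\liminf_N \frac{1}{N^2}\log\P[\X_N^{(g)}(U)=0]\ge -R_{U_n}^{(g)}-\limsup_N\frac{1}{N^2}\log Z_N^{(g)}$.

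Finally I would let $n\to\infty$. By \eqref{eqn:genrobinconstant} applied to each $U_n\subseteq D(0,T)$ we have $R_{U_n}^{(g)}=R_\emptyset^{(g)}+\frac12\bigl[\int_{\partial U_n}g\,d\nu_n-\int_{U_n}g\,d\mu\bigr]$. The hypothesis $\nu_n\to\nu$ weakly, together with continuity and boundedness of $g$ on the fixed compact set $\overline{D(0,T)}$, gives $\int_{\partial U_n}g\,d\nu_n\to\int_{\partial U}g\,d\nu$; and $U_n\downarrow\overline U$ with $\mu$ absolutely continuous gives $\int_{U_n}g\,d\mu\to\int_U g\,d\mu$ by dominated convergence. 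Hence $R_{U_n}^{(g)}\to R_U^{(g)}$ by Theorem \ref{thm:generalsetting}, and the claimed lower bound follows.

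The main obstacle is the disjointness assumption used in the mean-value evaluation of $J_{ij}$: as $N\to\infty$ the $N$ Fekete points cluster, and on the part of $\partial U_n$ carrying the balayage mass they are spaced only $\sim 1/N$ apart, comparable to $\epsilon_N$, so the balls overlap. I would handle this through the universal lower bound $J_{ij}\ge \log\epsilon_N - C_0$ (valid for all pairs by scaling, with $C_0$ an absolute constant, the negative of the logarithmic energy of normalised area measure on the unit disc), which replaces $\log|z_i^*-z_j^*|$ for the overlapping pairs and produces an error bounded by a constant times the number of pairs with $|z_i^*-z_j^*|<2\epsilon_N$. The decisive quantitative input is therefore a regularity estimate for weighted Fekete points showing each point has only $O(1)$ neighbours within $2\epsilon_N$, so that the number of such pairs is $O(N)=o(N^2)$ and the correction disappears after normalisation by $N^2$.
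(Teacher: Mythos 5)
Your overall architecture (lower-bound the hole probability via a good configuration in $U_n^c$, then send $n\to\infty$ using the balayage hypothesis) is sound, and your final limiting step is exactly the paper's. But the core of your argument is genuinely different from the paper's proof of this theorem, and it has a gap you have correctly located but not closed. The paper's Lemma~\ref{thm:gholeprobability} applies Jensen's inequality to the integral against $\prod_k f(z_k)\,dm(z_k)$ for an \emph{arbitrary bounded density} $f$ supported at positive distance from $U$; the interaction term becomes $\iint\log|z_1-z_2|f(z_1)f(z_2)\,dm\,dm$, which is finite with no diagonal issue, so no separation estimate is ever needed. Truncation and mollification then extend the bound to every $\mu\in\mathcal A$, and taking $\mu$ to be the equilibrium measure of $U_m^c$ gives $-R_{U_m}^{(g)}$. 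Your version instead concentrates the configuration on balls of radius $\epsilon_N=1/N$ around the weighted Fekete points of $U_n^c$, which forces you to compare $\sum_{i<j}J_{ij}$ with $\sum_{i<j}\log|z_i^*-z_j^*|$ and hence to control the pairs with $|z_i^*-z_j^*|<2\epsilon_N$.

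That control is the gap. Your universal bound $J_{ij}\ge\log\epsilon_N-C_0$ is fine, and it costs only an absolute constant per overlapping pair; but without an upper bound of $o(N^2)$ on the \emph{number} of such pairs, the total error is $O(N^2)$ and survives the normalization, destroying the bound. No such count is available from the paper: Lemma~\ref{lem:feketedistance} requires the exterior ball condition \eqref{eqn:gcondition}, which is precisely \emph{not} assumed in Theorem~\ref{thm:gholeprobability12} (this theorem exists to handle sets like the cardioid that fail it), and even where it applies it gives only separation $\gtrsim n^{-3}$, far too weak to conclude that each point has $O(1)$ neighbours within $2/N$. The obvious repair --- shrink $\epsilon_N$ below the minimal Fekete separation so all balls are disjoint --- needs $\min_{i\neq j}|z_i^*-z_j^*|\ge e^{-o(N)}$ so that the volume factor $N\log(\pi\epsilon_N^2)$ stays $o(N^2)$; the trivial bound one gets from $\delta_N^{\omega}(U_n^c)\ge e^{-R_{U_n}^{(g)}}$ only yields separation $\ge e^{-cN^2}$, which is not enough. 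So as written the proof is incomplete, and the missing separation/counting estimate is a substantive input, not a routine verification; the paper's choice of a continuous trial measure is exactly what lets it bypass this difficulty.
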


\begin{remark} Examples of such open sets: 
\begin{enumerate}
\item It follows from Example \ref{ex:gdisk} and Example \ref{ex:gannulus} that disk and annulus satisfy the conditions of Theorem \ref{thm:gholeprobability12}.

\item If $g(t)=t^{\alpha}$. Then convex open sets in $D(0,(\frac{2}{\alpha})^{\frac{1}{\alpha}})$, which do not intersect unit circle, satisfy the conditions of Theorem \ref{thm:gholeprobability12}. More generally  note that if $U$ is
an open set containing origin such that $\bar U \subset aU$ for
all $a>1$, then the balayage measure $\nu_a$ on $\partial (aU)$ is
given in terms of the balayage measure $\nu$ on $\partial U$ as
 $\nu_a (B)=\frac{1}{a^{\alpha}}\nu(\frac{1}{a}B)$ for any measurable set $B\subset \C$. Therefore $\nu_a$ converges weakly to $\nu$ as $a\to 1$.

\item In particular for $\alpha=2$, the translations of the above sets  also
 satisfy the conditions  of Theorem \ref{thm:gholeprobability12}. So all the examples we
 considered in Section \ref{results} satisfy the conditions  of Theorem \ref{thm:gholeprobability12}.
 \end{enumerate}
\end{remark}

\noindent The following lemma will be used in the proof of Theorem \ref{thm:gholeprobability12}.
\begin{lemma}\label{thm:gholeprobability}
Let $U\subset D(0,T)$ be an open set, where $Tg'(T)=2$. Then
$$
\liminf_{n\to
\infty}\frac{1}{n^2}\log \P[\X_n^{(g)}( U)=0]\ge-\inf_{\mu\in
\mathcal A}R_{\mu}^{(g)}-\limsup_{n\to \infty}\frac{1}{n^2}\log Z_n^{(g)},
$$
where 
$\mathcal A=\{\mu\in \mathcal P(\mathbb C): \mbox {dist(Supp($\mu$),$\bar
U$)$>0$}\}$.
\end{lemma}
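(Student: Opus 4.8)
The plan is to establish, for each fixed $\mu\in\mathcal A$ with $R_\mu^{(g)}<\infty$, the lower bound $\liminf_{n\to\infty}\frac{1}{n^2}\log I_n\ge -R_\mu^{(g)}$, where $I_n:=\int_{U^c}\cdots\int_{U^c}e^{-n\sum_k g(|z_k|)}\prod_{i<j}|z_i-z_j|^2\prod_k dm(z_k)$ is the integral appearing in \eqref{eqn:feket}. Since $\frac{1}{n^2}\log\P[\X_n^{(g)}(U)=0]=\frac{1}{n^2}\log I_n-\frac{1}{n^2}\log Z_n^{(g)}$ and $\liminf(a_n-b_n)\ge\liminf a_n-\limsup b_n$, this yields $\liminf_n\frac{1}{n^2}\log\P[\X_n^{(g)}(U)=0]\ge -R_\mu^{(g)}-\limsup_n\frac{1}{n^2}\log Z_n^{(g)}$; as the left-hand side is independent of $\mu$, one may take the supremum over $\mathcal A$ and replace $-R_\mu^{(g)}$ by $-\inf_{\mu\in\mathcal A}R_\mu^{(g)}$ (measures with $R_\mu^{(g)}=\infty$ impose no constraint). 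First I would reduce to a $\mu$ having a bounded density and compact support $K\subset U^c$: writing $d:=\mathrm{dist}(\mathrm{Supp}(\mu),\bar U)>0$, convolution of $\mu$ with a smooth bump of radius $<d$ keeps the support within distance $d$ of $\mathrm{Supp}(\mu)$, hence still at positive distance from $\bar U$ (so the mollified measure remains in $\mathcal A$), while by continuity of $g$ on compact sets and the standard continuity of logarithmic energy under mollification, $R^{(g)}$ of the mollified measure converges to $R_\mu^{(g)}$. This is precisely where the defining property of $\mathcal A$ is used.

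Now assume $\mu$ has density bounded by $M$ and compact support $K\subset U^c$ with $\mathrm{dist}(K,\bar U)>0$. Partition $K$ into Borel cells $B_1,\dots,B_n$ with $\mu(B_i)=1/n$ and $\max_i\mathrm{diam}(B_i)\to0$; since the density is $\le M$, each cell has area $|B_i|\ge 1/(Mn)$. Because every $B_i\subset K\subset U^c$, I may shrink the domain of $I_n$ to the box $B_1\times\cdots\times B_n$ without increasing the integrand, so $I_n\ge\int_{B_1}\cdots\int_{B_n}e^{-n\sum_k g(|z_k|)}\prod_{i<j}|z_i-z_j|^2\prod_k dm(z_k)$. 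On $B_k$ I would bound $g(|z_k|)\le\sup_{B_k}g(|\cdot|)$ and pull the field factor out. For the remaining Vandermonde integral I would use the sub-mean-value property repeatedly: with all other variables fixed, $z_k\mapsto\prod_{i<j}|z_i-z_j|^2$ is the squared modulus of an entire function of $z_k$, hence subharmonic, so integrating $z_n,z_{n-1},\dots,z_1$ one at a time and using $\frac{1}{|B_k|}\int_{B_k}(\text{subharmonic})\,dm\ge(\text{value at the centre }c_k)$ gives $\int_{B_1}\cdots\int_{B_n}\prod_{i<j}|z_i-z_j|^2\prod_k dm(z_k)\ge\big(\prod_i|B_i|\big)\prod_{i<j}|c_i-c_j|^2$, where $c_i\in B_i$ are chosen centres.

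Taking $\frac{1}{n^2}\log$ of the resulting bound splits it into three pieces. The field term $-\frac{1}{n}\sum_k\sup_{B_k}g(|\cdot|)$ tends to $-\int g(|z|)\,d\mu(z)$ as the cells shrink and $\frac{1}{n}\sum_k\delta_{c_k}\to\mu$. The volume term $\frac{1}{n^2}\sum_i\log|B_i|$ lies between $-\frac{1}{n}\log(Mn)$ and $\frac{1}{n}\log|K|$ and hence tends to $0$. The interaction term $\frac{1}{n^2}\sum_{i\ne j}\log|c_i-c_j|$ should converge to $-I_\mu=\iint\log|z-w|\,d\mu(z)d\mu(w)$, so that the three contributions sum to $-I_\mu-\int g\,d\mu=-R_\mu^{(g)}$, as needed. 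I expect the interaction term to be the genuine obstacle: one cannot simply invoke weak convergence of $\frac{1}{n^2}\sum_{i\ne j}\delta_{(c_i,c_j)}$ to $\mu\otimes\mu$, because $\log|z-w|$ is unbounded below on the diagonal while we need a \emph{lower} bound, so the pairs of nearby centres must be controlled by hand. I would take the partition with cells of bounded eccentricity (possible since $|B_i|\ge 1/(Mn)$), so that each centre has only $O(1)$ others within any fixed multiple of $n^{-1/2}$; the close pairs then contribute at worst $O(n\log n)=o(n^2)$, while the well-separated pairs are handled by weak convergence against the bounded continuous truncations $\max(\log|z-w|,-M')$. Combined with the finiteness of $I_\mu$, this gives $\liminf_n\frac{1}{n^2}\sum_{i\ne j}\log|c_i-c_j|\ge -I_\mu$ and completes the proof.
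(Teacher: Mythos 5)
Your overall architecture matches the paper's: fix $\mu\in\mathcal A$ with $R_\mu^{(g)}<\infty$, reduce by mollification to a measure with bounded density compactly supported in $U^c$ (the paper uses convolution with the uniform measure $\sigma_\epsilon$ on a small disk and the sub-mean-value property of $\log|z|$ to get $R^{(g)}_{\mu*\sigma_\epsilon}\to R^{(g)}_\mu$, exactly as you propose), and then prove the bound for such measures. Where you genuinely diverge is in the core step. The paper inserts the factor $\prod_k f(z_k)/M\le 1$ into the integrand, so that the hole integral is bounded below by $M^{-n}\int e^{-n\sum_k g(|z_k|)}\prod_{i<j}|z_i-z_j|^2\prod_k f(z_k)\,dm(z_k)$, and then applies Jensen's inequality to the logarithm with respect to the probability measure $\prod_k f(z_k)\,dm(z_k)$; this yields $n(n-1)\iint\bigl(\log|z_1-z_2|-\tfrac{n}{n-1}g(|z_1|)\bigr)\,d\mu(z_1)d\mu(z_2)$ in one line, with no discretization and no near-diagonal analysis, since the diagonal is $\mu\otimes\mu$-null and $I_\mu<\infty$. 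Your route — equal-mass cells, subharmonicity of the Vandermonde in each variable, and Riemann-sum convergence of the discrete energy — is a legitimate classical alternative (it is essentially the transfinite-diameter/Fekete-style lower bound), but it is doing considerably more work than is needed here.

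Two steps of your sketch are not yet proofs. First, the inequality $\frac{1}{|B_k|}\int_{B_k}u\,dm\ge u(c_k)$ is \emph{not} valid for a subharmonic $u$ on an arbitrary cell $B_k$ containing $c_k$, even one of bounded eccentricity; the sub-mean-value property is a statement about disks centred at $c_k$. You must shrink each cell to an inscribed disk $D(c_k,\rho_k)$ with $\rho_k\gtrsim n^{-1/2}$ before averaging — harmless for the volume term, but it has to be said. Second, your control of the interaction term handles only pairs at distance $O(n^{-1/2})$ and then appeals to weak convergence against the truncation $\max(\log|z-w|,-M')$; the error $\sum_{i\ne j}\bigl(\max(\log|c_i-c_j|,-M')-\log|c_i-c_j|\bigr)$ is supported on \emph{all} pairs at distance below $e^{-M'}$, not just those at scale $n^{-1/2}$, and showing it is $o(n^2)$ uniformly requires counting pairs at each dyadic scale $2^{-k}$ (which the disjointness of the cells and $|B_i|\ge 1/(Mn)$ do give, roughly $O(Mn^2 4^{-k})$ pairs at scale $2^{-k}$). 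Both gaps are fillable with the ingredients you have already introduced, but they are precisely the technicalities the paper's Jensen argument is designed to bypass.
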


\noindent Assuming the lemma we proceed to prove Theorem \ref{thm:gholeprobability12}.

\begin{proof}[Proof of Theorem \ref{thm:gholeprobability12}]
Let $U,U_1,U_2,\ldots $ be open subsets of $D(0,T)$ satisfying conditions
of Theorem \ref{thm:gholeprobability12}. By Lemma
\ref{thm:gholeprobability}, we have
\begin{eqnarray*}
 \liminf_{n\to \infty}\frac{1}{n^2}\log\P[\X_n^{(g)}(U)=0]&\ge & -\inf_{\mu\in
\mathcal A}R_{\mu}^{(g)}-\limsup_{n\to \infty}\frac{1}{n^2}\log Z_n^{(g)},
\\& \ge &-\inf_{\mu\in \mathcal A_m}R_{\mu}^{(g)}-\limsup_{n\to \infty}\frac{1}{n^2}\log Z_n^{(g)},
\end{eqnarray*}
where
$\mathcal A=\{\mu\in \mathcal P(\mathbb C): \mbox {dist(Supp($\mu$),$\bar
U$)$>0$\}},$ $\mathcal A_m=\{\mu\in \mathcal P(\mathbb C):\mu(U_m)=0\}$. The last inequality follows from the facts that $\bar{U} \subset U_m$ and
  $\mathcal A_m\subset \mathcal A $. Again from Theorem
\ref{thm:generalsetting} we  have 
$$
R_{U_m}^{(g)}=R_{\emptyset}^{(g)}+\frac{1}{2}\left[\int_{\partial U_m}g(|z|)d\nu_m(z)-\int_{U_m}g(|z|)d\mu_m(z)\right],
$$
where $\nu_m$ is the balayage on $\partial U_m$ with respect to the measure $\mu_m$, restriction of $\mu$ (as in Theorem \ref{thm:diskgeneral}) on $U_m$. Since $U_m$ are monotone and converge to $U$, $\mu \big|_{U_m}$ converge weakly to $\mu\big|_{U}$. Again the  balayage measures $\nu_m$ converge weakly to the balayage measure $\nu$,  therefore $R_{U_m}^{(g)}$ converges to
$R_{U}^{(g)}$ as $m\to\infty$. Therefore we
have
\begin{eqnarray*}
\liminf_{n\to
\infty}\frac{1}{n^2}\log\P[\X_n^{(g)}(U)=0]
\ge -R_U^{(g)}-\limsup_{n\to \infty}\frac{1}{n^2}\log Z_n^{(g)}.
\end{eqnarray*}
Hence the result.
\end{proof}

\noindent It remains to prove Lemma \ref{thm:gholeprobability}.

\begin{proof}[Proof of Lemma \ref{thm:gholeprobability}]
From \eqref{eqn:gdensity}, the density of the set of points of $\X_n^{(g)}$, we have
\begin{eqnarray}\label{eqn:lower}
\P[\X_n^{(g)}(U)=0]&=&\frac{1}{Z_n^{(g)}}\int_{U^c}\ldots \int_{U^c}
e^{-n\sum_{k=1}^ng(|z_k|)}\prod_{i<j}|z_i-z_j|^2\prod_{k=1}^n
dm(z_k)\nonumber
\\&\ge &\frac{1}{Z_n^{(g)}}\int_{U^c}\ldots \int_{U^c}
e^{-n\sum_{k=1}^ng(|z_k|)}\prod_{i<j}|z_i-z_j|^2\prod_{k=1}^n\frac{f(z_k)}{M}
dm(z_k),\nonumber
\end{eqnarray}
where $f$ is a compactly supported  probability density function with support in  $U^c$ and uniformly bounded by $M$. Applying logarithm on both sides we have
\begin{eqnarray*}
&&\log \P[\X_n^{(g)}(U)=0]
\\&\ge & -\log (Z_n^{(g)} .M^n) + \log \l(\int_{U^c}\ldots \int_{U^c}
e^{-n\sum_{k=1}^ng(|z_k|)}\prod_{i<j}|z_i-z_j|^2\prod_{k=1}^n f(z_k)
dm(z_k)\r)
\\&\ge& -\log (Z_n^{(g)} .M^n) +  \int_{U^c}\ldots \int_{U^c}\log\l(
e^{-n\sum_{k=1}^ng(|z_k|)}\prod_{i<j}|z_i-z_j|^2\r)\prod_{k=1}^n f(z_k)
dm(z_k)
\\&=& -\log (Z_n^{(g)} .M^n)+n(n-1)\int_{U^c} \int_{U^c} (\log |z_1-z_2|-\frac{n}{n-1}g(|z_1|))\prod_{k=1}^2 f(z_k)dm(z_k),
\end{eqnarray*}
where the second inequality follows from Jensen's inequality.
Therefore by taking limits on both sides, we have
\begin{eqnarray}\label{eqn:bddsupport}
\liminf_{n\to \infty}\frac{1}{n^2}\log \P[\X_n^{(g)}(U)=0]&\ge &
-\limsup_{n\to \infty}\frac{1}{n^2}\log Z_n^{(g)} - R_{\mu}^{(g)}
\end{eqnarray}
for any  probability measure $\mu$ with density bounded and
compactly supported on $U^c$.

Let $\mu$ be probability measure with density $f$ compactly
supported on $U^c$.  Consider the sequence of measures with
bounded densities 
$$
d{\mu}_M(z)= \frac{f_M(z) dm(z)}{\int f_M(w)dm(w) },
$$ 
where $f_M(z)=\min\{f(z),M\}$. From monotone convergence theorem for positive and negative parts of logarithm, it follows (as positive part of logarithm is bounded) that
$$
\lim_{M \to \infty} \int_{U^c} \int_{U^c} \log|z_1-z_2|\prod_{i=1}^2 f_M(z_i)dm(z_i) = \int_{U^c} \int_{U^c} \log|z_1-z_2|\prod_{i=1}^2 f(z_i)dm(z_i).
$$ 
From monotone convergence theorem, it follows that $\lim_{M \to \infty} \int
f_M(w)dm(w)=1$ and since $g$ is continuous function, $\lim_{M \to \infty} \int
g(|z|)f_M(z)dm(w)=\int g(|z|)f(z)dm(w)$. Therefore 
$$
\lim_{M \to \infty} R_{\mu_{M}}^{(g)} = R_{\mu}^{(g)}.
$$ 
So \eqref{eqn:bddsupport} is true for any measure with density compactly supported on $U^c$.

Let $\mu$ be a probability measure with compact support at a
distance of at least $\delta$ from $U$. Then the convolution
$\mu*\sigma_{\epsilon} $,  where $\sigma_{\epsilon}$ is  uniform
probability measure  on  disk of radius $\epsilon$ around origin,
has density compactly supported in $U^c$, if $\epsilon$ is less
than $\delta$. We have
\begin{eqnarray*}
I_{\mu*\sigma_{\epsilon}}&=&\iint \log|z-w|d(\mu*\sigma_{\epsilon})(z)d(\mu*\sigma_{\epsilon})(w)
\\&=&\iint\iint\iint \log|z+\epsilon r_1 e^{i \theta_1}-w-\epsilon r_2 e^{i \theta_2}|\frac{r_1dr_1d\theta_1}{\pi}\frac{r_2dr_2d\theta_2}{\pi}d\mu(z)d\mu(w)
\\&&\mbox{(limits of $r_1,r_2$ are from $0$ to 1 and $\theta_1,\theta_2$ are from $0$ to $2\pi$ )}
\\&\ge &\iint \log |z-w| d\mu(z)d\mu(w),
\end{eqnarray*}
where the inequality follows from the repeated application of the mean value property of the subharmonic function $\log |z|$. And also we have
\begin{eqnarray*}
I_{\mu*\sigma_{\epsilon}}\le \iint \log [|z-w| +2\epsilon]d\mu(z)d\mu(w).
\end{eqnarray*}
Therefore, $\lim_{\epsilon\to 0}I_{\mu*\sigma_{\epsilon}}=I_{\mu} $ and hence $\lim_{\epsilon\to 0}R_{\mu*\sigma_{\epsilon}}^{(g)}=R_{\mu}^{(g)}.$

So, \eqref{eqn:bddsupport} is true for any probability measure with compact support
whose distance from $U$ is positive. Hence the required lower bound.
\end{proof}

\noindent Note that by the same arguments it can be shown that 
\begin{align}\label{eqn:liminf}
\liminf_{n\to \infty}\frac{1}{n^2}\log Z_n^{(g)}\ge -R_{\mu}^{(g)},
\end{align}
for all compactly supported probability measure $\mu$ on $\C$.

\begin{cor}\label{corollary}
Let $Z_n^{(g)}$  be the normalizing constant, as in \eqref{eqn:gdensity}. Then  
\begin{align*}
\lim_{n\to \infty}\frac{1}{n^2}\log Z_n^{(g)}= -R_{\emptyset}^{(g)},
\end{align*}
where $\emptyset$ denotes the empty set.
\end{cor}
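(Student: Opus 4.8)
The plan is to sandwich $\frac{1}{n^2}\log Z_n^{(g)}$ between matching upper and lower bounds that have already been recorded in the surrounding text, so that the limit exists and equals $-R_{\emptyset}^{(g)}$. Both bounds are byproducts of the arguments for the hole-probability theorems, so no new estimate is needed.

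For the upper bound I would simply quote \eqref{limsup}, which asserts
$$
\limsup_{n\to \infty}\frac{1}{n^2}\log Z_n^{(g)}\le -R_{\emptyset}^{(g)}.
$$
This was obtained by dominating the partition function by $a^n(\delta_n^{\omega}(\C))^{n(n-1)}$, where $a=\int_{\C}e^{-g(|z|)}dm(z)$ is finite, and then letting $n\to\infty$ via \eqref{eqn:limit}.

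For the lower bound I would use \eqref{eqn:liminf}, namely $\liminf_{n\to \infty}\frac{1}{n^2}\log Z_n^{(g)}\ge -R_{\mu}^{(g)}$, valid for every compactly supported probability measure $\mu$ on $\C$. Since this inequality holds uniformly in $\mu$, I take the supremum over $\mu$ on the right-hand side. By the very definition of the minimum energy,
$$
\sup_{\mu\in\mathcal P(\C)}\bigl(-R_{\mu}^{(g)}\bigr)=-\inf_{\mu\in\mathcal P(\C)}R_{\mu}^{(g)}=-R_{\emptyset}^{(g)},
$$
so that $\liminf_{n\to \infty}\frac{1}{n^2}\log Z_n^{(g)}\ge -R_{\emptyset}^{(g)}$. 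Combining the two bounds gives
$$
-R_{\emptyset}^{(g)}\le \liminf_{n\to \infty}\frac{1}{n^2}\log Z_n^{(g)}\le \limsup_{n\to \infty}\frac{1}{n^2}\log Z_n^{(g)}\le -R_{\emptyset}^{(g)},
$$
forcing equality throughout, whence the limit exists and equals $-R_{\emptyset}^{(g)}$.

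There is essentially no genuine obstacle, since the substantive work lives in \eqref{limsup} and \eqref{eqn:liminf}. The only point deserving a moment's attention is the passage from ``the bound holds for every $\mu$'' to the sharp constant: this is just the identity $\sup_\mu(-R_\mu^{(g)})=-\inf_\mu R_\mu^{(g)}=-R_{\emptyset}^{(g)}$, which holds purely by the definition of infimum and supremum and does not require that the infimum be attained. (It is in fact attained, by the equilibrium measure of Theorem \ref{thm:diskgeneral}, but this is not needed.) Thus the corollary follows immediately from the two one-sided estimates already in hand.
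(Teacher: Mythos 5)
Your proof is correct and is exactly the paper's argument: the paper's proof of this corollary is the one-line observation that it follows from \eqref{limsup} and \eqref{eqn:liminf}, which is precisely the sandwich you carry out. Your extra remark that the supremum step needs only the definition of the infimum, not its attainment, is a fair (and accurate) clarification of what the paper leaves implicit.
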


\begin{proof}[Proof of Corollary \ref{corollary}]
The proof is directly follows from \eqref{limsup} and \eqref{eqn:liminf}.
\end{proof}

The second class of sets $U$ we consider satisfy the exterior ball condition, i.e.,
there exists $\epsilon > 0$  such that for every $z\in \partial U$ there exists a 
$\eta\in U^c$ such that
\begin{eqnarray}\label{eqn:gcondition}
U^c\supset B(\eta,\epsilon)\;\; \mbox{and
}\; |z-\eta |= \epsilon.
\end{eqnarray}
 Note that all convex domains satisfy the
condition  \eqref{eqn:gcondition} wth any $\epsilon>0$. Annulus is not a convex domain but
it satisfies the condition \eqref{eqn:gcondition}. The following theorem gives hole probabilities for such open sets.

\begin{theorem}\label{thm:gholeprobability1}
Let $g'$ be bounded in $[0,T+1]$ and $U\subseteq D(0,T)$ be an open set satisfying  condition
\eqref{eqn:gcondition}. Then
$$
\lim_{n\to \infty}\frac{1}{n^2}\log\P[\X_n^{(g)}(U)=0]\ge -R_U^{(g)}-\limsup_{n\to \infty}\frac{1}{n^2}\log Z_n^{(g)}.
$$
\end{theorem}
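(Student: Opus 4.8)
The plan is to deduce the bound from Lemma \ref{thm:gholeprobability}, which already supplies
$$
\liminf_{n\to\infty}\frac{1}{n^2}\log\P[\X_n^{(g)}(U)=0]\ge -\inf_{\mu\in\mathcal A}R_{\mu}^{(g)}-\limsup_{n\to\infty}\frac{1}{n^2}\log Z_n^{(g)},
$$
where $\mathcal A=\{\mu\in\mathcal P(\C):\dist(\supp(\mu),\bar U)>0\}$. Since $\mathcal A\subset\mathcal P(U^c)$ and $R_U^{(g)}=\inf_{\mu\in\mathcal P(U^c)}R_{\mu}^{(g)}$, one trivially has $\inf_{\mu\in\mathcal A}R_{\mu}^{(g)}\ge R_U^{(g)}$, so the whole content reduces to the reverse inequality $\inf_{\mu\in\mathcal A}R_{\mu}^{(g)}\le R_U^{(g)}$. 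Equivalently, I must approximate the equilibrium measure $\nu=\mu_1+\nu_2$ for $U^c$ furnished by Theorem \ref{thm:generalsetting} (which satisfies $R_{\nu}^{(g)}=R_U^{(g)}$ and is supported on $\overline{D(0,T)}\setminus U$) by a family $\nu_t\in\mathcal A$ with $R_{\nu_t}^{(g)}\to R_U^{(g)}$ as $t\to0$.

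To construct such $\nu_t$, I would use the exterior ball condition \eqref{eqn:gcondition} to push the support of $\nu$ slightly off $\bar U$. For each $z\in\partial U$ fix $\eta(z)\in U^c$ with $|z-\eta(z)|=\epsilon$ and $B(\eta(z),\epsilon)\subset U^c$; since a ball is convex, the point $(1-t)z+t\eta(z)$ lies in the open ball $B(\eta(z),\epsilon)$ for $t\in(0,1]$, hence at distance at least $t\epsilon$ from $U$. I extend this pointwise shift to a map $\Phi_t\colon\C\to\C$ that is the identity outside a fixed neighborhood of $\partial U$ and, inside it, moves every point outward (in the direction supplied by the exterior balls, i.e.\ along $\nabla\dist(\cdot,U)$) by an amount comparable to $t$, so that $\Phi_t$ displaces points by at most $Ct$ and carries $\overline{D(0,T)}\setminus U$ into $\{\dist(\cdot,U)\ge ct\}\cap D(0,T+1)$. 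Setting $\nu_t=(\Phi_t)_*\nu$ then yields $\nu_t\in\mathcal A$ for all small $t>0$.

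It remains to show $R_{\nu_t}^{(g)}=I_{\nu_t}+\int g(|z|)\,d\nu_t\to R_{\nu}^{(g)}$. For the external field term, $\Phi_t$ moves each point by at most $Ct$ while keeping it in $D(0,T+1)$, so using that $g'$ is bounded on $[0,T+1]$ (this is exactly where that hypothesis enters, and why it was absent in Theorem \ref{thm:gholeprobability12}) one gets $\bigl|\int g\,d\nu_t-\int g\,d\nu\bigr|\le \|g'\|_{\infty,[0,T+1]}\,Ct\to0$. For the logarithmic energy the decisive point is to arrange $\Phi_t$ to be bi-Lipschitz with $(1-Ct)|z-w|\le|\Phi_tz-\Phi_tw|\le(1+Ct)|z-w|$; then $\bigl|\log|\Phi_tz-\Phi_tw|-\log|z-w|\bigr|\le\log\frac{1+Ct}{1-Ct}$ uniformly in $z\ne w$, and integrating against $d\nu\,d\nu$ gives $|I_{\nu_t}-I_{\nu}|\le\log\frac{1+Ct}{1-Ct}\to0$. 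Thus $R_{\nu_t}^{(g)}\to R_{\nu}^{(g)}=R_U^{(g)}$, which establishes $\inf_{\mu\in\mathcal A}R_{\mu}^{(g)}\le R_U^{(g)}$ and, via Lemma \ref{thm:gholeprobability}, the asserted lower bound; combining it with the matching upper bound of Theorem \ref{thm:upperbound} and with Corollary \ref{corollary} upgrades the $\liminf$ to the stated limit.

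The main obstacle is the construction of $\Phi_t$ with bi-Lipschitz constants tending to $1$. Pushing the mass of $\nu_2$ off $\partial U$ is easy \emph{pointwise}, but making the outward displacement depend on the base point in a globally bi-Lipschitz fashion forces me to control how fast the outward direction turns along $\partial U$, and this is precisely what the uniform exterior ball radius $\epsilon$ in \eqref{eqn:gcondition} provides: it bounds the boundary from outside and guarantees that $\dist(\cdot,U)$ has a regular, non-focusing gradient flow throughout the $\epsilon$-shell. Once this $1+O(t)$ bi-Lipschitz estimate is in hand, the two energy comparisons above are routine.
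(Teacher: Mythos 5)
Your reduction to Lemma \ref{thm:gholeprobability} is correctly set up, and it is a genuinely different route from the paper's: the paper proves this theorem directly with weighted Fekete points --- Lemma \ref{lem:feketedistance} gives a separation $\gtrsim n^{-3}$ between the Fekete points of $U^c$, and the hole probability is bounded below by integrating the density over small disjoint balls $U^c\cap B(z_\ell^*,Cn^{-4})$ around them, with condition \eqref{eqn:gcondition} guaranteeing each such set has area $\gtrsim n^{-8}$ and the bound on $g'$ controlling the weight there. However, your argument has a genuine gap at exactly the step you flag as ``the main obstacle'': the existence of a map $\Phi_t$ that pushes all of $\supp(\nu)$ to positive distance from $\bar U$, displaces points by $O(t)$, and is globally bi-Lipschitz with constants $1\pm Ct$. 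The justification you offer --- that \eqref{eqn:gcondition} makes the gradient flow of $\dist(\cdot,U)$ regular and non-focusing --- is not correct in the stated generality. The equilateral triangle and the half-disk (Examples \ref{ex:triangle} and \ref{ex:halfdisk}, which are among the sets this theorem is meant to cover) satisfy \eqref{eqn:gcondition}, and at a convex corner $v$ of such a $U$ the nearest-point direction on $U^c$ turns through a fixed angle (the normal cone at $v$) as one moves along an arc of radius $r$ about $v$; consequently the map ``push outward by $\asymp t$ along $\nabla\dist(\cdot,U)$'' stretches pairs of points at radius $r$ from $v$ by a factor $1+t/r$, which is not $1+O(t)$ as $r\to 0$. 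Conversely, \eqref{eqn:gcondition} also admits non-convex $U$ (e.g.\ two nearby convex components) for which the outward direction is genuinely discontinuous across the medial axis of $U^c$. So the uniform estimate $\bigl|\log|\Phi_t z-\Phi_t w|-\log|z-w|\bigr|\le\log\frac{1+Ct}{1-Ct}$, on which your entire energy comparison rests, is unproved and fails for the natural candidate maps. To repair it you would need either a cleverer, non-gradient construction of $\Phi_t$ near corners, or decay estimates for the balayage component $\nu_2$ near corners showing that the badly-stretched pairs contribute $o(1)$ to the energy; neither is supplied, and both are substantial.

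It is worth recording where your idea does work cleanly: if $U$ is star-shaped about $x_0$ (in particular convex), the dilation $\Phi_t(x)=x_0+(1+t)(x-x_0)$ maps $U^c$ into the complement of the enlarged set, is exactly $(1+t)$-bi-Lipschitz, shifts the logarithmic energy by $-\log(1+t)$, and moves points by at most $2Tt$, so boundedness of $g'$ on $[0,T+1]$ controls the field term. This yields the lower bound for convex $U$ even when $\bar U$ meets $\partial D(0,T)$, which is precisely the case Theorem \ref{thm:gholeprobability12} cannot reach. But \eqref{eqn:gcondition} neither implies nor is implied by star-shapedness (the annulus satisfies \eqref{eqn:gcondition} and is not star-shaped), so this observation does not close the gap for the theorem as stated.
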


Note that $g(r)=r^{\alpha}$ satisfies the condition of Theorem \ref{thm:gholeprobability1} for $\alpha\ge 1$, not for $\alpha<1$.
The above theorem does not include the cases of the cardioid or sectors with an obtuse angle at center. Theorem \ref{thm:gholeprobability12} takes care of these and all the other sets $U$ which can contain scaled-down copies of themselves. But Theorem \ref{thm:gholeprobability12}, unlike Theorem \ref{thm:gholeprobability1}, requires the boundary of $U$ to not intersect the boundary of $D(0,T)$.  The proof of Theorem \ref{thm:gholeprobability1} makes use of Fekete points, whereas that of Theorem  \ref{thm:gholeprobability12} makes use of the balayage measure.  The following lemma, which is used in the proof of Theorem \ref{thm:gholeprobability1}, provides separation between weighted Fekete points. This is not tightest separation result but suffices for our purpose. The separation of Fekete points has been studied by many authors, e.g., see \cite{ortega}, \cite{bos} and references therein.

\begin{lemma}\label{lem:feketedistance}
Let $g'$ be bounded in $[0,T+1]$ and $U\subseteq D(0,T)$ be an open set satisfying  condition
\eqref{eqn:gcondition}. If  $z_1^*,z_2^*,\ldots,z_n^*$ are the weighted
Fekete points for $U^c$ with weight $\omega(z)=e^{-\frac{g(|z|)}{2}}$, then for large $n$,
\begin{eqnarray*}\label{eqn:feketedistance}
\min\{|z_i^*-z_k^*|: 1\le i\neq k \le n\}\ge C.\frac{1}{n^3}
\end{eqnarray*}
for some constant $C$ (does not depend on $n$).
\end{lemma}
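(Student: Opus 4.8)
The plan is to exploit the single–point optimality of the weighted Fekete configuration and to average it against Jensen's formula (Fact~\ref{ft:fundamental}). First I would reduce to the closest pair: relabel so that $d:=|z_1^*-z_2^*|=\min_{i\ne k}|z_i^*-z_k^*|$, and record the preliminary fact that for large $n$ every weighted Fekete point lies in $\overline{D(0,T)}$ up to $o(1)$, hence in $\overline{D(0,T+1)}$ — this is where the hypothesis that $g'$ is bounded on $[0,T+1]$ enters, since it lets me control the external field under small perturbations. Because $\{z_1^*,\dots,z_n^*\}$ maximises $\prod_{i<j}|z_i-z_j|\,\omega(z_i)\omega(z_j)$ with $\omega=e^{-g(|z|)/2}$, the point $z_1^*$ maximises $z\mapsto \sum_{j\ge2}\log|z-z_j^*|-\tfrac{n-1}{2}g(|z|)$ over $z\in U^c$. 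Testing against a competitor $w\in U^c$ yields the master inequality
\[
\log\frac{|w-z_2^*|}{d}+\sum_{j\ge3}\log\frac{|w-z_j^*|}{|z_1^*-z_j^*|}\le \frac{n-1}{2}\big(g(|w|)-g(|z_1^*|)\big)\qquad\text{for all }w\in U^c.
\]

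Next I would integrate this inequality over $w$ on a circle $\partial D(z_1^*,s)$ of small radius $s\asymp 1/n$ (note $s>d$ once $d$ is as small as we are trying to exclude). By Jensen's formula the circular average of $\log|w-z_j^*|$ equals $\log\max(|z_1^*-z_j^*|,s)\ge\log|z_1^*-z_j^*|$, so the sum over the far points $j\ge3$ averages to something $\ge 0$; the closest–pair term averages to $\log(s/d)$; and, since $g'$ is bounded and $\big||w|-|z_1^*|\big|\le s$, the right–hand side averages to at most $\tfrac{n-1}{2}\,\|g'\|_\infty\,s=O(1)$. Hence $\log(s/d)\le O(1)$, i.e. $d\gtrsim 1/n$. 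This clean step is valid whenever the entire circle $\partial D(z_1^*,s)$ lies in $U^c$, in particular whenever $z_1^*$ is at distance $\ge s$ from $\partial U$.

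The remaining, and genuinely delicate, case is $z_1^*$ within $\asymp 1/n$ of $\partial U$, for then only part of the small circle is admissible. This is exactly where the exterior ball condition \eqref{eqn:gcondition} is used: it furnishes a full ball $B(\eta,\epsilon)\subset U^c$ tangent to $\partial U$ near $z_1^*$, so a definite portion of a small circle about $z_1^*$ (equivalently a full circle of radius $s$ concentric-in-spirit with $z_1^*$ but displaced a distance $O(s)$ into $B(\eta,\epsilon)$) still lies in $U^c$. Re-running the averaging on this truncated/displaced circle reproduces the gain $\log(s/d)$ and the $O(1)$ field term, but the sub–mean–value bound for the far points is no longer exact: the displacement and the loss of part of the circle introduce error terms that have to be absorbed by a crude packing estimate on the $d$–separated points $z_j^*$ (at most $\lesssim (\rho/d)^2$ of them in any disk of radius $\rho$). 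Optimising $s$ against these cruder error terms costs two further powers of $n$, and this is the source of the stated, deliberately non-sharp, exponent $d\ge C/n^3$.

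The main obstacle is precisely this boundary case: the exact cancellation supplied by Jensen's formula for a full concentric circle is destroyed once the averaging domain is truncated or recentred, so the far–point sum $\sum_{j\ge3}\log\big(|w-z_j^*|/|z_1^*-z_j^*|\big)$ can only be controlled through clustering/packing bounds rather than by a clean mean-value identity. It is for this reason, as the surrounding text emphasises, that the lemma settles for the weak separation $C/n^3$ rather than the optimal order $n^{-1/2}$; the weak bound is nonetheless exactly what is required in the proof of Theorem~\ref{thm:gholeprobability1}.
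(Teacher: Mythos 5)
Your reduction to the closest pair, the ``master inequality'' from single-point optimality of $z_1^*$, and the circular averaging via Fact~\ref{ft:fundamental} are all correct, and in the interior case (full circle $\partial D(z_1^*,s)\subset U^c$, $s\asymp 1/n$) they give a valid --- and in fact sharper --- bound $d\gtrsim 1/n$ by a route genuinely different from the paper's, which instead compares $|P(z_1^*)|$ with $\sup_{|\zeta-z_1^*|=2/n^2}|P(\zeta)|$ via the Cauchy integral formula. The problem is that the boundary case, which is the entire point of the lemma and the source of the exponent $3$, is not actually proved: it is asserted that the errors from truncating or recentring the averaging circle can be ``absorbed by a crude packing estimate,'' but this step fails as described. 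Once you average $\sum_{j\ge 3}\log\bigl(|w-z_j^*|/|z_1^*-z_j^*|\bigr)$ over an arc (or over a circle centred at a displaced point $\eta$), the clean identity of Jensen's formula is gone and you need a \emph{lower} bound on the average of $\log|w-z_j^*|$; this can be as negative as $\log s - C$ for each $z_j^*$ lying on or near the averaging set, and the only control on how many such points there are is the separation $d$ itself. A packing count gives up to $(s/d)^2$ such points, so the error term is of order $(s/d)^2\log(1/s)$ (and the displacement contributes a further $\sum_j s/|z_1^*-z_j^*|\lesssim s/d^2$ by the same dyadic count). In the regime $d\ll s$ that you are trying to exclude, these errors dominate the gain $\log(s/d)$, so the averaged inequality $\log(s/d)\le O(1)+\mathrm{Err}(s,d)$ becomes vacuous and no choice of $s$ rescues it: the argument is circular, since the error bound degenerates exactly as $d\to 0$.

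This is precisely the difficulty the paper's proof is built to avoid. It never takes averages of $\log|P|$ (which blow down to $-\infty$ at the zeros $z_j^*$); it only ever uses \emph{suprema} of $|P|$ over circles --- via the Cauchy integral formula for $P(z_1^*)-P(z_2^*)$ and via the Taylor expansion of $P$ about the exterior-ball centre $\eta$ with Cauchy estimates $|P^{(r)}(\eta)|/r!\le n^r|P(\eta^*)|$ --- and suprema of $|P|$ are insensitive to the location of its zeros. Each supremum is attained at a point ($\eta$, $\eta^*$) that condition \eqref{eqn:gcondition} guarantees lies in $U^c$, where the Fekete optimality of $z_1^*$ can be invoked; the factor $n$ lost in the Taylor/Cauchy step is what degrades the interior bound to $C/n^3$. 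To complete your proof you would either have to supply a quantitative substitute for the sub-mean-value inequality on arcs that does not degenerate with $d$ (I do not see one), or switch in the boundary case to a supremum-based comparison of the type the paper uses.
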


\noindent Assuming Lemma \ref{lem:feketedistance} we proceed to
prove  Theorem \ref{thm:gholeprobability1}.
\begin{proof}[Proof of Theorem \ref{thm:gholeprobability1}]
 Let
$z_1^*,z_2^*,\ldots,z_n^*$ be weighted Fekete points for $U^c$ with the weight function $\omega(z)=e^{-\frac{g(|z|)}{2}}$. Since the support of the Fekete points is contained in support of equilibrium measure (see
\cite{totikbook}, Chapter III Theorem 2.8), it follows that $|z_{\ell}^*|\le T$ for $\ell=1,2,\ldots, n$. Let $B_\ell=U^c\cap B(z_{\ell}^*,\frac{C}{n^4})$ for
$\ell=1,2,\ldots, n$. Then, for large $n$, we have
\begin{eqnarray*}\label{eqn:lower}
\P[\X_n^{(g)}(U)=0] &=&\frac{1}{Z_n^{(g)}}\int_{U^c}\ldots
\int_{U^c}
e^{-n\sum_{k=1}^ng(|z_k|)}\prod_{i<j}|z_i-z_j|^2\prod_{k=1}^n
dm(z_k)
\\&\ge &\frac{1}{Z_n^{(g)}}\int_{B_1}\ldots \int_{B_n}
\left\{
\prod_{i<j}|z_i-z_j|\omega(z_i)\omega(z_j)\right\}^2\prod_{k=1}^ne^{-g(|z_k|)}
dm(z_k),\nonumber
\\&\ge &\frac{e^{-g(T+1)n}}{Z_n^{(g)}}\int_{B_1}\ldots \int_{B_n}
\left\{ \prod_{i<j}|z_i-z_j|\omega(z_i)\omega(z_j)\right\}^2\prod_{k=1}^n dm(z_k).
\end{eqnarray*}
 By
Lemma \ref{lem:feketedistance}, for large $n$, we have $|z_i^*-z_j^*|\ge
\frac{C}{n^3}$ for $i\neq j$, for some constant $C$ independent of
$n$. Suppose $z_i\in B(z_i^*,\frac{C}{n^4})$ and $z_j\in
B(z_j^*,\frac{C}{n^4})$ for $i\neq j$, then for large $n$ 
\begin{eqnarray*}
|z_i-z_j|\ge |z_i^*-z_j^*|-\frac{2C}{n^4}\ge
|z_i^*-z_j^*|-\frac{2}{n}\cdot |z_i^*-z_j^*| \ge
|z_i^*-z_j^*|\l(1-\frac{2}{n}\r).
\end{eqnarray*}
Therefore we have
\begin{eqnarray*}
\P[\X_n^{(g)}(U)=0] &\ge & \frac{e^{-g(T+1)n}}{Z_n^{(g)}}\int_{B_1}\ldots
\int_{B_n}\left\{
\prod_{i<j}|z_i^*-z_j^*|\l(1-\frac{2}{n}\r)\omega(z_i)\omega(z_j)\right\}^2
\prod_{k=1}^n dm(z_k) 
\end{eqnarray*}
Since $g'$ is bounded on $[0,T+1]$, there exists a constant $K$ such that $|g(|z|)-g(|w|)|\le K.|z-w|$ for all $z,w\in D(0,T+1)$. Therefore for large $n$,
$$
e^{-\frac{1}{2}g(|z_i|)}\ge
e^{-\frac{1}{2}g(|z_i^*|)}.e^{-\frac{C'}{n^4}},
$$
for $z_i\in B(z_i^*,\frac{C}{n^4}), i=1,2,\ldots,n$, where $C'=C.K/2$. Hence for large $n$, we have
\begin{align*}
\P[\X_n^{(g)}(U)=0] \ge \frac{e^{-g(T+1)n}}{Z_n^{(g)}}\l(1-\frac{2}{n}\r)^{{n(n-1)}}.e^{-\frac{C'}{n^2}}.\left\{
\prod_{i<j}|z_i^*-z_j^*|\omega(z_i^*)\omega(z_j^*)\right\}^2\prod_{k=1}^n\int_{B_k}
dm(z_k)
\end{align*}
For large $n$, we have
$\int_{B_i} dm(z_i)\ge \pi.(\frac{ C}{2n^4})^2,
i=1,2,\ldots,n$ (condition \eqref{eqn:gcondition} implies that $B_i$ contains atleast a ball of radius $\frac{ C}{2n^4}$). Hence we have
$$
\P[\X_n^{(g)}(U)=0]\ge
\frac{e^{-g(T+1)n}}{Z_n^{(g)}}\l(1-\frac{2}{n}\r)^{{n(n-1)}}.e^{-\frac{C'}{n^2}}.(\delta_n^{\omega}(U^c))^{n(n-1)}.\l(\pi.\l(\frac{
C}{2n^4}\r)^2\r)^n.
$$
Therefore by \eqref{eqn:limit}, we have
\begin{align*}
\liminf_{n\to \infty}\frac{1}{n^2}\log\P[\X_n^{(g)}(U)=0]&\ge
-\inf_{\mu\in \mathcal {P}(\C \backslash U)}R_{\mu}^{(g)}-\limsup_{n\to \infty}\frac{1}{n^2}\log Z_n^{(g)} 
\\&= -R_U^{(g)}-\limsup_{n\to \infty}\frac{1}{n^2}\log Z_n^{(g)}.
\end{align*}
 Hence the result.
\end{proof}

It remains to prove  Lemma \ref{lem:feketedistance}.

\begin{proof}[Proof of Lemma \ref{lem:feketedistance}]
Let $P(z)=(z-z_2^*)\cdots (z-z_n^*)$. Now we show that
\begin{eqnarray*}\label{eqn:feketedist}
\min\{|z_1^*-z_k^*|: 2\le k \le n\}\ge C.\frac{1}{n^3}
\end{eqnarray*}
for some constant $C$. Suppose $|z_1^*-z_2^*|\le \frac{1}{n^2}$.
By Cauchy integral formula we have
\begin{eqnarray*}
|P(z_1^*)|&=&|P(z_1^*)-P(z_2^*)|
\\&=&\l|\frac{1}{2\pi i}\int_{|\zeta-z_1^*|=\frac{2}{n^2}}\frac{P(\zeta)}{(\zeta-z_1^*)}d\zeta-
\frac{1}{2\pi
i}\int_{|\zeta-z_1^*|=\frac{2}{n^2}}\frac{P(\zeta)}{(\zeta-z_2^*)}d\zeta\r|
\\&\le&
\frac{1}{2\pi}\int_{|\zeta-z_1^*|=\frac{2}{n^2}}\frac{|P(\zeta)||z_1^*-z_2^*|}{|\zeta-z_1^*||\zeta-z_2^*|}|d\zeta|
\\&\le
&\frac{1}{2\pi}.|P(\zeta^*)|.\frac{n^2}{2}.n^2.|z_1^*-z_2^*|.2\pi.\frac{2}{n^2},\;\;\;\;\mbox{(as $|\zeta-z_2^*|\ge \frac{1}{n^2}$)}
\end{eqnarray*}
where $\zeta^*\in \{\zeta:|\zeta-z_1^*|=\frac{2}{n^2}\}$ such that
$P(\zeta^*)=\sup\{|P(\zeta)|:|z_1^*-\zeta|=\frac{2}{n^2}\}$. Therefore we have
\begin{eqnarray}\label{eqn:z1}
|P(z_1^*)|\le n^2.|z_1^*-z_2^*|.|P(\zeta^*)|.
\end{eqnarray}
Since $g'$ is bounded on $[0,T]$, there exists a constant $K$ such that $|g(|z|)-g(|w|)|\le K.|z-w|$ for all $z,w\in D(0,T)$. Therefore, if $z,w\in D(0,T)$ and $|z-w|\le \frac{2}{n}$, then
\begin{eqnarray}\label{eqn:exp}
e^{-(n-1)\frac{g(|z|)}{2}}\le C_1.e^{-(n-1)\frac{g(|w|)}{2}},
\end{eqnarray}
where $C_1$ is a constant. Indeed, if $z,w\in D(0,T)$ and $|z-w|\le \frac{2}{n}$, we have
\begin{eqnarray*}
e^{-\frac{(n-1)}{2}(g(|z|)-g(|w|))}\le e^{\frac{(n-1)}{2}K.|z-w|}\le e^{\frac{(n-1)}{2}.K.\frac{2}{n}}= e^{K}.
\end{eqnarray*}

\noindent{\bf Case I:} Suppose $\zeta^*\in U^c$. Since  $z_1^*,z_2^*,\ldots,z_n^*$ are the Fekete points for $U^c$ with the weight function $\omega(z)=e^{-\frac{g(|z|)}{2}}$, then 
$$
|P(\zeta^*)|.e^{-(n-1)\frac{g(|\zeta^*|)}{2}}\le |P(z_1^*)|.e^{-(n-1)\frac{g(|z_1^*|)}{2}}.
$$
Then from \eqref{eqn:z1} and \eqref{eqn:exp} we get
\begin{eqnarray*}
|P(z_1^*)|e^{-(n-1)\frac{g(|z_1^*|)}{2}}&\le & n^2.
|z_1^*-z_2^*|.|P(\zeta^*)|.C_1.e^{-(n-1)\frac{g(|\zeta^*|)}{2}}
\\&\le& C_1.n^2. |z_1^*-z_2^*|.|P(z_1^*)|.e^{-(n-1)\frac{g(|z_1^*|)}{2}}.
\end{eqnarray*}
 And hence we get
\begin{eqnarray*}
|z_1^*-z_2^*|\ge \frac{1}{C_1.n^2}.
\end{eqnarray*}

\noindent{\bf Case II:} Suppose $\zeta^*\in U$. Therefore dist$(z_1^*,\partial U)=\inf \{|z-z_1^*|\; :\;z\in \partial U\}<\frac{2}{n^2}$. Choose large $n$ such that $\frac{1}{n}<\epsilon $. From the given
condition \eqref{eqn:gcondition} on $U$, we can choose $\eta\in U^c$ such that  $z_1^*\in \bar{B}(\eta,\frac{1}{n}) \subseteq U^c $. By taking the power series expansion of $P$ around $\eta$ and by triangle inequality, we
get
\begin{eqnarray}\label{eqn:zeta1}
\;\;\;\;|P(\zeta^*)|\le
|P(\eta)|+|\zeta^*-\eta|.\l|\frac{P^{(1)}(\eta)}{1!}\r|+\cdots+|\zeta^*-\eta|^{(n-1)}.\l|\frac{P^{(n-1)}(\eta)}{(n-1)!}\r|,
\end{eqnarray}
where $P^{(r)}(\cdot)$ denotes the $r$-th derivative of $P$. From
the Cauchy integral formula we have
$$
\l|\frac{P^{(r)}(\eta)}{r!}\r|\le
\frac{1}{2\pi}\int_{|z-\eta|=\frac{1}{n}}\frac{|P(z)|}{|z-\eta|^{r+1}}|dz|\le
|P(\eta^*)|.n^r,
$$
where $\eta^*\in \{z:|z-\eta|=\frac{1}{n}\}$ such that
$P(\eta^*)=\sup\{|P(z)|:|z-\eta|=\frac{1}{n}\}$. Note that
$|\zeta^*-\eta|\le |\zeta^*-z_1^*|+|z_1^*-\eta|\le
\frac{2}{n^2}+\frac{1}{n}$, therefore we have
$$
|\zeta^*-\eta|^{r}.\frac{|P^{(r)}(\eta)|}{r!}\le \l(1+\frac{2}{ n}\r)^r.|P(\eta^*)|\le e^{2}.|P(\eta^*)|,
$$
for $r=1,2,\ldots, n-1$. Therefore from \eqref{eqn:zeta1} we get
$$
|P(\zeta^*)|\le |P(\eta)|+e^{2}.n.|P(\eta^*)|.
$$
And hence from \eqref{eqn:z1} and \eqref{eqn:exp} we have
\begin{eqnarray*}
&&|P(z_1^*)|e^{-(n-1)\frac{g(|z_1^*|)}{2}}
\\&\le& n^2.|z_1^*-z_2^*|.C_1.\l(|P(\eta)|e^{-(n-1)\frac{g(|\eta|)}{2}}+n.e^{2}.|P(\eta^*)|e^{-(n-1)\frac{g(|\eta^*|)}{2}}\r)
\\&\le& n^2.|z_1^*-z_2^*|.C_1.\l(1+n.e^{2}\r)|P(z_1^*)|e^{-(n-1)\frac{g(|z_1^*|)}{2}},
\end{eqnarray*}
since $z_1^*,z_2^*,\ldots,z_n^*$ are the Fekete points for $U^c$ with weight $e^{-(n-1)\frac{g(|z|)}{2}}$ and $\eta,\eta^*\in U^c$. Therefore we get
\begin{eqnarray*}
|z_1^*-z_2^*|\ge \frac{1}{2.C_1.e^{2}n^3}.
\end{eqnarray*}
By Case I and Case II we get that if $|z_1^*-z_2^*|\le
\frac{1}{n^2}$, then $|z_1^*-z_2^*|\ge \frac{1}{2.C_1.e^{2}n^3}$. Similarly,
if $|z_1^*-z_k^*|\le \frac{1}{n^2}$ for $k=2,3,\ldots,n$, then
$|z_1^*-z_k^*|\ge \frac{1}{2.C_1.e^{2}n^3}$. Therefore we have
$$
\min\{|z_1^*-z_k^*|: k=2,3,\ldots,n\}\ge \frac{1}{2.C_1.e^{2}n^3}.
$$
Similarly it can be shown that $|z_{\ell}^*-z_k^*|\ge
\frac{1}{2.C_1.e^{2}n^3}$ for all $1\le \ell\neq k \le n$ and hence
$$
\min\{|z_{\ell}^*-z_k^*|:1\le  \ell\neq k \le n \}\ge
\frac{1}{2.C_1.e^{2}n^3}.
$$
Hence the result.
\end{proof}

\section{Hole probabilities for $\X_n^{(\alpha)}$}
In this section we derive the hole probabilities for $\X_n^{(\alpha)}$ from Theorem \ref{thm:upperbound}, Theorem \ref{thm:gholeprobability12}, Theorem \ref{thm:gholeprobability1} and Corollary \ref{corollary}. Recall, the notations: 
 \begin{align*}
 R_U^{(\alpha)}:=\inf_{\mu\in \mathcal P(U^c)}R_{\mu}^{(\alpha)}, \;\;\;\mbox{ where $R_{\mu}^{(\alpha)}=\iint\log \frac{1}{|z-w|}d\mu(z)d\mu(w)+\int |z|^{\alpha}d\mu(z)$}.
 \end{align*}
 Note that $R_U^{(\alpha)}$ is a abuse of the notation $R_U^{(g)}$. The first result is analogous to Theorem \ref{thm:gholeprobability12}.
\begin{theorem}\label{thm:allalpha}
Let $U$ be an open subset of $D(0,(\frac{2}{\alpha})^{\frac{1}{\alpha}})$ satisfying the conditions of Theorem \ref{thm:gholeprobability12}. Then 
\begin{align*}
\lim_{n\to \infty}\frac{1}{n^2}\log\P[\X_n^{(\alpha)}(n^{\frac{1}{\alpha}}U)=0]=R_{\emptyset}^{(\alpha)}-R_{U}^{(\alpha)},
\end{align*}
for all $\alpha>0$.
\end{theorem}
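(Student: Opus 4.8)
The plan is to reduce the statement about $\X_n^{(\alpha)}$ to the corresponding statement about $\X_n^{(g)}$ with $g(r)=r^{\alpha}$, and then simply to assemble the upper and lower bounds already established, together with the exact asymptotics of the normalizing constant.

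First I would exploit the scaling relation $\X_n^{(\alpha)}=n^{\frac{1}{\alpha}}\cdot\X_n^{(g)}$ recorded in Section \ref{sec:exdet}. Since scaling each point by $n^{\frac{1}{\alpha}}$ is a bijection, a point $z$ of $\X_n^{(g)}$ lies in $U$ precisely when the corresponding point $n^{\frac{1}{\alpha}}z$ of $\X_n^{(\alpha)}$ lies in $n^{\frac{1}{\alpha}}U$; hence $\X_n^{(\alpha)}(n^{\frac{1}{\alpha}}U)=\X_n^{(g)}(U)$ as random variables, and in particular
$$\P[\X_n^{(\alpha)}(n^{\frac{1}{\alpha}}U)=0]=\P[\X_n^{(g)}(U)=0].$$
For $g(r)=r^{\alpha}$ one has $g'(r)=\alpha r^{\alpha-1}$, so the defining equation $Tg'(T)=2$ gives $T=(\frac{2}{\alpha})^{\frac{1}{\alpha}}$; thus the hypothesis $U\subset D(0,(\frac{2}{\alpha})^{\frac{1}{\alpha}})$ is exactly $U\subset D(0,T)$, and by assumption $U$ meets the conditions of Theorem \ref{thm:gholeprobability12}.

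Next I would sandwich the limit. Theorem \ref{thm:upperbound} gives
$$\limsup_{n\to\infty}\frac{1}{n^2}\log\P[\X_n^{(g)}(U)=0]\le -R_U^{(g)}-\liminf_{n\to\infty}\frac{1}{n^2}\log Z_n^{(g)},$$
while Theorem \ref{thm:gholeprobability12} gives
$$\liminf_{n\to\infty}\frac{1}{n^2}\log\P[\X_n^{(g)}(U)=0]\ge -R_U^{(g)}-\limsup_{n\to\infty}\frac{1}{n^2}\log Z_n^{(g)}.$$
The key simplification is Corollary \ref{corollary}, which asserts that $\lim_{n\to\infty}\frac{1}{n^2}\log Z_n^{(g)}=-R_{\emptyset}^{(g)}$ exists; consequently the $\liminf$ and $\limsup$ of the $Z_n^{(g)}$ terms both equal $-R_{\emptyset}^{(g)}$, and the two displayed bounds collapse to the single value $R_{\emptyset}^{(g)}-R_U^{(g)}$. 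This forces the limit to exist and to equal this common value. Translating back to the $\alpha$-notation, since $g(r)=r^{\alpha}$ one has $R_U^{(g)}=R_U^{(\alpha)}$ and $R_{\emptyset}^{(g)}=R_{\emptyset}^{(\alpha)}$ by definition, yielding
$$\lim_{n\to\infty}\frac{1}{n^2}\log\P[\X_n^{(\alpha)}(n^{\frac{1}{\alpha}}U)=0]=R_{\emptyset}^{(\alpha)}-R_U^{(\alpha)}.$$
There is no serious obstacle remaining at this stage: the genuinely hard work lives in the earlier Fekete-point separation (Lemma \ref{lem:feketedistance}) and the balayage arguments behind Theorems \ref{thm:upperbound} and \ref{thm:gholeprobability12}. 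The only point demanding a moment's care is verifying that the hypotheses of Theorem \ref{thm:gholeprobability12} transfer cleanly under the rescaling, which is immediate once $T=(\frac{2}{\alpha})^{\frac{1}{\alpha}}$ is identified, and that the matching $\liminf$/$\limsup$ of $\frac{1}{n^2}\log Z_n^{(g)}$ indeed cancels to leave the single constant $R_{\emptyset}^{(\alpha)}$ on both sides.
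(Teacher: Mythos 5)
Your proposal is correct and follows essentially the same route as the paper: reduce to $\X_n^{(g)}$ with $g(r)=r^{\alpha}$ via the identity $\P[\X_n^{(\alpha)}(n^{\frac{1}{\alpha}}U)=0]=\P[\X_n^{(g)}(U)=0]$, identify $T=(\frac{2}{\alpha})^{\frac{1}{\alpha}}$, and combine Theorem \ref{thm:upperbound}, Theorem \ref{thm:gholeprobability12} and Corollary \ref{corollary}. The only cosmetic difference is that the paper verifies the reduction by writing out the density and the normalizing constant $Z_n^{(\alpha)}$ explicitly, whereas you invoke the scaling relation from Section \ref{sec:exdet} directly; both amount to the same computation.
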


\begin{proof}[Proof of Theorem \ref{thm:allalpha}]
The density of the points of $\X_n^{(\alpha)}$ gives 
\begin{align}
&\P[\X_n^{(\alpha)}(n^{\frac{1}{\alpha}}U)=0]\nonumber
\\=&\frac{\alpha^n}{n!( 2\pi)^n\prod_{k=0}^{n-1}\Gamma(\frac{2}{\alpha}(k+1))}\int_{(n^{\frac{1}{\alpha}}U)^c}\cdots \int_{(n^{\frac{1}{\alpha}}U)^c}e^{-\sum_{k=1}^{n}|z_k|^{\alpha}}\prod_{i<j}|z_i-z_j|^2\prod_{k=1}^{n}dm(z_k)\nonumber
\\=&\frac{1}{Z_n^{(\alpha)}}\int_{U^c}\cdots \int_{U^c}e^{-n\sum_{k=1}^{n}|z_k|^{\alpha}}\prod_{i<j}|z_i-z_j|^2\prod_{k=1}^{n}dm(z_k)\nonumber
\end{align} 
where  the normalizing constant 
$$
Z_n^{(\alpha)}=\int_{\C}\cdots \int_{\C}e^{-n\sum_{k=1}^{n}|z_k|^{\alpha}}\prod_{i<j}|z_i-z_j|^2\prod_{k=1}^{n}dm(z_k)=\left(\frac{n^{\frac{1}{\alpha}\cdot n(n+1)}\alpha^n}{n!( 2\pi)^n\prod_{k=0}^{n-1}\Gamma(\frac{2}{\alpha}(k+1))}\right)^{-1}.
$$
Therefore we have 
\begin{align}\label{eqn:galpha}
\P[\X_n^{(\alpha)}(n^{\frac{1}{\alpha}}U)=0]=\P[\X_n^{(g)}(U)=0],
\end{align}
 with $g(t)=t^{\alpha}$. The function $g(t)=t^{\alpha}$ gives $T=(\frac{2}{\alpha})^{\frac{1}{\alpha}}$, the solution of $tg'(t)=2$. Again $U$ satisfies  the condition of Theorem \ref{thm:gholeprobability12}. Therefore by Theorem \ref{thm:upperbound}, Theorem \ref{thm:gholeprobability12} and Corollary \ref{corollary}, we have
\begin{align*}
\lim_{r\to \infty}\frac{1}{n^2}\log\P[\X_n^{(\alpha)}(n^{\frac{1}{\alpha}}U)=0]=R_{\emptyset}^{(\alpha)}-R_{U}^{(\alpha)},
\end{align*}
for all $\alpha>0$.
\end{proof}
Next result is analogous to Theorem \ref{thm:gholeprobability1}.
\begin{theorem}\label{thm:alphagreaterthan1}
Let $U$ be an open subset of $D(0,(\frac{2}{\alpha})^{\frac{1}{\alpha}})$ satisfying the condition \eqref{eqn:gcondition}. Then for all $\alpha\ge 1$, 
\begin{align*}
\lim_{n\to \infty}\frac{1}{n^2}\log\P[\X_n^{(\alpha)}(n^{\frac{1}{\alpha}}U)=0]=R_{\emptyset}^{(\alpha)}-R_{U}^{(\alpha)}.
\end{align*}
\end{theorem}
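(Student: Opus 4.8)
The plan is to reduce the statement to the finite-ensemble result Theorem \ref{thm:gholeprobability1}, exactly as Theorem \ref{thm:allalpha} was reduced to Theorem \ref{thm:gholeprobability12}. First I would rewrite the hole probability for $\X_n^{(\alpha)}$ using the scaling identity behind \eqref{eqn:galpha}: starting from the density of $\X_n^{(\alpha)}$ and rescaling each coordinate by $n^{-1/\alpha}$ converts the integral over $(n^{1/\alpha}U)^c$ into an integral over $U^c$ against $e^{-n\sum_k|z_k|^\alpha}$, giving $\P[\X_n^{(\alpha)}(n^{1/\alpha}U)=0]=\P[\X_n^{(g)}(U)=0]$ with $g(t)=t^\alpha$. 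For this $g$ the equation $tg'(t)=2$ has the unique solution $T=(2/\alpha)^{1/\alpha}$, so the disk $D(0,(2/\alpha)^{1/\alpha})$ in the hypothesis is exactly $D(0,T)$.

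Next I would verify the hypotheses of Theorem \ref{thm:gholeprobability1}. The set $U$ satisfies the exterior ball condition \eqref{eqn:gcondition} by assumption, and for $g(t)=t^\alpha$ we have $g'(t)=\alpha t^{\alpha-1}$, which is bounded on $[0,T+1]$ precisely when $\alpha\ge 1$; this is the one place the restriction $\alpha\ge 1$ is used.

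With the hypotheses in place I would combine the two-sided estimates. Theorem \ref{thm:upperbound} yields
$$\limsup_{n\to\infty}\frac{1}{n^2}\log\P[\X_n^{(g)}(U)=0]\le -R_U^{(g)}-\liminf_{n\to\infty}\frac{1}{n^2}\log Z_n^{(g)},$$
and Theorem \ref{thm:gholeprobability1} yields
$$\liminf_{n\to\infty}\frac{1}{n^2}\log\P[\X_n^{(g)}(U)=0]\ge -R_U^{(g)}-\limsup_{n\to\infty}\frac{1}{n^2}\log Z_n^{(g)}.$$
By Corollary \ref{corollary} the sequence $\frac{1}{n^2}\log Z_n^{(g)}$ converges to $-R_\emptyset^{(g)}$, so both bounds collapse to $R_\emptyset^{(g)}-R_U^{(g)}$; the limit therefore exists and equals this common value. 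Since $g(t)=t^\alpha$, the value is $R_\emptyset^{(\alpha)}-R_U^{(\alpha)}$, and \eqref{eqn:galpha} transfers the conclusion back to $\X_n^{(\alpha)}$.

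The hard part is not in this theorem itself, which is essentially a corollary once Theorem \ref{thm:gholeprobability1} is available: all the genuine analysis sits in that lower bound and in the Fekete-point separation estimate of Lemma \ref{lem:feketedistance}. The only subtlety to watch is the boundedness of $g'$ near the origin, which is exactly what excludes $\alpha<1$ and sets this theorem apart from Theorem \ref{thm:allalpha}.
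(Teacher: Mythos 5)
Your proposal is correct and follows exactly the paper's own route: reduce via the scaling identity \eqref{eqn:galpha} to $\P[\X_n^{(g)}(U)=0]$ with $g(t)=t^\alpha$, observe that $T=(\frac{2}{\alpha})^{\frac{1}{\alpha}}$ and that $g'$ is bounded on $[0,T+1]$ precisely when $\alpha\ge 1$, and then combine Theorem \ref{thm:upperbound}, Theorem \ref{thm:gholeprobability1} and Corollary \ref{corollary}. Your closing remarks correctly identify both where the work really lies (Lemma \ref{lem:feketedistance} and the lower bound) and the exact role of the restriction $\alpha\ge 1$.
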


\begin{proof}[Proof of Theorem \ref{thm:alphagreaterthan1}]
From \eqref{eqn:galpha}, we have 
\begin{align*}
\P[\X_n^{(\alpha)}(n^{\frac{1}{\alpha}}U)=0]=\P[\X_n^{(g)}(U)=0],
\end{align*}
with $g(t)=t^{\alpha}$. Since  $g(t)=t^{\alpha}$ satisfies the condition of Theorem \ref{thm:gholeprobability1} when $\alpha\ge 1$ and  $U$ satisfies the condition \eqref{eqn:gcondition}. Therefore by Theorem \ref{thm:upperbound}, Theorem \ref{thm:gholeprobability1} and Corollary \ref{corollary}, we have
\begin{align*}
\lim_{n\to \infty}\frac{1}{n^2}\log\P[\X_n^{(\alpha)}(n^{\frac{1}{\alpha}}U)=0]=R_{\emptyset}^{(\alpha)}-R_{U}^{(\alpha)},
\end{align*}
for all $\alpha\ge 1$.
\end{proof}

As a corollary of Theorem \ref{thm:allalpha} and Theorem \ref{thm:alphagreaterthan1} we get asymptotics of the hole probabilities for finite Ginibre ensemble $\X_n^{(2)}$, proved in \cite{hole}.
 
 \begin{cor}
 Let $U$ be a open subset of $\;\D$ satisfying the conditions of Theorem \ref{thm:gholeprobability12} or  \eqref{eqn:gcondition}. Then 
 $$
 \lim_{n\to\infty}\frac{1}{n^2}\log\P[\X_n^{(2)}(\sqrt n U)=0]=R_{\emptyset}^{(2)}-R_U^{(2)}.
 $$
 \end{cor}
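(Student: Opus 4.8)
The plan is to obtain the corollary as the special case $\alpha = 2$ of the two hole-probability theorems already established for $\X_n^{(\alpha)}$, so that no genuinely new argument is needed; the work is entirely a bookkeeping of the parameter substitution. First I would record the two simplifications that occur when $\alpha = 2$: the radius of the equilibrium disk becomes $(\frac{2}{\alpha})^{1/\alpha} = (\tfrac{2}{2})^{1/2} = 1$, so that $D(0,(\frac{2}{\alpha})^{1/\alpha}) = \D$, and the scaling factor becomes $n^{1/\alpha} = \sqrt{n}$. Hence the events $\{\X_n^{(2)}(\sqrt{n}\,U) = 0\}$ appearing in the corollary are exactly the events $\{\X_n^{(\alpha)}(n^{1/\alpha}U) = 0\}$ with $\alpha = 2$, and the target constant $R_{\emptyset}^{(2)} - R_U^{(2)}$ is the $\alpha = 2$ instance of $R_{\emptyset}^{(\alpha)} - R_U^{(\alpha)}$.

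Next I would split into the two cases permitted by the hypothesis on $U$. If $U \subseteq \D$ satisfies the conditions of Theorem \ref{thm:gholeprobability12}, then, since $\D = D(0,(\frac{2}{\alpha})^{1/\alpha})$ for $\alpha = 2$, Theorem \ref{thm:allalpha} applies directly (it is valid for all $\alpha > 0$) and yields the claimed limit. If instead $U \subseteq \D$ satisfies the exterior ball condition \eqref{eqn:gcondition}, then I would invoke Theorem \ref{thm:alphagreaterthan1}; here one must check only that $\alpha = 2$ lies in the admissible range $\alpha \ge 1$ of that theorem, which it does. In either case the limit equals $R_{\emptyset}^{(2)} - R_U^{(2)}$, completing the proof.

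Since the result is a direct specialization, there is no substantial obstacle: the only point requiring care is to confirm that the value $\alpha = 2$ meets the hypotheses of whichever of the two theorems is being applied, in particular the constraint $\alpha \ge 1$ for Theorem \ref{thm:alphagreaterthan1}. All of the analytic content---the upper bound via weighted Fekete points, the lower bounds via balayage measures or Fekete separation, and the evaluation of $\lim_{n} n^{-2}\log Z_n^{(g)}$ supplied by Corollary \ref{corollary}---has already been carried out in the preceding theorems for general $\alpha$, and is simply inherited here.
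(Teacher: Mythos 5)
Your proposal is correct and is exactly how the paper obtains this corollary: it is stated as a direct specialization of Theorem \ref{thm:allalpha} and Theorem \ref{thm:alphagreaterthan1} to $\alpha=2$, with the same observations that $(\tfrac{2}{\alpha})^{1/\alpha}=1$ and $n^{1/\alpha}=\sqrt{n}$, and that $\alpha=2$ satisfies the constraint $\alpha\ge 1$ needed for the exterior-ball case. Nothing further is required.
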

\chapter{Determinantal point processes with infinitely many points }\label{ch:infinite}
In this chapter we calculate the hole probabilities for a family of determinantal point processes, $\X_{\infty}^{(\alpha)}$ for $\alpha>0$, in the complex plane with infinitely many points. In particular, $\alpha=2$ gives the hole probabilities for infinite Ginibre ensemble, proved in \cite{hole}.

Recall that for fixed $\alpha>0$, $\mathcal X_{\infty}^{(\alpha)}$ is a determinatal point process in the complex plane  with the kernel  $\mathbb K_{\infty}^{(\alpha)}(z,w)=\frac{\alpha}{2\pi}E_{\frac{2}{\alpha},\frac{2}{\alpha}}(z\bar w)e^{-\frac{|z|^{\alpha}}{2}-\frac{|w|^{\alpha}}{2}}$ with respect to the Lebesgue measure on the complex plane, where $E_{a,b}(z)$ denotes the Mittag-Leffler function (for the details of Mittag-Leffler function see \cite{mittag} and references therein), an entire function when $a>0$ and $b>0$, defined by
$$
E_{a,b}(z)=\sum_{k=0}^{\infty}\frac{z^k}{\Gamma(ak+b)}.
$$
Note that, for $\alpha=2,$ $\mathcal X_{\infty}^{(2)}$ is the infinite Ginibre ensemble.  First we calculate the asymptotics of the hole probabilities for the scaled unit disk and scaled annulus, using Corollary \ref{cor:radialdistribution}, as the scaling factor goes to infinity. Then we calculate the asymptotics of the hole probabilities for general scaled open sets, using potential theory techniques.

\section{Hole probabilities for $\X_{\infty}^{(\alpha)}$ in circular domains}\label{sec:cir}
In this section we calculate the asymptotics of the hole probabilities for $\X_{\infty}^{(\alpha)}$ in circular domains, e.g., disk and annulus. The following result gives the asymptotic of the hole probability in scaled unit disk.

\begin{theorem}\label{thm:holedisk}
Let $r\D:= \{rz\;|\;z \in \D\}$, where $\D$ is open unit disk. Then we have 
$$
\lim_{r\to \infty}\frac{1}{r^{2\alpha}}\log\P[\mathcal X_{\infty}^{(\alpha)}(r\D)=0]=-\frac{\alpha}{2}\cdot\frac{1}{4}.
$$
\end{theorem}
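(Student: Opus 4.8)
The key tool is Corollary~\ref{cor:radialdistribution}, which tells us that the set of absolute values of the points of $\X_{\infty}^{(\alpha)}$ has the same distribution as $\{R_1, R_2, \ldots\}$, where the $R_k$ are independent and $R_k^{\alpha} \sim \mbox{Gamma}(\frac{2}{\alpha}k, 1)$. The plan is to exploit the rotational symmetry of the disk: since $r\D$ is a disk centered at the origin, the event $\{\X_{\infty}^{(\alpha)}(r\D) = 0\}$ is exactly the event that none of the absolute values $R_k$ fall below $r$. By independence this factorizes, so
$$
\P[\X_{\infty}^{(\alpha)}(r\D) = 0] = \prod_{k=1}^{\infty} \P[R_k \ge r] = \prod_{k=1}^{\infty} \P[R_k^{\alpha} \ge r^{\alpha}].
$$
Taking logarithms converts the product into a sum, $\log \P[\X_{\infty}^{(\alpha)}(r\D) = 0] = \sum_{k=1}^{\infty} \log \P[R_k^{\alpha} \ge r^{\alpha}]$, and the whole problem reduces to estimating the upper tail of a Gamma random variable and summing over $k$.

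First I would analyze a single factor. Writing $G_k = R_k^{\alpha} \sim \mbox{Gamma}(\frac{2}{\alpha}k, 1)$, I need the asymptotics of $\log \P[G_k \ge s]$ with $s = r^{\alpha}$. The qualitative picture is that a term contributes negligibly when the shape parameter $\frac{2}{\alpha}k$ is much larger than $s$ (the mean of $G_k$ is $\frac{2}{\alpha}k$, so the event is typical and the probability is close to $1$), while it contributes a genuine exponential cost when $\frac{2}{\alpha}k$ is smaller than $s$ (then we are asking $G_k$ to exceed its mean substantially, an atypical large-deviation event). The crossover happens around $k \approx \frac{\alpha}{2}s = \frac{\alpha}{2}r^{\alpha}$. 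For the dominant terms I would use the standard sharp tail estimate: for $\mbox{Gamma}(a,1)$ with $s \ge a$, one has $\P[G \ge s] \asymp s^{a-1}e^{-s}/\Gamma(a)$, so that $\log \P[G_k \ge s] = -s + (\frac{2}{\alpha}k - 1)\log s - \log \Gamma(\frac{2}{\alpha}k) + o(\cdot)$. Using Stirling's formula for $\log\Gamma(\frac{2}{\alpha}k)$, each such term is well approximated by a smooth function of $k$.

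Summing over $k$ and passing to the limit, I would recognize the sum $\frac{1}{r^{2\alpha}}\sum_k \log\P[G_k \ge r^{\alpha}]$ as a Riemann sum. Substituting $k = \frac{\alpha}{2}r^{\alpha}u$ so that $dk = \frac{\alpha}{2}r^{\alpha}\,du$, the summand becomes $r^{\alpha}$ times a function of $u$, and after dividing by $r^{2\alpha}$ the prefactor $\frac{1}{r^{\alpha}}$ combines with the step size to give $\frac{\alpha}{2}\int_0^1 \phi(u)\,du$ for an explicit integrand $\phi$ coming from the rate function $s - a + a\log(a/s)$ evaluated at $a = \frac{2}{\alpha}k$, $s = r^{\alpha}$. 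The integral evaluates to the constant $\frac{1}{4}$, producing the claimed limit $-\frac{\alpha}{2}\cdot\frac{1}{4}$. Alternatively, and perhaps more cleanly, I would observe that by the scaling relation connecting $\X_{\infty}^{(\alpha)}$ to the Ginibre-type processes and Theorem~\ref{thm:allalpha}, one expects the answer to be $-(R_{\emptyset}^{(\alpha)} - R_{\D}^{(\alpha)})$; from Theorem~\ref{thm:diskgeneral} and Remark~\ref{re:constant}(1) with $a = 1$ (the unit disk sits inside $D(0,(\frac{2}{\alpha})^{1/\alpha})$ for all $\alpha > 0$), we have $R_{\D}^{(\alpha)} - R_{\emptyset}^{(\alpha)} = \frac{\alpha}{2}\cdot\frac{1}{4}$, which matches.

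\medskip
\noindent\textbf{Main obstacle.} The delicate point is making the tail estimate and the Riemann-sum passage rigorous uniformly in $k$. The naive single-factor asymptotic $\P[G_k \ge s] \asymp s^{a-1}e^{-s}/\Gamma(a)$ is only sharp in the regime $s \ge a$; near and beyond the crossover $k \approx \frac{\alpha}{2}r^{\alpha}$ the terms transition from exponentially small to nearly $1$, and I must control the tail of the infinite sum (the terms with $k$ large, where $\log\P[G_k \ge r^{\alpha}]$ is close to $0$ but summed over infinitely many $k$) to ensure it is genuinely $o(r^{2\alpha})$ and does not corrupt the limit. Concretely I would split the sum at the crossover, apply the sharp Gamma upper-tail bound below it and a separate estimate (e.g.\ via Chernoff or the Gamma concentration inequality) showing the contribution above it is lower order, and then verify the dominated-convergence / Riemann-sum convergence for the main block. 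Handling both the upper and lower bounds with matching constants, and confirming uniform integrability of the discretized rate function, is where the real work lies.
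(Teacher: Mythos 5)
Your proposal follows essentially the same route as the paper: it invokes Corollary~\ref{cor:radialdistribution} to factorize the hole probability into independent Gamma upper-tail probabilities, truncates/splits the product at the crossover $k\approx\frac{\alpha}{2}r^{\alpha}$, uses a Chernoff/sharp-tail bound with Stirling's formula in the main block and a separate estimate (CLT and Cram\'er's bound in the paper) to show the remaining factors contribute $o(r^{2\alpha})$, and evaluates the resulting Riemann sum as $\frac{\alpha}{2}\int_0^1(1-x+x\log x)\,dx=\frac{\alpha}{2}\cdot\frac{1}{4}$. This matches the paper's argument, including the consistency check against $R_{\D}^{(\alpha)}-R_{\emptyset}^{(\alpha)}$ from Remark~\ref{re:constant}, so no further comment is needed.
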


\noindent In particular for $\alpha=2$, we get the hole probability (see \cite{manjubook}, Proposition 7.2.1) for the infinite Ginibre ensemble,
$$
\lim_{r\to \infty}\frac{1}{r^4}\log\P[\mathcal X_{\infty}^{(2)}(r\D)=0]=-\frac{1}{4}.
$$

\begin{proof}[Proof of the Theorem \ref{thm:holedisk}]
 Corollary \ref{cor:radialdistribution} implies that 
$$
\P[\mathcal X_{\infty}^{(\alpha)}(r\D)=0]=\prod_{k=1}^{\infty}\P[R_k^{\alpha}>r^{\alpha}],
$$
where $R_k^{\alpha}\sim$Gamma$(\frac{2}{\alpha}k,1)$.

\noindent{\bf Upper bound:} For $\theta<1$, we have 
$$
\P[R_k^{\alpha}>r^{\alpha}]\le e^{-\theta r^{\alpha}}\E[e^{\theta R_k^{\alpha}}]= e^{-\theta r^{\alpha}}(1-\theta)^{-\frac{2k}{\alpha}}.
$$
For $k<\frac{\alpha}{2}r^{\alpha}$, the bound is optimized for $\theta=1-\frac{2k}{\alpha r^{\alpha}}$ and 
\begin{align}\label{eqn:gup}
\P[R_k^{\alpha}>r^{\alpha}]\le e^{-\left\{(1-\frac{2k}{\alpha r^{\alpha}}) r^{\alpha}+\frac{2k}{\alpha}\log \frac{2k}{\alpha r^{\alpha}}\right\}}.
\end{align}
Therefore (we write as if $\frac{\alpha}{2}r^{\alpha}$ is an integer, it does not make any difference in asymptotics as $r\to \infty$) we have 
\begin{eqnarray*}
\P[\mathcal X_{\infty}^{(\alpha)}(r\D)=0]\le \prod_{k=1}^{\frac{\alpha}{2}r^{\alpha}}\P[R_k^{\alpha}>r^{\alpha}]
&\le &\prod_{k=1}^{\frac{\alpha }{2}r^{\alpha}}e^{-\left\{(1-\frac{2k}{\alpha r^{\alpha}}) r^{\alpha}+\frac{2k}{\alpha}\log \frac{2k}{\alpha r^{\alpha}}\right\}}.
\end{eqnarray*}
Therefore we have 
\begin{eqnarray*}
\limsup_{r\to \infty}\frac{1}{r^{2\alpha}}\log\P[\mathcal X_{\infty}^{(\alpha)}(r\D)=0]&\le &-\liminf_{r\to \infty}\frac{1}{r^{2\alpha}}\sum_{k=1}^{\frac{\alpha}{2}r^{\alpha}}\left\{\left(1-\frac{2k}{\alpha r^{\alpha}}\right) r^{\alpha}+\frac{2k}{\alpha}\log \frac{2k}{\alpha r^{\alpha}}\right\}
\\&=&-\frac{\alpha}{2}\liminf_{r\to \infty}\frac{2}{\alpha r^{\alpha}}\sum_{k=1}^{\frac{\alpha }{2}r^{\alpha}}\left\{\left(1-\frac{2k}{\alpha r^{\alpha}}\right) +\frac{2k}{\alpha r^{\alpha}}\log \frac{2k}{\alpha r^{\alpha}}\right\}
\\&=&-\frac{\alpha}{2}\int_{0}^{1}(1-x+x\log x)dx
\\&=&-\frac{\alpha}{2}\cdot \frac{1}{4}.
\end{eqnarray*}

\noindent{\bf Lower bound:} We have 
{\footnotesize\begin{align}\label{eqn:low}
\P[\mathcal X_{\infty}^{(\alpha)}(r\D)=0]
=&\prod_{k=1}^{\log r}\P[R_k^{\alpha}>r^{\alpha}]\prod_{k=\log r+1}^{\frac{\alpha}{2}r^{\alpha}}\P[R_k^{\alpha}>r^{\alpha}]\prod_{k=\frac{\alpha}{2}r^{\alpha}+1}^{\alpha r^{\alpha}}\P[R_k^{\alpha}>r^{\alpha}]\prod_{k={\alpha}r^{\alpha}+1}^{\infty}\P[R_k^{\alpha}>r^{\alpha}].
\end{align}}

Since $R_k^{\alpha}\sim \mbox{Gamma}(\frac{2}{\alpha}k,1)$, we can write $R_k^{\alpha}\stackrel{d}{=}X_1+X_2+\cdots+X_k$, where $X_1,X_2,\ldots,X_k$ are i.i.d. and Gamma$(\frac{2}{\alpha},1)$ distributed. Therefore we have 
$$
\P[R_k^{\alpha}>r^{\alpha}]\ge \P[X_1>r^{\alpha}]\ge\frac{1}{\Gamma(2/\alpha)}e^{-r^{\alpha}}r^{2-\alpha}.
$$
 Therefore we have 
\begin{equation}\label{eqn:1alpha}
\liminf_{r\to \infty}\frac{1}{r^{2\alpha}}\log \prod_{k=1}^{\log r}\P[R_k^{\alpha}>r^{\alpha}]\ge 0.
\end{equation}
The Stirling's formula for Gamma function implies that $\log \Gamma(t+1)\le \log(\sqrt{2\pi t})+t\log t -t +1 $ for large $t$. If $\log r\le k \le \frac{\alpha}{2}r^{\alpha} $, then for large $r$ 
\begin{eqnarray*}
\P[R_k^{\alpha}>r^{\alpha}]&\ge & \frac{1}{\Gamma(\frac{2}{\alpha}k)}e^{-r^\alpha}r^{\alpha(\frac{2}{\alpha}k-1)}
\\&\ge &e^{- \{r^\alpha-(\frac{2}{\alpha}k-1)\log r^{\alpha}+\log \Gamma(\frac{2}{\alpha}k+1)\}}
\\&\ge &e^{- \{r^\alpha -  \frac{2}{\alpha}k\log r^{\alpha}+\log r^{\alpha}+\log(\sqrt{2\pi.\frac{2}{\alpha}k})+\frac{2}{\alpha}k\log(\frac{2}{\alpha}k)-\frac{2}{\alpha}k+1\}}
\\&\ge &e^{- \{r^\alpha +\log r^{\alpha}+\log(\sqrt{2\pi.r^{\alpha}})+\frac{2}{\alpha}k\log(\frac{2k}{\alpha r^{\alpha}})-\frac{2}{\alpha}k+1\}}
\end{eqnarray*}
Therefore we have 
\begin{eqnarray}\label{eqn:2alpha}
&&\liminf_{r\to \infty}\frac{1}{r^{2\alpha}}\log \prod_{k=\log r}^{\frac{\alpha}{2} r^{\alpha}}\P[R_k^{\alpha}>r^{\alpha}]\nonumber
\\&\ge &-\limsup_{r\to \infty}\frac{1}{r^{2\alpha}}\sum_{k=\log r}^{\frac{\alpha}{2}r^{\alpha}}\left\{r^\alpha +\log r^{\alpha}+\log(\sqrt{2\pi.r^{\alpha}})+\frac{2}{\alpha}k\log\left(\frac{2k}{\alpha r^{\alpha}}\right)-\frac{2}{\alpha}k+1\right\}\nonumber
\\&=&-\frac{\alpha}{2}\cdot\limsup_{r\to \infty}\frac{2}{\alpha r^{\alpha}}\sum_{k=\log r}^{\frac{\alpha}{2}r^{\alpha}}\left\{1 +\frac{2k}{\alpha r^{\alpha}}\log\left(\frac{2k}{\alpha r^{\alpha}}\right)-\frac{2k}{\alpha r^{\alpha}}\right\}\nonumber
\\&= &-\frac{\alpha}{2}\cdot \int_{0}^1(1+x\log x-x)dx\nonumber
\\&=&-\frac{\alpha}{2}\cdot \frac{1}{4}.
\end{eqnarray}
For $\frac{\alpha}{2}r^{\alpha}\le k \le \alpha r^{\alpha}$, we have $\P[R_k^{\alpha}>r^{\alpha}]\ge \frac{1}{4}$ for large enough $r$. Indeed, by central limit theorem,
$$
\P[R_k^{\alpha}>r^{\alpha}]\ge \P\left[\frac{R_k^{\alpha}-\frac{2}{\alpha}k}{\sqrt{\frac{2}{\alpha}k}}>\frac{r^{\alpha}-\frac{2}{\alpha}k}{\sqrt{\frac{2}{\alpha}k}}\right] \ge \P\left[\frac{R_k^{\alpha}-\frac{2}{\alpha}k}{\sqrt{\frac{2}{\alpha}k}}>0\right]\to \frac{1}{2},
$$
as $k\to \infty$. Therefore we have 
\begin{eqnarray}\label{eqn:3}
\liminf_{r\to \infty}\frac{1}{r^{2\alpha}}\log \prod_{k=\frac{\alpha}{2} r^{\alpha}}^{\alpha r^{\alpha}}\P[R_k^{\alpha}>r^{\alpha}]\ge 0.
\end{eqnarray} 
 By the large deviation principle (Cramer's bound) for Gamma$(\frac{2}{\alpha},1)$ random variable, for $k>\alpha r^{\alpha}$, we get
 $$
 \P[R_k^{\alpha} \le r^{\alpha}]\le \P[R_k^{\alpha}\le \frac{k}{\alpha}]\le e^{-ck},
 $$ 
 for a positive  constant $c$ independent of $k$, as $R_k^{\alpha}\stackrel{d}{=}X_1+X_2+\cdots+X_k$ and $\E X_1=\frac{2}{\alpha}$ (where $X_1,\ldots, X_k$ are i.i.d. Gamma$(\frac{2}{\alpha},1)$ distributed). Therefore for large $r$ 
\begin{eqnarray}\label{eqn:4}
\prod_{k=\alpha r^{\alpha}+1}^{\infty} \P[R_k^{\alpha}>r^{\alpha}]\ge C,
\end{eqnarray}
where $C$ is positive constant. By \eqref{eqn:1alpha}, \eqref{eqn:2alpha}, \eqref{eqn:3} and \eqref{eqn:4} from \eqref{eqn:low} we have,
$$
\liminf_{r\to \infty}\frac{1}{r^{2\alpha}}\log\P[\mathcal X_{\infty}^{(\alpha)}(r\D)=0]\ge -\frac{\alpha}{2}\cdot \frac{1}{4},
$$
 the lower bound. Hence the result.
 \end{proof}

The next result gives  the asymptotic of the hole probability for annulus.
\begin{theorem}\label{thm:gannulus}
Let $U_c=\{z \;|\; c<|z|<1\}$ for fixed $0<c<1$. Then
$$
\lim_{r\to \infty}\frac{1}{r^{2\alpha}}\log \P[\X_{\infty}^{(\alpha)}(rU_c)=0]= -\frac{\alpha}{2}\cdot\l(\frac{1}{4}-\frac{c^{2\alpha}}{4}+\frac{(1-c^{\alpha})^2}{2\alpha\log c}\r).
$$
\end{theorem}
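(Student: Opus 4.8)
The plan is to follow the strategy of Theorem~\ref{thm:holedisk}, exploiting the radial decoupling of $\X_\infty^{(\alpha)}$. By Corollary~\ref{cor:radialdistribution} the moduli of the points are independent with $R_k^\alpha\sim\mathrm{Gamma}(\tfrac2\alpha k,1)$, and since $rU_c=\{z: rc<|z|<r\}$ the hole event factorises:
\[
\P[\X_\infty^{(\alpha)}(rU_c)=0]=\prod_{k=1}^\infty\Big(\P[R_k^\alpha\le (rc)^\alpha]+\P[R_k^\alpha\ge r^\alpha]\Big).
\]
Writing $x_k=\tfrac{2k}{\alpha r^\alpha}$, the mean of $R_k^\alpha$ is $r^\alpha x_k$, so the thresholds $(rc)^\alpha$ and $r^\alpha$ correspond to $x=c^\alpha$ and $x=1$. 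I expect the $r^{2\alpha}$-order contribution to come entirely from the transition band $c^\alpha<x_k<1$, i.e. $\tfrac\alpha2 r^\alpha c^\alpha<k<\tfrac\alpha2 r^\alpha$: for $x_k<c^\alpha$ the modulus lies below the annulus and for $x_k>1$ above it, so those factors tend to $1$ and contribute $0$ in the limit (a concentration argument exactly as in \eqref{eqn:1alpha}, \eqref{eqn:3} and \eqref{eqn:4}).

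For the two tails I would introduce the rate functions $\phi(x)=1-x+x\log x$, governing $\P[R_k^\alpha\ge r^\alpha]$ exactly as in \eqref{eqn:gup}, and $\psi(x)=c^\alpha-x+x\log\tfrac{x}{c^\alpha}$, governing $\P[R_k^\alpha\le (rc)^\alpha]$ via the optimised lower-tail Chernoff bound $\P[R_k^\alpha\le (rc)^\alpha]\le e^{\theta (rc)^\alpha}(1+\theta)^{-2k/\alpha}$ at $\theta=\tfrac{x}{c^\alpha}-1$. A short computation gives $\phi(x)-\psi(x)=1-c^\alpha+x\log c^\alpha$, which vanishes exactly at the logarithmic mean $x^*=\tfrac{1-c^\alpha}{-\log c^\alpha}\in(c^\alpha,1)$, with $\min(\phi,\psi)=\psi$ on $(c^\alpha,x^*)$ and $\min(\phi,\psi)=\phi$ on $(x^*,1)$. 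For the \emph{upper bound} I bound each transition factor by $2e^{-r^\alpha\min(\phi(x_k),\psi(x_k))}$, keep all remaining factors $\le1$, and recognise $\tfrac{2}{\alpha r^\alpha}\sum_k\min(\phi(x_k),\psi(x_k))$ as a Riemann sum, obtaining $\limsup_{r\to\infty}\tfrac1{r^{2\alpha}}\log\P\le-\tfrac\alpha2\int_{c^\alpha}^1\min(\phi,\psi)\,dx$. For the \emph{lower bound} I mimic the region split of Theorem~\ref{thm:holedisk}: outside the band the factors are bounded below by constants (via Cram\'er and the central limit theorem), while inside the band I use $\P[\mathrm{avoid}]\ge\max(\P[R_k^\alpha\le (rc)^\alpha],\P[R_k^\alpha\ge r^\alpha])$ together with the appropriate Stirling estimate (the lower tail on $(c^\alpha,x^*)$, the upper tail on $(x^*,1)$, as in the derivation of \eqref{eqn:2alpha}), yielding the matching $\liminf\ge-\tfrac\alpha2\int_{c^\alpha}^1\min(\phi,\psi)\,dx$.

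Finally I evaluate $\int_{c^\alpha}^{x^*}\psi+\int_{x^*}^1\phi$. Using the primitive $\int\phi\,dx=x-\tfrac34x^2+\tfrac12x^2\log x$ and the analogous one for $\psi$, and simplifying the boundary terms at $x^*$ with the defining relation $1-c^\alpha=-x^*\log c^\alpha$, the integral collapses to $\tfrac14-\tfrac{c^{2\alpha}}4+\tfrac{(1-c^\alpha)^2}{2\alpha\log c}$. This is precisely $R_{U_c}^{(\alpha)}-R_\emptyset^{(\alpha)}$ from Remark~\ref{re:constant}(2) with $a=c$, $b=1$, so the radial computation is consistent with the general potential-theoretic formula and the theorem follows. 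The main difficulty will be the lower bound in the transition band: one must verify that replacing the avoidance probability by its dominant one-sided tail loses nothing to leading order, and control the Stirling corrections uniformly across the two sub-bands $(c^\alpha,x^*)$ and $(x^*,1)$; by comparison the integral evaluation, though tedious, is routine.
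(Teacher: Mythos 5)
Your proposal is correct and follows essentially the same route as the paper's proof: factorise via Corollary~\ref{cor:radialdistribution}, isolate the transition band $c^\alpha<\tfrac{2k}{\alpha r^\alpha}<1$, split it at the crossover $\lambda=\tfrac{1-c^\alpha}{-\log c^\alpha}$ of the two rate functions, pass to Riemann sums, and handle the lower bound by the same five-region decomposition reusing \eqref{eqn:1alpha}, \eqref{eqn:3} and \eqref{eqn:4}. The only cosmetic difference is that you obtain the lower-tail rate $\psi$ by an optimised Chernoff bound, whereas the paper bounds the Gamma integral directly using the monotonicity of $e^{-t}t^a$ for $t\le a$ plus Stirling; both yield the same exponent.
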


As a corollary of this result, for $\alpha=2$, we get asymptotic of the hole probability for infinite Ginibre ensemble, proved in \cite{hole}.
\begin{cor}\label{thm:annulus1}
Let $U_c=\{z \;|\; c<|z|<1\}$ for fixed $0<c<1$. Then
$$
\lim_{r\to \infty}\frac{1}{r^4}\log \P[\X_{\infty}^{(2)}(rU_c)=0]=-\frac{(1-c^2)}{4}\cdot\l(1+c^2+\frac{1-c^2}{\log c}\r).
$$
\end{cor}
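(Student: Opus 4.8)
The plan is to recognize that this corollary is simply the specialization $\alpha = 2$ of Theorem \ref{thm:gannulus}, so that no new probabilistic or potential-theoretic argument is required; the entire content is an elementary algebraic simplification. Thus I would invoke Theorem \ref{thm:gannulus} directly and then reduce the resulting closed form.

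First I would substitute $\alpha = 2$ into the formula provided by Theorem \ref{thm:gannulus}, using $c^{2\alpha}=c^{4}$, $c^{\alpha}=c^{2}$, $2\alpha\log c = 4\log c$, and $\tfrac{\alpha}{2}=1$. This gives
\begin{align*}
\lim_{r\to\infty}\frac{1}{r^{4}}\log\P[\X_{\infty}^{(2)}(rU_c)=0]
&= -\frac{2}{2}\l(\frac{1}{4}-\frac{c^{4}}{4}+\frac{(1-c^{2})^{2}}{2\cdot 2\,\log c}\r)\\
&= -\l(\frac{1-c^{4}}{4}+\frac{(1-c^{2})^{2}}{4\log c}\r).
\end{align*}

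Next I would factor the bracketed expression: writing $1-c^{4}=(1-c^{2})(1+c^{2})$ and pulling out the common factor $\tfrac{1-c^{2}}{4}$ yields
\begin{align*}
-\l(\frac{(1-c^{2})(1+c^{2})}{4}+\frac{(1-c^{2})^{2}}{4\log c}\r)
= -\frac{1-c^{2}}{4}\l(1+c^{2}+\frac{1-c^{2}}{\log c}\r),
\end{align*}
which is exactly the asserted value. Since every step is an identity, there is no genuine obstacle; the only points worth verifying carefully are the bookkeeping of the prefactor $\tfrac{\alpha}{2}=1$ and the denominator $2\alpha\log c$ at $\alpha=2$. As a consistency check one may also compare the resulting constant with the annulus constant $R_U'$ computed in Example \ref{thm:annulus}, where the same expression $\tfrac14\l((1-c^{4})+\tfrac{(1-c^{2})^{2}}{\log c}\r)b^{4}$ already appeared.
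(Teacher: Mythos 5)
Your proposal is correct and is exactly the route the paper takes: the corollary is stated as an immediate specialization of Theorem \ref{thm:gannulus} at $\alpha=2$, and your algebraic simplification (factoring $1-c^{4}=(1-c^{2})(1+c^{2})$ and extracting $\tfrac{1-c^{2}}{4}$) is the only content needed. The paper does not even write out this computation, so your version is if anything slightly more complete.
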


\begin{proof}[Proof of Theorem \ref{thm:gannulus}]
From Corollary \ref{cor:radialdistribution}, we have 
{\footnotesize\begin{align}\label{eqn:ganupp}
\P[\X_{\infty}^{(\alpha)}(rU_c)=0]=\prod_{k=1}^{\infty}\left(\P[R_k^{\alpha}<c^{\alpha}r^{\alpha}]+\P[R_k^{\alpha}>r^{\alpha}]\right)
\le \prod_{k=\frac{\alpha}{2}\cdot c^{\alpha}r^{\alpha}}^{\frac{\alpha}{2}\cdot r^{\alpha}}\left(\P[R_k^{\alpha}<c^{\alpha}r^{\alpha}]+\P[R_k^{\alpha}>r^{\alpha}]\right).
\end{align}}
(We consider those $k$ for which $c^{\alpha}r^{\alpha}\le \E R_k^{\alpha}=\frac{2k}{\alpha}\le r^{\alpha}$).
Since the function $e^{-t}t^a$ is increasing when $t\le a$, we have 
\begin{align*}
\P[R_k^{\alpha}<c^{\alpha}r^{\alpha}]=\frac{1}{\Gamma(\frac{2}{\alpha}k)}\int_0^{c^{\alpha}r^{\alpha}}e^{-x}x^{\frac{2}{\alpha}k-1}dx\le \frac{1}{\Gamma(\frac{2}{\alpha}k)}e^{-c^{\alpha}r^{\alpha}}(c^{\alpha}r^{\alpha})^{\frac{2}{\alpha}k},
\end{align*}
for  all $k\ge \frac{\alpha}{2}\cdot c^{\alpha}r^{\alpha}$. The Stirling's formula for Gamma function implies that $\log \Gamma(t+1)\ge \log(\sqrt{2\pi t})+t\log t -t -1\ge t\log t -t $ for large $t$. Therefore for $c^{\alpha}r^{\alpha}<\frac{2}{\alpha}k\le r^{\alpha}$ and for large $r$,
\begin{align*}
\P[R_k^{\alpha}<c^{\alpha}r^{\alpha}]
 &\le \frac{2k}{\alpha}e^{-c^{\alpha}r^{\alpha}+\frac{2k}{\alpha}\log c^{\alpha}r^{\alpha}-\log\Gamma(\frac{2k}{\alpha}+1)}
\\&\le  r^{\alpha}e^{-\l\{c^{\alpha}r^{\alpha}-\frac{2k}{\alpha}+\frac{2k}{\alpha}\log(\frac{2k}{\alpha .c^{\alpha}r^{\alpha}})\r\}}.
\end{align*}
From \eqref{eqn:gup}, for $k\le \frac{\alpha}{2}r^{\alpha}$,  we have
\begin{align*}
\P[R_k^{\alpha}>r^{\alpha}]\le e^{-\l\{r^{\alpha}-\frac{2k}{\alpha}+\frac{2k}{\alpha}\log\frac{2k}{\alpha r^{\alpha}}\r\}}.
\end{align*}
Note that if $ k\le \frac{\alpha}{2}\cdot\lambda r^{\alpha}$, where $ \lambda=\frac{1-c^{\alpha}}{-\alpha\log c}
$, then we have 
$$
c^{\alpha}r^{\alpha}-\frac{2k}{\alpha}+\frac{2k}{\alpha}\log\frac{2k}{\alpha. c^{\alpha}r^{\alpha}}\le r^{\alpha}-\frac{2k}{\alpha}+\frac{2k}{\alpha}\log \frac{2k}{\alpha r^{\alpha}}.
$$
 Therefore, for large $r$, for $\frac{\alpha }{2}\cdot c^{\alpha}r^{\alpha} \le k \le \frac{\alpha}{2}\cdot  \lambda r^{\alpha}$, 
 \begin{align}\label{eqn:g1up}
 \P[R_k^{\alpha}<c^{\alpha}r^{\alpha}]+\P[R_k^{\alpha}>r^{\alpha}]\le (r^{\alpha}\log (c^{\alpha}r^{\alpha})+1)e^{-\l\{c^{\alpha}r^{\alpha}-\frac{2k}{\alpha}+\frac{2k}{\alpha}\log\frac{2k}{\alpha.c^{\alpha} r^{\alpha}}\r\}}.
 \end{align}
Similarly, for large $r$, we have 
 \begin{align}\label{eqn:g2up}
 \P[R_k^{\alpha}<c^{\alpha}r^{\alpha}]+\P[R_k^{\alpha}>r^{\alpha}]\le (r^{\alpha}\log (c^{\alpha}r^{\alpha})+1)e^{-\l\{r^{\alpha}-\frac{2k}{\alpha}+\frac{2k}{\alpha}\log\frac{2k}{\alpha r^{\alpha}}\r\}},
 \end{align}
when $ \frac{\alpha}{2}\cdot  \lambda r^{\alpha}\le k \le \frac{\alpha}{2}r^{\alpha}$. By \eqref{eqn:g1up} and \eqref{eqn:g2up}, from \eqref{eqn:ganupp} we get
\begin{align*}
&\P[\X_{\infty}^{(\alpha)}(rU_c)=0]
\\ \le & (r^{\alpha}\log (c^{\alpha}r^{\alpha})+1)^{\frac{\alpha}{2}r^{\alpha}(1-c^{\alpha})}\prod_{\frac{\alpha}{2}\cdot c^{\alpha}r^{\alpha}}^{\frac{\alpha}{2}\cdot\lambda r^{\alpha}}e^{-\l\{c^{\alpha}r^{\alpha}-\frac{2k}{\alpha}+\frac{2k}{\alpha}\log\frac{2k}{\alpha.c^{\alpha} r^{\alpha}}\r\}}\prod_{\frac{\alpha}{2}\cdot \lambda r^{\alpha}}^{\frac{\alpha}{2}\cdot r^{\alpha}}e^{-\l\{r^{\alpha}-\frac{2k}{\alpha}+\frac{2k}{\alpha}\log\frac{2k}{\alpha r^{\alpha}}\r\}},
\end{align*}
for large $r$. Therefore we have 
\begin{align*}
&\limsup_{r\to \infty}\frac{1}{r^{2\alpha}}\log \P[\X_{\infty}^{(\alpha)}(rU_c)=0]
\\&\le -\liminf_{r\to\infty}\frac{1}{r^{2\alpha}}\l\{\sum_{\frac{\alpha}{2}\cdot c^{\alpha}r^{\alpha}}^{\frac{\alpha}{2}\cdot\lambda r^{\alpha}}\l\{c^{\alpha}r^{\alpha}-\frac{2k}{\alpha}+\frac{2k}{\alpha}\log\frac{2k}{\alpha.c^{\alpha} r^{\alpha}}\r\}+\sum_{\frac{\alpha}{2}\cdot \lambda r^{\alpha}}^{\frac{\alpha}{2}\cdot r^{\alpha}}\l\{r^{\alpha}-\frac{2k}{\alpha}+\frac{2k}{\alpha}\log\frac{2k}{\alpha r^{\alpha}}\r\}\r\}
\\&\le -\frac{\alpha}{2}\liminf_{r\to\infty}\frac{2}{\alpha r^{\alpha}}\l\{\sum_{\frac{\alpha}{2}\cdot c^{\alpha}r^{\alpha}}^{\frac{\alpha}{2}\cdot\lambda r^{\alpha}}\l\{c^{\alpha}-\frac{2k}{\alpha r^{\alpha}}+\frac{2k}{\alpha r^{\alpha}}\log\frac{2k}{\alpha.c^{\alpha} r^{\alpha}}\r\}+\sum_{\frac{\alpha}{2}\cdot \lambda r^{\alpha}}^{\frac{\alpha}{2}\cdot r^{\alpha}}\l\{1-\frac{2k}{\alpha r^{\alpha}}+\frac{2k}{\alpha r^{\alpha}}\log\frac{2k}{\alpha r^{\alpha}}\r\}\r\}
\\&=-\frac{\alpha}{2}\cdot\l\{ \int_{c^{\alpha}}^{\lambda}\l(c^{\alpha}-x+x\log \frac{x}{c^{\alpha}}\r)dx+\int_{\lambda}^{1}(1-x+x\log x)dx \r\}
\\&=-\frac{\alpha}{2}\cdot\l\{\frac{1}{4}-\frac{c^{2\alpha}}{4}-\lambda(1-c^{\alpha})-\frac{\lambda^2}{2}\cdot \log c^{\alpha}\r\}.
\end{align*}
Putting $\lambda=\frac{1-c^{\alpha}}{-\log c^{\alpha}}$, we have the upper bound 
$$
\limsup_{r\to \infty}\frac{1}{r^{2\alpha}}\log \P[\X_{\infty}^{(\alpha)}(rU_c)=0]\le -\frac{\alpha}{2}\cdot\l(\frac{1}{4}-\frac{c^{2\alpha}}{4}+\frac{(1-c^{\alpha})^2}{2\alpha\log c}\r).
$$

\noindent{\bf Lower bound: }We have 
\begin{align*}
&\P[\X_{\infty}^{(\alpha)}(rU_c)=0]=\prod_{k=1}^{\infty}\l(\P[R_k^{\alpha}<c^{\alpha}r^{\alpha}]+\P[R_k^{\alpha}>r^{\alpha}]\r)
\\&\ge \prod_{k=1}^{\log r}\P[R_k^{\alpha}>r^{\alpha}] \prod_{\log r}^{\frac{\alpha}{2}c^{\alpha}r^{\alpha}}\P[R_k^{\alpha}<c^{\alpha}r^{\alpha}]\prod_{\frac{\alpha}{2}c^{\alpha}r^{\alpha}}^{\frac{\alpha}{2}\lambda r^{\alpha}}\P[R_k^{\alpha}<c^{\alpha}r^{\alpha}]
\prod_{\frac{\alpha}{2}\lambda r^{\alpha}}^{\frac{\alpha}{2}r^{\alpha}}\P[R_k^{\alpha}>r^{\alpha}]
\prod_{\frac{\alpha}{2} r^{\alpha}}^{\infty}\P[R_k^{\alpha}>r^{\alpha}]
\end{align*}
Since $R_k^{\alpha}\sim $Gamma$(\frac{2}{\alpha}k)$, for $k\le \frac{\alpha}{2}c^{\alpha}r^{\alpha}$, we have $\P[R_k^{\alpha}<c^{\alpha}r^{\alpha}]\ge \frac{1
}{4}$ for large $r$. Indeed, as $R_k^{\alpha}=X_1+X_2+\cdots+X_k$ (where $X_1, X_2,\ldots,X_k$ are i.i.d. Gamma($\frac{2}{\alpha},1$) random variables ), by central limit theorem 
$$
\P[R_k^{\alpha}<c^{\alpha}r^{\alpha}]=\P\l[\frac{R_k^{\alpha}-\frac{2}{\alpha}k}{\sqrt{\frac{2}{\alpha}k}}<\frac{c^{\alpha}r^{\alpha}-\frac{2}{\alpha}k}{\sqrt{\frac{2}{\alpha}k}}\r]\ge\P\l[\frac{R_k^{\alpha}-\frac{2}{\alpha}k}{\sqrt{\frac{2}{\alpha}k}}\le 0\r]\to \frac{1}{2},
$$
as  $k\to \infty$. Therefore we have 
\begin{align}\label{eqn:glow}
\liminf_{r\to \infty}\frac{1}{r^{2\alpha}}\log \prod_{\log r}^{\frac{\alpha}{2}c^{\alpha}r^{\alpha}}\P[R_k^{\alpha}<c^{\alpha}r^{\alpha}]\ge \liminf_{r\to \infty}\frac{\frac{\alpha}{2}c^{\alpha}r^{\alpha}-\log r}{r^{2\alpha}} \log\frac{1}{4}=0
\end{align}Again we have 
\begin{align*}
\P[R_k^{\alpha}<c^{\alpha}r^{\alpha}]=\frac{1}{\Gamma(\frac{2}{\alpha}k)}\int_{c^{\alpha}r^{\alpha}}^{\infty}e^{-x}x^{\frac{2}{\alpha}k-1}dx\ge \frac{e^{-c^{\alpha}r^{\alpha}}(c^{\alpha}r^{\alpha})^{\frac{2}{\alpha}k-1}}{\Gamma(\frac{2}{\alpha}k)}.
\end{align*}
Therefore we have 
\begin{align}\label{eqn:glow1}
&\liminf_{r\to \infty}\frac{1}{r^{2\alpha}}\log \prod_{\frac{\alpha}{2}c^{\alpha}r^{\alpha}}^{\frac{\alpha}{2}\lambda r^{\alpha}}\P[R_k^{\alpha}<c^{\alpha}r^{\alpha}]
= \liminf_{r\to \infty}\frac{1}{r^{2\alpha}}\sum_{\frac{\alpha}{2}c^{\alpha}r^{\alpha}}^{\frac{\alpha}{2}\lambda r^{\alpha}}\log \P[R_k^{\alpha}<c^{\alpha}r^{\alpha}]\nonumber
\\&\ge -\limsup_{r\to \infty}\frac{1}{r^{2\alpha}}\sum_{\frac{\alpha}{2}c^{\alpha}r^{\alpha}}^{\frac{\alpha}{2}\lambda r^{\alpha}}\l\{c^{\alpha}r^{\alpha}-\l(\frac{2k}{\alpha}-1\r)\log c^{\alpha}r^{\alpha}+\log\Gamma(\frac{2}{\alpha}k+1)\r\}\nonumber
\\&\ge  -\limsup_{r\to \infty}\frac{1}{r^{2\alpha}}\sum_{\frac{\alpha}{2}c^{\alpha}r^{\alpha}}^{\frac{\alpha}{2}\lambda r^{\alpha}}\l\{c^{\alpha}r^{\alpha}-\frac{2k}{\alpha}\log c^{\alpha}r^{\alpha} +\log c^{\alpha}r^{\alpha}+\frac{2k}{\alpha}\log(\frac{2k}{\alpha})-\frac{2k}{\alpha}+1\r\}\nonumber
\\&\;\;\;\;\mbox{ (by Stirling's formula)}\nonumber
\\&=  -\limsup_{r\to \infty}\frac{1}{r^{2\alpha}}\sum_{\frac{\alpha}{2}c^{\alpha}r^{\alpha}}^{\frac{\alpha}{2}\lambda r^{\alpha}}\l\{c^{\alpha}r^{\alpha}-\frac{2k}{\alpha} +\frac{2k}{\alpha}\log\frac{2k}{\alpha.c^{\alpha}r^{\alpha}}\r\}\nonumber
\\&=-\frac{\alpha}{2}\cdot \limsup_{r\to \infty}\frac{2}{\alpha r^{\alpha}}\sum_{\frac{\alpha}{2}c^{\alpha}r^{\alpha}}^{\frac{\alpha}{2}\lambda r^{\alpha}}\l\{c^{\alpha}-\frac{2k}{\alpha r^{\alpha}} +\frac{2k}{\alpha r^{\alpha}}\log\frac{2k}{\alpha.c^{\alpha}r^{\alpha}}\r\}\nonumber
\\&=-\frac{\alpha}{2}\cdot\int_{c^{\alpha}}^{\lambda}\l\{c^{\alpha}-x+x\log\frac{x}{c^{\alpha}}\r\}dx
\end{align}
From \eqref{eqn:gup}, we have 
\begin{align}\label{eqn:glow2}
\liminf_{r\to \infty}\frac{1}{r^{2\alpha}}\log \prod_{\frac{\alpha}{2}\lambda r^{\alpha}}^{\frac{\alpha}{2}r^{\alpha}}\P[R_k^{\alpha}>r^{\alpha}]\nonumber
&\ge -\limsup_{r\to \infty} \frac{1}{r^{2\alpha}}\sum_{\frac{\alpha}{2}\lambda r^{\alpha}}^{\frac{\alpha}{2}r^{\alpha}}\l\{r^{\alpha}-\frac{2k}{\alpha}+\frac{2k}{\alpha}\log\frac{2k}{\alpha r^{\alpha}}\r\}\nonumber
\\&=-\frac{\alpha}{2}\cdot\limsup_{r\to \infty}\frac{2}{\alpha r^{\alpha}}\sum_{\frac{\alpha}{2}\lambda r^{\alpha}}^{\frac{\alpha}{2}r^{\alpha}}\l\{1-\frac{2k}{\alpha r^{\alpha}}+\frac{2k}{\alpha r^{\alpha}}\log\frac{2k}{\alpha r^{\alpha}}\r\}\nonumber
\\&= -\frac{\alpha}{2}\cdot \int_{\lambda}^{1}\l\{1-x+x\log x\r\}dx
\end{align}
Therfore from \eqref{eqn:1alpha}, \eqref{eqn:4}, \eqref{eqn:glow}, \eqref{eqn:glow1}, \eqref{eqn:glow2},  we have 
\begin{align*}
&\liminf_{r\to \infty}\frac{1}{r^{2\alpha}}\log \P[\X_{\infty}^{(\alpha)}(rU_c)=0]
\\ \ge & -\frac{\alpha}{2}\cdot \l\{\int_{c^{\alpha}}^{\lambda}\l\{c^{\alpha}-x+x\log\frac{x}{c^{\alpha}}\r\}dx+ \int_{\lambda}^{1}\l\{1-x+x\log x\r\}dx\r\}
\\=&-\frac{\alpha}{2}\cdot\l\{\frac{1}{4}-\frac{c^{2\alpha}}{4}-\lambda(1-c^{\alpha})-\frac{\lambda^2}{2}\cdot \log c^{\alpha}\r\}.
\end{align*}
Putting $\lambda=\frac{1-c^{\alpha}}{-\log c^{\alpha}}$, we have the lower bound 
$$
\liminf_{r\to \infty}\frac{1}{r^{2\alpha}}\log \P[\X_{\infty}^{(\alpha)}(rU_c)=0]\ge -\frac{\alpha}{2}\cdot\l(\frac{1}{4}-\frac{c^{2\alpha}}{4}+\frac{(1-c^{\alpha})^2}{2\alpha\log c}\r).
$$
Hence the result.
\end{proof}

\section{Hole probabilities for $\X_{\infty}^{(\alpha)}$ in general domains}
In this section we calculate the asymptotics of the hole probabilities for $\X_{\infty}^{(\alpha)}$ for certain scaled open sets. The following result gives the decay constants of the hole probabilities in terms of minimum energies of the complements of the given open sets.

\begin{theorem}\label{thm:geninfinite}
Let $U$ be an open subset of $ D(0,(\frac{2}{\alpha})^{\frac{1}{\alpha}})$ satisfying the conditions of Theorem \ref{thm:gholeprobability12}. Then 
$$
\lim_{r\to \infty}\frac{1}{r^{2\alpha}}\log\P[\X_{\infty}^{(\alpha)}(rU)=0]=R_{\emptyset}^{(\alpha)}-R_U^{(\alpha)},
$$
for all $\alpha>0$.
\end{theorem}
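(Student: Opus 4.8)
The plan is to sandwich $\P[\X_\infty^{(\alpha)}(rU)=0]$ between hole probabilities of a single, carefully chosen \emph{finite} ensemble $\X_N^{(\alpha)}$ and to feed that into Theorem~\ref{thm:allalpha}. Since $U$ satisfies the hypotheses of Theorem~\ref{thm:gholeprobability12}, its closure sits strictly inside the bulk disc: write $\bar U\subseteq D(0,\rho)$ with $\rho<T:=(\tfrac2\alpha)^{1/\alpha}$, and fix any $\rho''\in(\rho,T)$. Let $N=N(r)$ be the nearest integer to $\tfrac\alpha2(r\rho'')^{\alpha}$ and put $c:=T/\rho''>1$. Then $N^{1/\alpha}c=r(1+o(1))$, so $\P[\X_N^{(\alpha)}(rU)=0]=\P[\X_N^{(\alpha)}(N^{1/\alpha}(cU))=0]$ up to the harmless effect of the integer part. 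Applying Theorem~\ref{thm:allalpha} to the fixed set $cU$ (which still lies in $D(0,T)$ and inherits the hypotheses under scaling) together with the scaling identity $R_{cU}^{(\alpha)}-R_\emptyset^{(\alpha)}=c^{2\alpha}(R_U^{(\alpha)}-R_\emptyset^{(\alpha)})$ of Remark~\ref{re:scale}, and noting that the prefactor $\tfrac{N^2}{r^{2\alpha}}c^{2\alpha}\to(\tfrac\alpha2)^2(\rho'')^{2\alpha}(T/\rho'')^{2\alpha}=\tfrac{\alpha^2}{4}T^{2\alpha}=1$ (because $T^{2\alpha}=(2/\alpha)^2$), one obtains
$$\lim_{r\to\infty}\frac1{r^{2\alpha}}\log\P[\X_N^{(\alpha)}(rU)=0]=R_\emptyset^{(\alpha)}-R_U^{(\alpha)},$$
independently of the auxiliary radius $\rho''$. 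Both inequalities below reduce to comparing $\X_\infty^{(\alpha)}$ with this one truncation.

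For the upper bound I would use stochastic domination. Since $\mathbb{K}_N^{(\alpha)}\preceq\mathbb{K}_\infty^{(\alpha)}$ as orthogonal projections (the range of the former is contained in that of the latter), there is a coupling with $\X_N^{(\alpha)}\subseteq\X_\infty^{(\alpha)}$ almost surely; equivalently, $\P[\X_\infty^{(\alpha)}(rU)=0]=\lim_n\P[\X_n^{(\alpha)}(rU)=0]$ is a \emph{decreasing} limit. Either way $\P[\X_\infty^{(\alpha)}(rU)=0]\le\P[\X_N^{(\alpha)}(rU)=0]$, and the displayed limit yields $\limsup_{r\to\infty}\tfrac1{r^{2\alpha}}\log\P[\X_\infty^{(\alpha)}(rU)=0]\le R_\emptyset^{(\alpha)}-R_U^{(\alpha)}$.

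For the lower bound the idea is that the points of $\X_\infty^{(\alpha)}$ lying \emph{beyond} the truncation are, with overwhelming probability, too far out to meet $rU$. By Corollary~\ref{cor:radialdistribution} the moduli of $\X_\infty^{(\alpha)}$ form an independent family $\{R_k\}$ with $R_k^{\alpha}\sim\mathrm{Gamma}(\tfrac2\alpha k,1)$, and for $k>N$ the mean $\tfrac2\alpha k$ exceeds $(r\rho'')^{\alpha}>(r\rho)^{\alpha}$; since $R_k^{\alpha}\eqd X_1+\cdots+X_k$ with $X_i\sim\mathrm{Gamma}(\tfrac2\alpha,1)$, a Cramér left‑tail bound gives $\P[R_k\le r\rho]\le e^{-c_0k}$, whence $\P[\exists\,k>N:\,R_k\le r\rho]\le e^{-c_1 r^{\alpha}}$. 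Because $rU\subseteq D(0,r\rho)$, on the complementary event every point carrying a modulus $R_k$ with $k>N$ lies outside $rU$. I would therefore realize $\X_\infty^{(\alpha)}$ through a hierarchical coupling in which the first $N$ points form $\X_N^{(\alpha)}$ while the remaining points carry the moduli $\{R_k\}_{k>N}$ \emph{independently} of that bulk; then the intersection of $\{\X_N^{(\alpha)}(rU)=0\}$ with $\{R_k>r\rho\ \forall k>N\}$ forces $\{\X_\infty^{(\alpha)}(rU)=0\}$, and independence gives
$$\P[\X_\infty^{(\alpha)}(rU)=0]\ge\P[\X_N^{(\alpha)}(rU)=0]\,\bigl(1-e^{-c_1r^{\alpha}}\bigr)\ge\tfrac12\,\P[\X_N^{(\alpha)}(rU)=0]$$
for large $r$, so that $\liminf_{r\to\infty}\tfrac1{r^{2\alpha}}\log\P[\X_\infty^{(\alpha)}(rU)=0]\ge R_\emptyset^{(\alpha)}-R_U^{(\alpha)}$, matching the upper bound.

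The hard part will be this last coupling. Corollary~\ref{cor:radialdistribution} records only the marginal law of the \emph{set} of moduli, whereas the argument needs a genuine joint construction that simultaneously gives $\X_N^{(\alpha)}\subseteq\X_\infty^{(\alpha)}$ \emph{and} makes the tail moduli $\{R_k\}_{k>N}$ independent of the bulk configuration. One cannot replace this by a crude union bound: $\P[\exists\,k>N:R_k\le r\rho]=e^{-\Theta(r^{\alpha})}$ is vastly larger than the hole probability $e^{-\Theta(r^{2\alpha})}$, so mere smallness of the tail event is useless and the product (independence) structure is essential. Establishing it — presumably by exploiting the mode‑by‑mode structure underlying Lemma~\ref{lem:genkostlan}, or alternatively by a rank‑one Fredholm‑determinant update controlling the ratio $\P[\X_{n+1}^{(\alpha)}(rU)=0]/\P[\X_n^{(\alpha)}(rU)=0]$ for $n\ge N$ through the exponential smallness of $\int_{rU}|\varphi_{n+1}|^2\,dm$ — is where the real work lies; by contrast, the verification that $cU$ inherits the hypotheses of Theorem~\ref{thm:gholeprobability12} under scaling is routine.
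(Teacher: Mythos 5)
Your reduction to a single finite ensemble truncated at $N\asymp r^{\alpha}$, the bookkeeping with Theorem~\ref{thm:allalpha} and the scaling identity of Remark~\ref{re:scale}, and the upper bound via monotonicity of $n\mapsto\P[\X_n^{(\alpha)}(rU)=0]$ are all correct and match the paper's strategy (the paper proves the monotonicity directly from $\det\begin{bmatrix}A&B\\B^*&D\end{bmatrix}\le\det(A)\det(D)$ applied to the Gram matrix $M_n(rU)=\bigl(\int_{(rU)^c}\varphi_i\bar\varphi_j\,dm\bigr)$, rather than invoking stochastic domination for nested projections, but either route is fine).

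The lower bound, however, has a genuine gap, and it is exactly the one you flag yourself: the ``hierarchical coupling'' in which the first $N$ points form $\X_N^{(\alpha)}$ while the tail moduli $\{R_k\}_{k>N}$ are \emph{independent} of the bulk configuration is never constructed, and Corollary~\ref{cor:radialdistribution} cannot supply it --- it only gives the law of the unordered set of moduli of $\X_\infty^{(\alpha)}$, with no canonical assignment of moduli to points and no statement about the joint law of a monotone coupling $\X_N^{(\alpha)}\subseteq\X_\infty^{(\alpha)}$. As you correctly observe, a union bound is useless here because $\P[\exists k>N: R_k\le r\rho]=e^{-\Theta(r^{\alpha})}$ dwarfs the target $e^{-\Theta(r^{2\alpha})}$, so the multiplicative structure is the whole theorem; admitting that ``this is where the real work lies'' means the proof is not complete. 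The paper fills this hole with a purely deterministic argument that is the missing idea: write $\P[\X_n^{(\alpha)}(rU)=0]=\det M_n(rU)$, factor $\det M_n(rU)=\det M_{N}(rU)\cdot\det\bigl([M_n(rU)/M_{N}(rU)]\bigr)$ through the Schur complement with $N=2r^{\alpha}$, and compare with the full bulk disk $D=D(0,(\tfrac2\alpha)^{1/\alpha})$: since $M_n(rU)\ge M_n(rD)>0$, inversion reverses the order and the Schur complements satisfy $[M_n(rU)/M_{N}(rU)]\ge[M_n(rD)/M_{N}(rD)]$, and the right-hand side is diagonal by rotational invariance, with determinant $\prod_{k>N}\int_{(rD)^c}|\varphi_k|^2dm=\prod_{k>N}\P[R_{k+1}^{\alpha}>\tfrac2\alpha r^{\alpha}]\ge C>0$ by the Cram\'er bound you already have. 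This yields $\P[\X_\infty^{(\alpha)}(rU)=0]\ge C\,\P[\X_{2r^{\alpha}}^{(\alpha)}(rU)=0]$ with no coupling at all; your closing suggestion of a rank-one determinant update points in this direction, but without the comparison $M_n(rU)\ge M_n(rD)$ to control the inverse of the bulk block it does not close either.
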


The following remarks say that we can recover the hole probabilities for $\X_{\infty}^{(\alpha)}$ for circular domains using Theorem \ref{thm:geninfinite}, calculated in Section \ref{sec:cir}.
\begin{remark}
\begin{enumerate}
\item Choose $a>0$ such that $a<(\frac{2}{\alpha})^{\frac{1}{\alpha}}$. Then by Theorem \ref{thm:geninfinite} we have
\begin{align*}
\lim_{r\to\infty}\frac{1}{r^{2\alpha}}\log\P[\X_{\infty}^{(\alpha)}(r\D)=0]&=\lim_{r\to\infty}\frac{1}{r^{2\alpha}}\log\P[\X_{\infty}^{(\alpha)}\l(\frac{r}{a}\cdot a\D\r)=0]
=\frac{1}{a^{2\alpha}}\cdot \l[R_{\emptyset}^{(\alpha)}-R_U^{(\alpha)}\r],
\end{align*}
where $U=D(0,a)$. Therefore by Remark \ref{re:constant} we get 
\begin{align*}
\lim_{r\to\infty}\frac{1}{r^{2\alpha}}\log\P[\X_{\infty}^{(\alpha)}(r\D)=0]=-\frac{1}{a^{2\alpha}}\cdot \frac{\alpha}{2}\cdot\frac{a^{2\alpha}}{4}=-\frac{\alpha}{2}\cdot\frac{1}{4}.
\end{align*}
Thus we get the result of Theorem \ref{thm:holedisk}.

\item Let $b>0$ such that $b<(\frac{2}{\alpha})^{\frac{1}{\alpha}}$ and $U_c=\{z\suchthat 0<c<|z|<1\}$. Then by Theorem \ref{thm:geninfinite} we have 
\begin{align*}
\lim_{r\to\infty}\frac{1}{r^{2\alpha}}\log\P[\X_{\infty}^{(\alpha)}(rU_c)=0]&=\lim_{r\to\infty}\frac{1}{r^{2\alpha}}\log\P[\X_{\infty}^{(\alpha)}\l(\frac{r}{b}\cdot bU_c\r)=0]
=\frac{1}{b^{2\alpha}}\cdot \l[R_{\emptyset}^{(\alpha)}-R_{bU_c}^{(\alpha)}\r],
\end{align*}
where $bU_c=\{z\suchthat 0<cb<|z|<b\}$, annulus with inner radius $cb$ and outer radius $b$. Therefore by the Remark \ref{re:constant}, for $a=cb$, we get 
\begin{align*}
\lim_{r\to\infty}\frac{1}{r^{2\alpha}}\log\P[\X_{\infty}^{(\alpha)}(rU_c)=0]&=-\frac{1}{b^{2\alpha}}\cdot \frac{\alpha}{2} \l(\frac{b^{2\alpha}}{4}-\frac{a^{2\alpha}}{4}-\frac{(b^{\alpha}-a^{\alpha})^2}{2\alpha\log(b/a)}\r)
\\&=-\frac{\alpha}{2} \l(\frac{1}{4}-\frac{c^{2\alpha}}{4}+\frac{(1-c^{\alpha})^2}{2\alpha\log(c)}\r).
\end{align*}
This recovers the result of Theorem \ref{thm:gannulus}.
\end{enumerate}
\end{remark}

Now we proceed to prove Theorem \ref{thm:geninfinite}.
\begin{proof}[Proof of Theorem \ref{thm:geninfinite}]
 Fixed $\alpha>0$. Since $\X_{n}^{(\alpha)}$  converges in distribution to $\X_{\infty}^{(\alpha)}$  as $n \to \infty$, therefore  
$$
\P[\X_{\infty}^{(\alpha)}(rU)=0]=\lim_{n \to \infty} \P [\X_{n}^{(\alpha)}(rU)=0].
$$
Again $\X_{n}^{(\alpha)}$ is a determinantal point process with kernel $\mathbb K_{n}^{(\alpha)}$ with respect to Lebesgue measure. The kernel $\mathbb K_{n}^{(\alpha)}$ can be expressed as
$$
\mathbb K_{n}^{(\alpha)}(z,w)=\sum_{k=0}^{n-1}\varphi_k(z)\bar{\varphi_k(w)}\;\;\mbox{ where $\varphi_k(z)=\frac{\sqrt{\alpha}. z^k}{\sqrt{2\pi\Gamma(\frac{2}{\alpha}(k+1))}}e^{-\frac{|z|^{\alpha}}{2}}$}.
$$
The joint density of the points of $\X_{n}^{(\alpha)}$, with uniform order, is
$$
\frac{1}{n!}\det \l(\mathbb K_{n}^{(\alpha)}(z_i,z_j)\r)_{1\le i,j\le n},
$$
with respect to the Lebesgue measure in $\C^n$. 
Therefore 
\begin{align*}
\P[\X_{n}^{(\alpha)}(rU)=0]
&=\frac{1}{n!}\int_{(rU)^c}\cdots \int_{(rU)^c}\det (K_n(z_i,z_j))_{1\le i,j\le n}\prod_{i=1}^ndm(z_i)
\\&=\frac{1}{n!}\int_{(rU)^c}\cdots \int_{(rU)^c}\det (\varphi_k(z_i))_{1\le i,k\le n}\det (\bar {\varphi_k(z_i)})_{1\le i,k\le n}\prod_{i=1}^ndm(z_i)
\\&=\frac{1}{n!}\int_{(rU)^c}\cdots \int_{(rU)^c}\sum_{\sigma,\tau \in S_n}{{\sgn}(\sigma)}{{\sgn}(\tau)}\prod_{i=1}^{n}\varphi_{\sigma(i)}(z_i)\bar {\varphi_{\tau(i)}(z_i)}\prod_{i=1}^ndm(z_i)
\\&=\sum_{\sigma\in S_n}{{\sgn}(\sigma)}\prod_{i=1}^{n}\int_{(rU)^c}\varphi_{i}(z)\bar {\varphi_{\sigma(i)}(z)}dm(z)
\\&=\det\left(\int_{(rU)^c}\varphi_{i}(z)\bar {\varphi_{j}(z)}dm(z)\right)_{1\le i,j\le n}.
\end{align*}
Let us define
 $$M_n(rU):=\left(\int_{(rU)^c}\varphi_{i}(z)\bar {\varphi_{j}(z)}dm(z)\right)_{1\le i,j\le n}.$$  $M_n(rU)$ is the integral of the positive definite matrix function $\left(\varphi_{i}(z)\bar {\varphi_{j}(z)}\right)_{1\le i,j\le n}$ over the region $(rU)^c$. So, we have  that 
$ M_n(rU)\ge M_n(rD)\ge 0$ for all $n$ and $U\subseteq D=D(0,(\frac{2}{\alpha})^{\frac{1}{\alpha}})$. Since for a positive definite matrix $\begin{bmatrix}A & B \\ B^* & D\end{bmatrix}$ we have 
$$
\det\begin{bmatrix}A & B \\ B^* & D\end{bmatrix}\le \det(A)\det(D). 
$$
 Therefore we have
$$
\det (M_n(rU))\le \det (M_{n-1}(rU)).\int_{(rU)^c}\varphi_n(z)\bar{\varphi_n(z)}dm(z)\le \det (M_{n-1}(rU)).
$$
So $\P[\X_n^{(\alpha)}(rU)=0]=\det (M_n(rU))$ is decreasing and decreases to $\P[\X_{\infty}^{(\alpha)}(rU)=0]$. Therefore,  for all $n\ge 2r^{\alpha}$, we have
\begin{align}\label{eqn:up}
\P[\X_{2r^{\alpha}}^{(\alpha)}(rU)=0]\ge \P[\X_n^{(\alpha)}(rU)=0]\ge \P[\X_{\infty}^{(\alpha)}(rU)=0].
\end{align}
Again for $n\ge 2r^{\alpha}$, we have 
\begin{align}\label{eqn:det}
\P[\X_n^{(\alpha)}(rU)=0]= \det(M_n(rU))=\det(M_{2r^{\alpha}}(rU))\det([M_n(rU)/M_{2r^{\alpha}}(rU)]),
\end{align}
where $[M_n(rU)/M_{2r^{\alpha}}(rU)]$ is  the Schur complement of the block $M_{2r^{\alpha}}(rU)$ of the matrix $M_{n}(rU)$. Recall, the Schur complement of the block $D$ of the matrix  
$$
M=\begin{bmatrix}A & B \\ C & D\end{bmatrix}\;\;\mbox{  is  }\;\; [M/D]=A-BD^{-1}C.
$$
The inverse of block matrix $M$ is given by 
$$
M^{-1}=\begin{bmatrix}[M/D]^{-1} & -A^{-1}B[M/A]^{-1} \\ -D^{-1}C[M/D]^{-1} & [M/A]^{-1}\end{bmatrix},
$$
where  $[M/A]=D-CA^{-1}B$. Since $M_n(rU)\ge M_n(rD)\ge 0$, $(M_n(rD))^{-1}\ge (M_n(rU))^{-1}$ and hence $[M_n(rD)/M_{2r^{\alpha}}(rD)]^{-1}\ge [M_n(rU)/M_{2r^{\alpha}}(rU)]^{-1}$.  Therefore the Schur complements satisfy the inequality
$$
[M_n(rU)/M_{2r^{\alpha}}(rU)]\ge [M_n(rD)/M_{2r^{\alpha}}(rD)].
$$
Therefore, min-max theorem for eigenvalues we have that the $i$-th largest eigenvalue of $[M_n(rU)/M_{2r^{\alpha}}(rU)]$ is greater than $i$-th largest eigenvalue of $[M_n(rD)/M_{2r^{\alpha}}(rD)]$. Hence we have
\begin{align}\label{eqn:schur}
\det([M_n(rU)/M_{2r^{\alpha}}(rU)])\ge \det([M_n(rD)/M_{2r^{\alpha}}(rD)]).
\end{align}
As $D$ is rotationally invariant,  we have 
$$
\int_{(rD)^c}\varphi_i(z)\bar{\varphi_j(z)}dm(z)=0\;\;\; \mbox{for all $i\neq j$}.
$$
Therefore $M_n(rD)=\mbox{diag}\left(\int_{(rD)^c}|\varphi_1(z)|^2dm(z),\ldots,\int_{(rD)^c}|\varphi_n(z)|^2dm(z)\right)$. From \eqref{eqn:schur} 
\begin{eqnarray}\label{eqn:tailed}
\det([M_n(rU)/M_{2r^{\alpha}}(rU)])&\ge&\prod_{k=2r^{\alpha}+1}^{n}\int_{(rD)^c}|\varphi_k(z)|^2dm(z)\nonumber
\\&\ge &\prod_{k=2r^{\alpha}+1}^{\infty}\int_{(rD)^c}|\varphi_k(z)|^2dm(z).
\end{eqnarray}
Again, for $k>2r^{\alpha}$, we have
\begin{align*}
\int_{(rD)^c}|\varphi_k(z)|^2dm(z)&=\P\left[R_{k+1}^{\alpha}>\frac{2}{\alpha}r^{\alpha}\right]=1-\P\left[R_{k+1}^{\alpha}\le \frac{2}{\alpha}r^{\alpha}\right]
\\&\ge 1-\P\left[R_{k+1}^{\alpha}< \frac{k+1}{\alpha}\right]=1-\P\left[\frac{R_{k+1}^{\alpha}}{k+1} <\frac{1}{\alpha}\right]
\\&\ge 1-e^{-c.k},
\end{align*}
last inequality from the large deviation (Crammer's bound) for Gamma$(\frac{2}{\alpha},1)$ random variable, as $R_k^{\alpha}\stackrel{d}{=}X_1+X_2+\cdots+X_k$ and $\E X_1=\frac{2}{\alpha}$ (where $X_1,\ldots, X_k$ are i.i.d. Gamma$(\frac{2}{\alpha},1)$ distributed). Therefore, for large $r$
$$
\prod_{k=2r^{\alpha}+1}^{\infty}\int_{(rD)^c}|\varphi_k(z)|^2dm(z)\ge e^{-2.\sum_{k=2r^{\alpha}}^{\infty}e^{-ck}}\ge C>0.
$$
Therefore from \eqref{eqn:tailed} we get 
\begin{align*}
\det([M_n(rU)/M_{2r^{\alpha}}(rU)])\ge C.
\end{align*}
 Therefore from \eqref{eqn:det}, for large $r$, we have 
\begin{eqnarray}\label{eqn:low}
\P[\X_{\infty}^{(\alpha)}(rU)=0]&=&\lim_{n\to \infty}\P[\X_n^{(\alpha)}(rU)=0]\ge C.\P[\X_{2r^{\alpha}}^{(\alpha)}(rU)=0].
\end{eqnarray}
Therefore from \eqref{eqn:up} and \eqref{eqn:low} and Theorem \ref{thm:gholeprobability12}, we get 
\begin{align}\label{eqn:master}
\lim_{r\to \infty}\frac{1}{r^{2\alpha}}\log\P[\X_{\infty}^{(\alpha)}(rU)=0]&=\lim_{r\to \infty}\frac{1}{r^{2\alpha}}\log\P[\X_{2r^{\alpha}}^{(\alpha)}(rU)=0]\nonumber
\\&=\lim_{n\to \infty}\frac{4}{n^2}\log\P[\X_{n}^{(\alpha)}(n^{\frac{1}{\alpha}}\cdot 2^{-\frac{1}{\alpha}}.U)=0].
\end{align}
Since $U$ satisfies the conditions of Theorem \ref{thm:gholeprobability12}, from Theorem \ref{thm:allalpha} we have
\begin{eqnarray*}
\lim_{r\to \infty}\frac{1}{r^{2\alpha}}\log\P[\X_{\infty}^{(\alpha)}(rU)=0]&=&4.\left(R_{\emptyset}^{(\alpha)}-R_{2^{-\frac{1}{\alpha}}.U}^{(\alpha)}\right)
\\&=&-4.R_{ 2^{-\frac{1}{\alpha}}.U}^{(\alpha)'}
=-R_U^{(\alpha)'}=R_{\emptyset}^{(\alpha)}-R_U^{(\alpha)},
\end{eqnarray*}
third equality follows from Theorem \ref{thm:generalsetting} and $R_{a.U}^{(\alpha)'}=a^{2\alpha}R_U^{(\alpha)'}$ (see Remark \ref{re:scale}).
\end{proof}

The next result gives the hole probabilities for $\X_{\infty}^{(\alpha)}$, when $\alpha\ge 1$, in another class of domains.

\begin{theorem}\label{thm:geninfinite2}
Let $U$ be an open subset of $ D(0,(\frac{2}{\alpha})^{\frac{1}{\alpha}})$ satisfying the condition \eqref{eqn:gcondition}. Then for $\alpha\ge 1$, 
$$
\lim_{r\to \infty}\frac{1}{r^{2\alpha}}\log\P[\X_{\infty}^{(\alpha)}(rU)=0]=R_{\emptyset}^{(\alpha)}-R_U^{(\alpha)}.
$$
\end{theorem}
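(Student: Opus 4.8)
The plan is to follow the proof of Theorem~\ref{thm:geninfinite} almost verbatim, changing only the single step in which the finite-$n$ hole probability asymptotics are invoked. The key observation is that the reduction of the infinite-ensemble hole probability to the finite-ensemble one is purely structural: it uses nothing about the geometry of $U$ beyond the inclusion $U\subseteq D:=D(0,(\frac{2}{\alpha})^{\frac{1}{\alpha}})$.

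First I would reproduce, unchanged, the determinantal identity $\P[\X_n^{(\alpha)}(rU)=0]=\det(M_n(rU))$ with $M_n(rU)=(\int_{(rU)^c}\varphi_i(z)\overline{\varphi_j(z)}\,dm(z))_{1\le i,j\le n}$, the monotone convergence $\det(M_n(rU))\downarrow\P[\X_\infty^{(\alpha)}(rU)=0]$, and the sandwich \eqref{eqn:up} valid for $n\ge 2r^\alpha$. Next, the Schur complement comparison $[M_n(rU)/M_{2r^\alpha}(rU)]\ge[M_n(rD)/M_{2r^\alpha}(rD)]$, combined with the diagonalization of $M_n(rD)$ and the large deviation (Cramér) tail bound for the $\mbox{Gamma}(\frac{2}{\alpha},1)$ summands, yields a positive constant $C$ with $\P[\X_\infty^{(\alpha)}(rU)=0]\ge C\,\P[\X_{2r^\alpha}^{(\alpha)}(rU)=0]$ for large $r$. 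Since every ingredient here depends only on $U\subseteq D$, all of this transfers directly to the present hypotheses, and combining the two bounds exactly as in \eqref{eqn:master} gives
$$
\lim_{r\to\infty}\frac{1}{r^{2\alpha}}\log\P[\X_\infty^{(\alpha)}(rU)=0]=\lim_{n\to\infty}\frac{4}{n^2}\log\P[\X_n^{(\alpha)}\big(n^{\frac{1}{\alpha}}2^{-\frac{1}{\alpha}}U\big)=0].
$$

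The only place the hypothesis on $U$ matters is in evaluating this final limit. In Theorem~\ref{thm:geninfinite} one appeals to Theorem~\ref{thm:allalpha}; here I would instead appeal to Theorem~\ref{thm:alphagreaterthan1}, which is precisely the finite-$n$ statement valid for sets obeying the exterior ball condition \eqref{eqn:gcondition} when $\alpha\ge1$. To apply it I must verify that the rescaled set $2^{-\frac{1}{\alpha}}U$ again satisfies \eqref{eqn:gcondition} and lies in $D$. Both are routine: the exterior ball condition is scale invariant (if $U$ admits exterior balls of radius $\epsilon$, then $sU$ admits exterior balls of radius $s\epsilon$ for any $s>0$), and since $D$ is a disk centered at the origin and $2^{-\frac{1}{\alpha}}<1$ we have $2^{-\frac{1}{\alpha}}U\subseteq 2^{-\frac{1}{\alpha}}D\subseteq D$. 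Feeding this into Theorem~\ref{thm:alphagreaterthan1} and then using the scaling relation $R_{a\cdot U}^{(\alpha)'}=a^{2\alpha}R_U^{(\alpha)'}$ from Remark~\ref{re:scale}, exactly as in the closing computation of the proof of Theorem~\ref{thm:geninfinite}, produces $R_\emptyset^{(\alpha)}-R_U^{(\alpha)}$. Because the whole architecture is inherited from Theorem~\ref{thm:geninfinite}, I anticipate no genuine obstacle; the only points deserving care are the scale invariance of \eqref{eqn:gcondition} and the fact that the restriction $\alpha\ge1$ is forced precisely by the use of Theorem~\ref{thm:alphagreaterthan1}.
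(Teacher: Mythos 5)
Your proposal is correct and is essentially the paper's own proof: the paper likewise reuses the reduction \eqref{eqn:master} from the proof of Theorem~\ref{thm:geninfinite} (which indeed depends only on $U\subseteq D(0,(\frac{2}{\alpha})^{\frac{1}{\alpha}})$) and then invokes Theorem~\ref{thm:alphagreaterthan1} together with the scaling relation $R_{a\cdot U}^{(\alpha)'}=a^{2\alpha}R_U^{(\alpha)'}$. Your explicit check that the exterior ball condition \eqref{eqn:gcondition} is scale invariant and that $2^{-\frac{1}{\alpha}}U\subseteq D(0,(\frac{2}{\alpha})^{\frac{1}{\alpha}})$ is a small point the paper leaves implicit, but it changes nothing in the argument.
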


\begin{proof}[Proof of Theorem \ref{thm:geninfinite2}]
From \eqref{eqn:master} we have 
\begin{align*}
\lim_{r\to \infty}\frac{1}{r^{2\alpha}}\log\P[\X_{\infty}^{(\alpha)}(rU)=0]
=\lim_{n\to \infty}\frac{4}{n^2}\log\P[\X_{n}(n^{\frac{1}{\alpha}}\cdot 2^{-\frac{1}{\alpha}}.U)=0],
\end{align*}
for all $\alpha>0$. Since $U$ satisfies the condition \eqref{eqn:gcondition},  from Theorem \ref{thm:alphagreaterthan1} we have 
\begin{align*}
\lim_{r\to \infty}\frac{1}{r^{2\alpha}}\log\P[\X_{\infty}^{(\alpha)}(rU)=0]
=4.\left(R_{\emptyset}^{(\alpha)}-R_{2^{-\frac{1}{\alpha}}.U}^{(\alpha)}\right).
\end{align*}
The result follows from Theorem \ref{thm:generalsetting} and  $R_{a.U}^{(\alpha)'}=a^{2\alpha}R_U^{(\alpha)'}$.
\end{proof}

As a corollary of Theorem \ref{thm:geninfinite} and Theorem \ref{thm:geninfinite2} we get the asymptotics of the hole probabilities for infinite Ginibre ensemble $\X_{\infty}^{(2)}$, proved in \cite{hole}.

\begin{cor}
Let $U$ be an open subset of  $\;\D$ satisfying the condition of Theorem \ref{thm:gholeprobability12} or \eqref{eqn:gcondition}. Then 
$$
\lim_{r\to \infty}\frac{1}{r^{4}}\log\P[\X_{\infty}^{(2)}(rU)=0]=R_{\emptyset}^{(2)}-R_U^{(2)}.
$$ 
\end{cor}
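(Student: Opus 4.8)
The plan is to obtain this corollary as the special case $\alpha = 2$ of the two general results already proved, namely Theorem \ref{thm:geninfinite} and Theorem \ref{thm:geninfinite2}. Nothing new needs to be computed; the entire content is to check that, when $\alpha = 2$, the hypotheses and the conclusions of those theorems specialize exactly to the assertion about the infinite Ginibre ensemble.

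First I would record the numerical specializations that occur at $\alpha = 2$. The radius of the limiting disk is $(\frac{2}{\alpha})^{\frac{1}{\alpha}} = (\frac{2}{2})^{\frac{1}{2}} = 1$, so the ambient set $D(0,(\frac{2}{\alpha})^{\frac{1}{\alpha}})$ becomes precisely the open unit disk $\D$, matching the hypothesis that $U$ be an open subset of $\D$. The scaling exponent $r^{2\alpha}$ becomes $r^{4}$, which is the normalization appearing in the corollary. By the definitions in Section \ref{sec:exdet}, $\X_{\infty}^{(\alpha)}$ with $\alpha = 2$ is the infinite Ginibre ensemble $\X_{\infty}^{(2)}$, and the decay constant $R_{\emptyset}^{(\alpha)} - R_{U}^{(\alpha)}$ reads $R_{\emptyset}^{(2)} - R_{U}^{(2)}$.

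Next I would split according to the two alternative hypotheses allowed in the statement. If $U$ satisfies the conditions of Theorem \ref{thm:gholeprobability12}, then applying Theorem \ref{thm:geninfinite} with $\alpha = 2$ gives at once
$$
\lim_{r\to \infty}\frac{1}{r^{4}}\log\P[\X_{\infty}^{(2)}(rU)=0]=R_{\emptyset}^{(2)}-R_{U}^{(2)}.
$$
If instead $U$ satisfies the exterior ball condition \eqref{eqn:gcondition}, then since $\alpha = 2 \ge 1$ the restriction $\alpha \ge 1$ in Theorem \ref{thm:geninfinite2} is met, and applying that theorem with $\alpha = 2$ yields the identical limit. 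In either case the claimed formula holds.

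There is essentially no obstacle: the corollary is a pure specialization, and the two cases cover exactly the two permitted hypotheses. The only point meriting a moment's attention is verifying that the constraint $\alpha \ge 1$ in Theorem \ref{thm:geninfinite2} — which is there because $g(r) = r^{\alpha}$ fails the boundedness of $g'$ needed for the Fekete-separation argument when $\alpha < 1$ — is automatically satisfied at $\alpha = 2$, which it plainly is.
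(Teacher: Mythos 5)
Your proposal is correct and is exactly the paper's route: the corollary is stated there as an immediate specialization of Theorem \ref{thm:geninfinite} and Theorem \ref{thm:geninfinite2} to $\alpha=2$, where $(\tfrac{2}{\alpha})^{1/\alpha}=1$ turns the ambient disk into $\D$, $r^{2\alpha}$ into $r^4$, and the constraint $\alpha\ge 1$ is trivially met. Nothing further is needed.
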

\chapter{Hole probabilities for finite $\beta$-ensembles in the complex plane}\label{ch:beta}
In this chapter we calculate the hole probabilities for finite $\beta$-ensembles in the complex plane, using potential theory techniques. The $\beta$-ensembles are a generalization of joint probability distributions of eigenvalues of random matrix ensembles. These ensembles appear in physics to explain the $2$-dimensional Coulomb gas models \cite{coulomb}.

Consider a family of point processes  $\X_{n,\beta}^{(g)}$, for $\beta>0$ and the function $g$ as in Section \ref{sec:exdet}, in the complex plane  with $n$ points. The joint density of the set of points of $\X_{n,\beta}^{(g)}$ (with uniform order) is defined by 
\begin{align}\label{eqn:betadensity}
\frac{1}{Z_{n,\beta}^{(g)}}\prod_{i<j}|z_i-z_j|^{\beta}e^{-n\sum_{k=1}^ng(|z_k|)}
\end{align}
with respect to Lebesgue measure on $\C^n$, where $Z_{n,\beta}^{(g)}$ is the normalizing constant, i.e.,
$$
Z_{n,\beta}^{(g)}=\int\ldots \int \prod_{i<j}|z_i-z_j|^{\beta}e^{-n\sum_{k=1}^ng(|z_k|)}\prod_{k=1}^{n}dm(z_k).
$$
In general $\beta$-ensembles, except $\beta=2$, are not determinantal processes. If $\beta=2$, then $\X_{n,\beta}^{(g)}$ are the determinantal point processes $\X_{n}^{(g)}$, defined in Section \ref{sec:exdet}. In particular, if $\beta=2$ and $g(|z|)=|z|^2$, then $\X_{n,\beta}^{(g)}$ is the scaled (by ${1}/{\sqrt n}$) $n$-th Ginibre ensemble. 

These processes in the complex plane and  the analogous processes in the real line have been studied extensively, e.g. see \cite{andersonbook}, \cite{coulomb} and \cite{bloom}. The large deviation for  $\X_{n,\beta}^{(g)}$ has been studied by Bloom \cite{bloom}. We refer the reader to see Section 2.6  of \cite{andersonbook} for the large deviation results for $\beta$-ensembles in the real line. In this chapter we calculate the hole probabilities for $\X_{n,\beta}^{(g)}$, using potential theory techniques.

In this chapter we use the following notation:
\begin{align*}
R_{U,\beta}^{(g)}=\inf \{R_{\mu,\beta}^{(g)}\suchthat \mu \in \mathcal P(U^c)\},\mbox{ where } R_{\mu,\beta}^{(g)}=\iint \log\frac{1}{|z-w|}d\mu(z)d\mu(w)+\frac{2}{\beta}\int g(|z|)d\mu(z),
\end{align*}
where $U$ is an open subset of $D(0,T_{\beta})$, $T_{\beta}$ denotes the solution of $tg'(t)=\beta$. The following remarks are useful for calculating the hole probabilities for $\X_{n,\beta}^{(g)}$.
\begin{remark}
Replacing $g$ by $\frac{2}{\beta}g$ in Theorem \ref{thm:diskgeneral} and Theorem \ref{thm:generalsetting}, we have following results.
\begin{enumerate}
\item The equilibrium measure for $\C$ with respect to the external field $\frac{g(|z|)}{\beta}$ is supported on $D(0,T_{\beta})$ and given by (setting $z=re^{i\theta}$)
\begin{align*}
d\mu_{\beta}(z)=\frac{1}{2\beta\pi}[g''(r)+\frac{1}{r}g'(r)]dm(z) \;\;\mbox{when $|z|<T_{\beta}$}.
\end{align*} 
The minimum energy is given by
$$
R_{\emptyset,\beta}^{(g)}=\log \frac{1}{T_{\beta}}+\frac{2}{\beta}g(T_{\beta})-\frac{1}{2\beta}\int_0^{T_{\beta}}r(g'(r))^2dr.
$$

\item Let  $U$ be an open subset of $D(0,T_{\beta})$. Then from Theorem \ref{thm:generalsetting}, we have
\begin{align*}
R_{U,\beta}^{(g)}=R_{\emptyset,\beta}^{(g)}+\frac{1}{\beta}\l[\int g(|z|)d\nu_2(z)-\int g(|z|)d\mu_2(z)\r],
\end{align*}
where $\nu_{2}$ is the balayage measure on $\partial
U$ with respect to the measure $\mu_2=\mu_{\beta}\big |_{U}$.

\item If $g(t)=t^{\alpha}$. Then $T_{\beta}=\l(\frac{\beta}{\alpha}\r)^{\frac{1}{\alpha}}$, the radius of the support of the equilibrium measure. In particular, $\alpha=2, \beta=2$ give $T_2=1$, the radius of the support of the equilibrium measure (unit disk) with quadratic external field.
\end{enumerate}

\end{remark}

\noindent Let $U$ be an open set. Then from \eqref{eqn:betadensity}, we have
\begin{align}\label{eqn:probhole}
&\P[\X_{n,\beta}^{(g)}(U)=0]\nonumber
\\=&\frac{1}{z_{n,\beta}^{(g)}}\int_{U^c}\ldots \int_{U^c} \prod_{i<j}|z_i-z_j|^{\beta}e^{-n\sum_{k=1}^ng(|z_k|)}\prod_{k=1}^{n}dm(z_k)\nonumber
\\=&\frac{1}{z_{n,\beta}^{(g)}}\int_{U^c}\ldots \int_{U^c}e^{-n\cdot\frac{\beta}{2}\l\{\frac{1}{n^2}\sum_{i\neq j}\log\frac{1}{|z_i-z_j|}+\frac{2}{\beta}\cdot\frac{1}{n}\sum_{k=1}^{n}g(|z_k|)\r\}}\prod_{k=1}^{n}dm(z_k).
\end{align}

\noindent Similar to the previous chapters we have hole probabilities results for $\X_{n,\beta}^{(g)}$, for two classes of open sets.
\begin{theorem}
Let $U$ be an open subset of $D(0,T_{\beta})$ satisfying the condition of Theorem \ref{thm:gholeprobability12}. Then
\begin{align*}
\lim_{n\to \infty}\frac{1}{n^2}\log \P[\X_{n,\beta}^{(g)}(U)=0]=-\frac{\beta}{2}\l[R_{U,\beta}^{(g)}-R_{\emptyset,\beta}^{(g)}\r].
\end{align*}
\end{theorem}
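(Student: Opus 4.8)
The plan is to establish matching upper and lower bounds for $\frac{1}{n^2}\log\P[\X_{n,\beta}^{(g)}(U)=0]$ that parallel the determinantal case $\beta=2$ treated in Theorem~\ref{thm:upperbound} and Theorem~\ref{thm:gholeprobability12}, together with the normalization asymptotics of Corollary~\ref{corollary}. The starting point is the rewriting \eqref{eqn:probhole}: the integrand is $\prod_{i<j}|z_i-z_j|^{\beta}e^{-n\sum_k g(|z_k|)}$, which I factor using the weight $\omega_{\beta}(z)=e^{-g(|z|)/\beta}$. Since $\{\prod_{i<j}|z_i-z_j|\omega_{\beta}(z_i)\omega_{\beta}(z_j)\}^{\beta}=\prod_{i<j}|z_i-z_j|^{\beta}\prod_{k}e^{-(n-1)g(|z_k|)}$, the density equals this expression times the harmless factor $\prod_k e^{-g(|z_k|)}$. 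In other words, the relevant weighted equilibrium problem is the one with external field $Q=g/\beta$, i.e.\ the one obtained by replacing $g$ with $\frac{2}{\beta}g$ in Theorem~\ref{thm:diskgeneral} and Theorem~\ref{thm:generalsetting}; this is exactly the field whose energy functional is $R_{\cdot,\beta}^{(g)}$ and whose equilibrium measure lives on $D(0,T_\beta)$. This identification is what makes $R_{U,\beta}^{(g)}$, $R_{\emptyset,\beta}^{(g)}$ and the hypothesis $U\subset D(0,T_\beta)$ the correct objects.

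For the \textbf{upper bound} I would follow the Fekete-point argument of Theorem~\ref{thm:upperbound} verbatim. If $z_1^*,\dots,z_n^*$ are the $\omega_\beta$-weighted Fekete points for $U^c$, then for any configuration in $(U^c)^n$ one has $\{\prod_{i<j}|z_i-z_j|\omega_\beta(z_i)\omega_\beta(z_j)\}^{\beta}\le(\delta_n^{\omega_\beta}(U^c))^{\beta n(n-1)/2}$, whence
$$
\P[\X_{n,\beta}^{(g)}(U)=0]\le\frac{a^n}{Z_{n,\beta}^{(g)}}\,(\delta_n^{\omega_\beta}(U^c))^{\beta n(n-1)/2},\qquad a=\int_{U^c}e^{-g(|z|)}dm(z)<\infty.
$$
Taking $\frac{1}{n^2}\log$ and using \eqref{eqn:limit} (which gives $\delta_n^{\omega_\beta}(U^c)\to e^{-R_{U,\beta}^{(g)}}$, since $\omega_\beta=e^{-Q}$ with $Q=g/\beta$ forces $R_\mu=R_{\mu,\beta}^{(g)}$) yields $\limsup\frac{1}{n^2}\log\P[\X_{n,\beta}^{(g)}(U)=0]\le-\frac{\beta}{2}R_{U,\beta}^{(g)}-\liminf\frac{1}{n^2}\log Z_{n,\beta}^{(g)}$. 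The same estimate with $U=\emptyset$ (Fekete points on all of $\C$) gives $\limsup\frac{1}{n^2}\log Z_{n,\beta}^{(g)}\le-\frac{\beta}{2}R_{\emptyset,\beta}^{(g)}$.

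For the \textbf{lower bound} I would reproduce Lemma~\ref{thm:gholeprobability}. Inserting a test density $f$ supported on $U^c$ and bounded by $M$, then applying Jensen's inequality to the probability measure $\prod_k f(z_k)\,dm(z_k)$, the cross term $\prod_{i<j}|z_i-z_j|^{\beta}$ contributes $\beta\frac{n(n-1)}{2}\iint\log|z_1-z_2|f\,f$ and the external field contributes $-n^2\int g\,f$; dividing by $n^2$ and letting $n\to\infty$ produces exactly $-\frac{\beta}{2}R_{\mu,\beta}^{(g)}-\limsup\frac{1}{n^2}\log Z_{n,\beta}^{(g)}$ for the measure $\mu$ with density $f$. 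The truncation and mollification steps of Lemma~\ref{thm:gholeprobability} extend this to every $\mu\in\mathcal A=\{\mu:\dist(\supp(\mu),\bar U)>0\}$, and the same computation for $U=\emptyset$ gives $\liminf\frac{1}{n^2}\log Z_{n,\beta}^{(g)}\ge-\frac{\beta}{2}R_{\emptyset,\beta}^{(g)}$, so that $\lim\frac{1}{n^2}\log Z_{n,\beta}^{(g)}=-\frac{\beta}{2}R_{\emptyset,\beta}^{(g)}$ as in Corollary~\ref{corollary}. Finally, invoking the hypotheses of Theorem~\ref{thm:gholeprobability12} with the $\beta$-equilibrium measure $\mu_\beta$ and the balayage measures on $\partial U_n$, one identifies $\inf_{\mu\in\mathcal A}R_{\mu,\beta}^{(g)}=R_{U,\beta}^{(g)}$ via the monotone sets $U_n\downarrow\bar U$ and the weak convergence $\nu_n\to\nu$, exactly as in the passage $R_{U_m}^{(g)}\to R_U^{(g)}$ there. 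Combining the two bounds with the value of $Z_{n,\beta}^{(g)}$ gives $\lim\frac{1}{n^2}\log\P[\X_{n,\beta}^{(g)}(U)=0]=-\frac{\beta}{2}[R_{U,\beta}^{(g)}-R_{\emptyset,\beta}^{(g)}]$.

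The main obstacle --- and really the only place where something must be checked rather than copied --- is verifying that the weighted potential theory transfers correctly under the rescaling $g\mapsto\frac{2}{\beta}g$: one must confirm that $\omega_\beta=e^{-g/\beta}$ is an admissible weight so that \eqref{eqn:limit} and Fact~\ref{ft:balayage} apply, and that the balayage identification $\inf_{\mathcal A}R_{\mu,\beta}^{(g)}=R_{U,\beta}^{(g)}$ holds under ``the condition of Theorem~\ref{thm:gholeprobability12}'' now read for the field $g/\beta$ on $D(0,T_\beta)$. Everything else is the $\beta=2$ argument with the exponent $2$ replaced by $\beta$, which is precisely what accounts for the global prefactor $\frac{\beta}{2}$.
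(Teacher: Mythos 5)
Your proposal is correct and follows essentially the same route as the paper: the paper's proof is the one-line instruction to repeat the calculations of Theorem~\ref{thm:upperbound} and Theorem~\ref{thm:gholeprobability12} on the expression \eqref{eqn:probhole}, and your write-up simply carries out that adaptation explicitly, with the weight $\omega_{\beta}=e^{-g/\beta}$ replacing $e^{-g/2}$, the exponent $\beta$ on the Vandermonde producing the prefactor $\tfrac{\beta}{2}$, and \eqref{eqn:limit} together with the Jensen/balayage lower bound identifying the rate with $R_{U,\beta}^{(g)}-R_{\emptyset,\beta}^{(g)}$. Your flagged caveat about admissibility of $e^{-g/\beta}$ is a fair point that the paper also leaves implicit in its remark ``replacing $g$ by $\tfrac{2}{\beta}g$''.
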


\begin{proof}
Applying the same calculations as in the proofs of Theorem \ref{thm:upperbound} and Theorem \ref{thm:gholeprobability12} in \eqref{eqn:probhole}, we get the result.
\end{proof}

The following result gives the hole probabilities for another class of open sets. 
\begin{theorem}
Let $U$ be an open subset of $D(0,T_{\beta})$ satisfying the condition \eqref{eqn:gcondition} and $g'$ is bounded on $[0,T_{\beta}+1]$. Then
\begin{align*}
\lim_{n\to \infty}\frac{1}{n^2}\log \P[\X_{n,\beta}^{(g)}(U)=0]=-\frac{\beta}{2}\l[R_{U,\beta}^{(g)}-R_{\emptyset,\beta}^{(g)}\r].
\end{align*}
\end{theorem}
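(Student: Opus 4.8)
The plan is to mirror the $\beta=2$ arguments of Theorem \ref{thm:upperbound}, Theorem \ref{thm:gholeprobability1} and Lemma \ref{lem:feketedistance}, carrying an extra factor of $\beta$ through the Fekete-point estimates. The first step is to rewrite the integrand of \eqref{eqn:probhole} in weighted Fekete form using the admissible weight $\omega(z)=e^{-g(|z|)/\beta}$, so that the external field is $Q=g/\beta$ and the associated weighted energy is exactly $R_{\mu,\beta}^{(g)}$. Here the reduction $\tilde g=\tfrac{2}{\beta}g$ (used already in the preceding remark) puts $\omega$ into the canonical form $e^{-\tilde g/2}$, with $T_\beta$ solving $t\tilde g'(t)=2$, i.e. $tg'(t)=\beta$. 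Since each point occurs in $n-1$ factors and $\omega(z_k)^{\beta(n-1)}e^{-ng(|z_k|)}=e^{-g(|z_k|)}$, one has
\[
\prod_{i<j}|z_i-z_j|^{\beta}e^{-n\sum_k g(|z_k|)}
=\l[\prod_{i<j}|z_i-z_j|\,\omega(z_i)\omega(z_j)\r]^{\beta}\prod_{k=1}^n e^{-g(|z_k|)},
\]
which is the exact analogue of \eqref{eqn:feket}, only with the square replaced by a $\beta$-th power.

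For the upper bound I would bound the bracket by $\delta_n^{\omega}(U^c)^{n(n-1)/2}$ and use $\int_{U^c}e^{-g(|z|)}dm(z)\le a<\infty$ (finiteness from the integrability condition on $g$); raising to the power $\beta$ and invoking \eqref{eqn:limit} gives $\limsup_n\tfrac{1}{n^2}\log\P[\X_{n,\beta}^{(g)}(U)=0]\le -\tfrac{\beta}{2}R_{U,\beta}^{(g)}-\liminf_n\tfrac{1}{n^2}\log Z_{n,\beta}^{(g)}$. An essential separate ingredient is the normalizing-constant asymptotic $\lim_n\tfrac{1}{n^2}\log Z_{n,\beta}^{(g)}=-\tfrac{\beta}{2}R_{\emptyset,\beta}^{(g)}$, the $\beta$-analogue of Corollary \ref{corollary}: its upper half comes from the same Fekete supremum applied with $U=\emptyset$, and its lower half from the Jensen-inequality argument of the proof of Lemma \ref{thm:gholeprobability}, optimized over $\mu$. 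Substituting this yields the upper bound $-\tfrac{\beta}{2}\l[R_{U,\beta}^{(g)}-R_{\emptyset,\beta}^{(g)}\r]$.

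For the matching lower bound I would repeat the localization argument of Theorem \ref{thm:gholeprobability1}. The separation needed is Lemma \ref{lem:feketedistance} applied to $\omega(z)=e^{-g(|z|)/\beta}$; since $\tilde g'=\tfrac{2}{\beta}g'$ is bounded on $[0,T_\beta+1]$ (as $g'$ is), the lemma and the inclusion $|z_\ell^*|\le T_\beta$ hold verbatim. Restricting the integral in \eqref{eqn:probhole} to the product of balls $B_\ell=U^c\cap B(z_\ell^*,C/n^4)$ about the Fekete points, and using $|z_i-z_j|\ge |z_i^*-z_j^*|(1-2/n)$ together with $\omega(z_i)\ge\omega(z_i^*)e^{-C'/n^4}$, one obtains
\[
\P[\X_{n,\beta}^{(g)}(U)=0]\ge \frac{e^{-g(T_\beta+1)n}}{Z_{n,\beta}^{(g)}}\l(1-\tfrac{2}{n}\r)^{\frac{\beta n(n-1)}{2}}e^{o(n^2)}\,\delta_n^{\omega}(U^c)^{\frac{\beta n(n-1)}{2}}\l(\pi\l(\tfrac{C}{2n^4}\r)^2\r)^n.
\]
Taking $\tfrac{1}{n^2}\log$ and letting $n\to\infty$, all polynomial and $e^{o(n^2)}$ factors disappear, \eqref{eqn:limit} contributes $-\tfrac{\beta}{2}R_{U,\beta}^{(g)}$, and the normalizing-constant limit contributes $+\tfrac{\beta}{2}R_{\emptyset,\beta}^{(g)}$, matching the upper bound.

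The genuinely delicate step, and the one I expect to require the most care, is the normalizing-constant asymptotic for general $\beta$: unlike the determinantal case $\beta=2$ there is no Kostlan-type factorization, so both halves must come from the potential-theoretic (Fekete/Jensen) bounds rather than from an explicit product formula, and one must verify that the hypotheses on $g$ force $Z_{n,\beta}^{(g)}<\infty$ and that the external field $g/\beta$ is admissible on $\C$ (i.e. $|z|\,e^{-g(|z|)/\beta}\to 0$, which holds in particular for $g(r)=r^\alpha$). Everything else is a bookkeeping replacement of the exponent $2$ by $\beta$ in the Fekete estimates.
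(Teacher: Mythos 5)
Your proposal is correct and is exactly the route the paper takes: the paper's proof of this theorem is literally the instruction to repeat the arguments of Theorem \ref{thm:upperbound}, Theorem \ref{thm:gholeprobability1} and Lemma \ref{lem:feketedistance} with the weight $\omega(z)=e^{-g(|z|)/\beta}$ (equivalently, with $g$ replaced by $\tfrac{2}{\beta}g$), together with the $\beta$-analogue of Corollary \ref{corollary} for $Z_{n,\beta}^{(g)}$, and your write-up supplies precisely those steps, including the correct identification of $T_\beta$ and of the weighted energy with $R_{\mu,\beta}^{(g)}$. The only blemish is the inline justification ``$\omega(z_k)^{\beta(n-1)}e^{-ng(|z_k|)}=e^{-g(|z_k|)}$'', which should read $e^{-ng(|z_k|)}=\omega(z_k)^{\beta(n-1)}e^{-g(|z_k|)}$; your displayed factorization is nevertheless correct.
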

\begin{proof}
Following the same calculations as in the proofs of Theorem \ref{thm:upperbound} and Theorem \ref{thm:gholeprobability1} in \eqref{eqn:probhole}, we get the result.
\end{proof}





\chapter{Fluctuations}\label{ch:fluctuations}

In this chapter we move away from the hole probabilities. We calculate variances for linear statistics of a family determinantal point processes on the unit disk.

Let $\mathcal X_L$ be determinantal point processes in the unit disk  with kernels $\mathbb{K}_L$ with respect to the measures $\mu_L$ for $L>0$, where
$$
\mathbb{K}_L(z,w)=\frac{1}{(1-z\bar{w})^{L+1}}\;\;\mbox{and}\;\;d\mu_L(z)=\frac{L}{\pi}(1-|z|^2)^{L-1}dm(z),
$$
for $z,w\in \mathbb{D}$ and $m$ is Lebesgue measure on $\mathbb{D}$. The processes $\X_L$, for positive integer $L$, come from the singular points of matrix valued Gaussian analytic functions \cite{manju}. In the case of $L=1$ was proved in \cite{balint}. 
\begin{result}[Krishnapur]\label{manjunath}
Let $G_k$, $k\ge 0,$ be i.i.d. $L\times L$ matrices, each with i.i.d. standard complex Gaussian entries. Then for each $L\ge 1,$ the singular points of $G_0+zG_1+z^2G_2+\cdots,$ that is to say, the zeros of $\det(G_0+zG_1+z^2G_2+\cdots),$ form a determinantal point process on the unit disc, with kernel $\mathbb{K}_L$ with respect to the measure $\mu_L$.
\end{result}

\noindent 
Let $\varphi$ be a compactly supported function on unit disk. The linear statistics of $\X_L$, for given $\varphi$, is
$$
\mathcal X_L(\varphi)=\sum_{z\in \mathcal X_L}\varphi(z).
$$  
Note that if $\varphi=\one_D$ for some $D\subset \D$, then $\X_L(\varphi)$ is just $\X_L(D)$, the number of points of $\X_L$ that fall in $D$. Let $\mathbb{V}[X]$ be the variances of the random variable $X$. In this chapter we calculate $\mathbb{V}[\X_L(r\D)]$ and $\mathbb V(\X_L(\varphi))$, when  $\varphi(z)=(1-\frac{|z|^2}{r^2})_+^{\frac{p}{2}}$ for $p>0$, as $r\to 1^-$.

The problem is inspired by the work of Buckley \cite{jery}. He consider the point processes $\X_{f_L}$ in the unit disk for $L>0$, the zeros set of hyperbolic Gaussian analytic functions
$$
f_L(z)=\sum_{k=0}^{\infty}\sqrt{\frac{L(L+1)\cdots(L+k-1)}{k!}}\cdot a_kz^k,
$$ 
where $a_k$ are i.i.d. standard complex Gaussian random variables. He proved the following result.
\begin{result}[Jeremiah Buckley, \cite{jery}]
The variances of $\X_{f_L}(r\D)$ are given below:
\begin{enumerate}
\item For each fixed $L>\frac{1}{2}$, as $r\to 1^-$,
$$
\mathbb V[\X_{f_L}(r\D)]=\Theta\l(\frac{1}{1-r}\r), 
$$
\item For $L=\frac{1}{2}$, 
$$
\mathbb V[\X_{f_L}(r\D)]=\Theta\l(\frac{1}{1-r}\log\frac{1}{1-r}\r), 
$$

\item For each fixed $L<\frac{1}{2}$, as $r\to 1^-$,
$$
\mathbb V[\X_{f_L}(r\D)]=\Theta\l(\frac{1}{(1-r)^{2-2L}}\r), 
$$
\end{enumerate}
\end{result}
\noindent
The bounds show that there is a transition in the variances at $L=\frac{1}{2}$. We mentioned that Buckley, Nishry, Peled and Sodin \cite{peled} calculated the hole probabilities for $\X_{f_L}$. However, we are not considering the hole probability problem for $\X_L$. We have following two results for variances.  

\begin{theorem}\label{thm}
For fixed L, as $r\to 1^-$, 
$$
\mathbb{V}[\X_{L}(r\D)]=\Theta\l(\frac{1}{1-r}\r).
$$
In other words, for fixed L, as $r\to 1^-$,
$$
c_2\le (1-r)\mathbb{V}[\X_{L}(r\D)]\le c_1
$$
where $c_1,c_2$ are constants depending on $L$.
\end{theorem}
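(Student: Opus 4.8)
The plan is to express the number variance as a sum of independent Bernoulli variances, which is possible because $\mathbb{K}_L$ is a self-adjoint projection kernel and $r\D$ is rotationally invariant, and then to estimate that sum; the upper bound will be immediate, while the lower bound carries all the content.

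\textbf{Step 1: variance as a Bernoulli sum.} For a determinantal process with a Hermitian kernel one has the standard identity
$$
\mathbb{V}[\X_L(r\D)]=\int_{r\D}\mathbb{K}_L(z,z)\,d\mu_L(z)-\int_{r\D}\int_{r\D}|\mathbb{K}_L(z,w)|^2\,d\mu_L(z)\,d\mu_L(w).
$$
The functions $\varphi_k(z)=\sqrt{\binom{L+k}{k}}\,z^k$ form an orthonormal basis of the holomorphic subspace of $L^2(\D,\mu_L)$, and $\mathbb{K}_L(z,w)=\sum_{k\ge0}\varphi_k(z)\overline{\varphi_k(w)}$. Because $r\D$ is rotationally invariant, the angular integration forces $\int_{r\D}\varphi_k\overline{\varphi_l}\,d\mu_L=0$ for $k\ne l$; writing $p_k=p_k(r)=\int_{r\D}|\varphi_k|^2\,d\mu_L$, the two terms above collapse to $\sum_k p_k$ and $\sum_k p_k^2$, whence
$$
\mathbb{V}[\X_L(r\D)]=\sum_{k\ge0}p_k(1-p_k).
$$
An elementary computation in polar coordinates gives $p_k(r)=I_{r^2}(k+1,L)$, the regularized incomplete Beta function, i.e. the probability that a $\mathrm{Beta}(k+1,L)$ variable does not exceed $r^2$. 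In particular $p_k$ is strictly decreasing in $k$, with $p_0=1-(1-r^2)^L\to1$ and $p_k\to0$.

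\textbf{Step 2: upper bound.} Since $p_k(1-p_k)\le p_k$,
$$
\mathbb{V}[\X_L(r\D)]\le\sum_{k\ge0}p_k=\E[\X_L(r\D)]=\frac{L}{\pi}\int_{r\D}\frac{dm(z)}{(1-|z|^2)^2}=\frac{Lr^2}{1-r^2},
$$
which is $O\big((1-r)^{-1}\big)$ and supplies the constant $c_1$.

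\textbf{Step 3: lower bound (the crux).} I will bound below the number $N$ of indices $k$ with $p_k\in[\tfrac14,\tfrac34]$; since $x(1-x)\ge\tfrac{3}{16}$ on that interval, $\mathbb{V}[\X_L(r\D)]\ge\tfrac{3}{16}N$. As $p_k$ decreases monotonically, crossing $[\tfrac14,\tfrac34]$ costs a total drop of at least $\tfrac12$, so $N$ is at least $\tfrac12$ divided by the largest one-step decrement inside the window. That decrement is explicit: by the recurrence $I_s(a+1,b)=I_s(a,b)-s^a(1-s)^b/(aB(a,b))$,
$$
p_k-p_{k+1}=\frac{(r^2)^{k+1}(1-r^2)^{L}}{(k+1)\,B(k+1,L)}=\frac{\Gamma(k+L+1)}{\Gamma(k+2)\,\Gamma(L)}(r^2)^{k+1}(1-r^2)^{L}.
$$
The condition $p_k\in[\tfrac14,\tfrac34]$ forces the bulk of the $\mathrm{Beta}(k+1,L)$ law near $r^2$, i.e. $k+1\asymp L/(1-r^2)$; substituting this, together with $\Gamma(k+L+1)/\Gamma(k+2)\sim(k+1)^{L-1}$ and $(r^2)^{k+1}\asymp e^{-L}$, shows $p_k-p_{k+1}\le C_L(1-r)$ uniformly over the window. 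Hence $N\ge c_L/(1-r)$, which gives the constant $c_2$ and completes the proof.

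\textbf{Main obstacle.} The only delicate point is the increment estimate in Step 3: proving rigorously, for \emph{every} real $L>0$, both that the transition window $\{k:p_k\in[\tfrac14,\tfrac34]\}$ is located at $k\asymp(1-r)^{-1}$ and that its one-step decrements are of size $\Theta(1-r)$ there. This requires uniform control of the incomplete-Beta increments via Stirling's formula, handling the regimes $L<1$, $L=1$, $L>1$ separately; the localization can be obtained from monotonicity of $p_k$ together with a Chebyshev bound on the $\mathrm{Beta}(k+1,L)$ distribution. Everything else is routine.
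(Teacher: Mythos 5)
Your proposal is correct in outline but takes a genuinely different route from the paper's. The paper works directly with the identity $\mathbb{V}[\X_L(r\D)]=\int_{r\D}\int_{(r\D)^c}|\mathbb{K}_L(z,w)|^2\,d\mu_L(z)\,d\mu_L(w)$ (Lemma \ref{lem}): the upper bound is the same one-point-function bound you use, and the lower bound is obtained by passing to polar coordinates and bounding the angular integral from below on the range $|\theta_1-\theta_2|\le 1-r_1r_2$, which reduces everything to an explicit elementary integral and yields the concrete constant $\left(\pi 2^{L+2}3^{2L}\right)^{-1}$. You instead diagonalize the restricted kernel using rotational invariance, write $\mathbb{V}[\X_L(r\D)]=\sum_k p_k(1-p_k)$ with $p_k=I_{r^2}(k+1,L)$, and count the indices in the transition window $p_k\in[\tfrac14,\tfrac34]$. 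Your route is conceptually more illuminating (it exhibits the variance as the number of ``boundary modes'', which is of order $(1-r)^{-1}$, and is the standard mechanism for invariant determinantal projections), while the paper's route is more elementary and produces explicit constants with no asymptotics of special functions. The one incomplete point is exactly the step you flag: the localization $k+1\asymp L/(1-r^2)$ of the window and the uniform decrement bound $p_k-p_{k+1}\le C_L(1-r)$ there. These do go through — Chebyshev for the $\mathrm{Beta}(k+1,L)$ law gives the localization, and the explicit increment formula together with $\Gamma(k+1+L)/\Gamma(k+2)\le C_L(k+1)^{L-1}$ and $(r^2)^{k+1}\le 1$ gives the decrement bound for every $L>0$, the cases $L<1$ and $L\ge 1$ differing only in whether one uses the lower or the upper end of the window to control $(k+1)^{L-1}$ — but as written this part is a sketch rather than a proof, so you should carry it out before the argument can be considered complete.
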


\noindent Note that there is no transition in the variances. Let $\varphi_p(z)=(1-\frac{|z|^2}{r^2})_+^{\frac{p}{2}}$ for $p>0$ and $0<r<1$. 

\begin{theorem}\label{thm:result2}
For fixed $L>0$ and as $r\to 1^-$, we have
\begin{enumerate}
\item if $p<1$, then
$$
\mathbb{V}[\mathcal X_{L}(\varphi_p)]=\Theta((1-r)^{-(1-p)}),
$$

\item if $p=1$, then
$$
\mathbb{V}[\mathcal X_{L}(\varphi_p)]=\Theta(-\log(1-r)),
$$

\item if $p>1$, then
$$
\mathbb{V}[\mathcal X_{L}(\varphi_p)]=\Theta(1).
$$
\end{enumerate}

\end{theorem}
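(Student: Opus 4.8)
The plan is to reduce the computation to a one–parameter family of independent ``radial modes.'' The key structural fact is that $\mathbb{K}_L$ is the reproducing kernel of the weighted Bergman space of holomorphic $L^2(\D,\mu_L)$ functions, so the normalized monomials $\phi_k(z)=c_k z^k$, with $c_k^2=\frac{\Gamma(k+1+L)}{\Gamma(k+1)\Gamma(L+1)}$, form an orthonormal eigenbasis and $\mathbb{K}_L(z,w)=\sum_{k\ge 0}\phi_k(z)\overline{\phi_k(w)}$. Because $\mathbb{K}_L$ is then a projection kernel, the standard variance identity for determinantal processes reads
\[
\mathbb{V}[\X_L(\varphi)]=\int_\D \varphi^2\,\mathbb{K}_L(z,z)\,d\mu_L(z)-\iint_{\D^2}\varphi(z)\varphi(w)\,|\mathbb{K}_L(z,w)|^2\,d\mu_L(z)\,d\mu_L(w).
\]
First I would exploit rotational invariance: for radial $\varphi=\varphi_p$ the cross terms $\langle \varphi\phi_k,\phi_l\rangle$ carry the angular integral $\int_0^{2\pi}e^{i(k-l)\theta}\,d\theta$, which vanishes unless $k=l$. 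Setting $d\nu_k:=|\phi_k|^2\,d\mu_L$ (a probability measure), the variance collapses to a sum of single–mode variances,
\[
\mathbb{V}[\X_L(\varphi_p)]=\sum_{k=0}^{\infty}\Var_{\nu_k}(\varphi_p).
\]

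Next I would identify each mode explicitly. In polar coordinates the law of $t=|z|^2$ under $\nu_k$ has density proportional to $t^k(1-t)^{L-1}$ on $[0,1]$, i.e. $t\sim\mbox{Beta}(k+1,L)$. Since $\varphi_p(z)=(1-t/r^2)_+^{p/2}$ depends on $z$ only through $t$, the $k$-th summand is $\Var[(1-T_k/r^2)_+^{p/2}]$ with $T_k\sim\mbox{Beta}(k+1,L)$, and the whole problem becomes the asymptotics, as $\epsilon:=1-r^2\to 0^+$, of a completely explicit series. The guiding heuristic is that a mode contributes appreciably only when the bulk of $T_k$ straddles the cut-off $t=r^2$, i.e. when $1-\E T_k\approx L/k$ is comparable to $\epsilon$; thus the decisive modes are those with $k\asymp 1/(1-r)$.

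To make this precise I would pass to the boundary variable and use that, for large $k$, the rescaled quantity $k(1-T_k)$ converges in distribution to $G\sim\mbox{Gamma}(L)$ (density $u^{L-1}e^{-u}/\Gamma(L)$); under this approximation $\varphi_p\approx k^{-p/2}(G-k\epsilon)_+^{p/2}$, so that $\Var_{\nu_k}(\varphi_p)\approx k^{-p}f(k\epsilon)$ with $f(a):=\Var[(G-a)_+^{p/2}]$. Approximating the sum by an integral through $a=k\epsilon$ gives
\[
\mathbb{V}[\X_L(\varphi_p)]\approx \epsilon^{\,p-1}\int_0^{\infty}a^{-p}f(a)\,da .
\]
The trichotomy then falls out of the behaviour of $f$: one checks $f(a)\to f(0)=\Var[G^{p/2}]\in(0,\infty)$ as $a\to 0$ and $f(a)$ decays exponentially as $a\to\infty$, so $\int^{\infty}a^{-p}f(a)\,da$ always converges while $\int_0 a^{-p}f(a)\,da$ converges exactly when $p<1$. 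For $p<1$ the integral is a finite positive constant and $\mathbb{V}\asymp\epsilon^{\,p-1}\asymp(1-r)^{-(1-p)}$; for $p>1$ the factor $\epsilon^{\,p-1}\to 0$ is compensated by the divergence at the origin, and it is cleaner to argue directly that $\sum_k k^{-p}f(k\epsilon)\to f(0)\sum_k k^{-p}$, a finite positive limit, giving $\Theta(1)$; the borderline $p=1$ produces the logarithm via $\sum_{k\lesssim 1/\epsilon}k^{-1}\asymp\log(1/\epsilon)\asymp-\log(1-r)$.

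The main obstacle is upgrading these heuristics to two-sided $\Theta$-bounds. One needs uniform (in $k$ and $\epsilon$) control of the Beta-to-Gamma approximation and of the sum-to-integral passage, which is most delicate precisely in the transition window $k\asymp 1/\epsilon$ where the tail of the Beta law and the cut-off $r^2$ interact; separate care is required at small $k$ (where the Gamma approximation is invalid, but the terms are individually bounded and, for $p>1$, actually dominate the sum) and in extracting the sharp logarithmic rate when $p=1$. In all cases the strict positivity of $f$ just above its support edge supplies the matching lower bounds, while the exponential decay of $f$ and convergence of $\sum_k k^{-\min(p,\,2)}$-type tails control the upper bounds.
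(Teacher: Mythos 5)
Your reduction is correct and takes a genuinely different route from the paper. You diagonalize: since $\mathbb{K}_L$ is the reproducing (projection) kernel $\sum_k\phi_k(z)\overline{\phi_k(w)}$ with $\phi_k=c_kz^k$ orthonormal in $L^2(\D,\mu_L)$, and $\varphi_p$ is radial, the off-diagonal terms $\langle\varphi_p\phi_k,\phi_l\rangle$ vanish and the variance collapses to $\sum_k\Var_{\nu_k}(\varphi_p)$ with $|z|^2\sim\mbox{Beta}(k+1,L)$ under $\nu_k$; the trichotomy then comes from the one-dimensional asymptotics of these Beta variances, with the decisive modes at $k\asymp(1-r)^{-1}$ and the Gamma$(L)$ edge limit $k(1-T_k)\Rightarrow G$ supplying the profile $k^{-p}f(k\epsilon)$. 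The paper instead works directly with the double-integral formula of Lemma \ref{lem:fomula}: the upper bound is the crude inequality $\mathbb{V}[\mathcal X_L(\varphi_p)]\le\int\varphi_p^2\,\mathbb{K}_L(z,z)\,d\mu_L$ evaluated in polar coordinates (which in your language is just $\sum_k\E_{\nu_k}[\varphi_p^2]$), and the lower bounds come from a pointwise lower bound on the angular integral of $|1-z\bar w|^{-2(L+1)}$ (inequality \eqref{reduction}) followed by restriction to explicit rectangles in the $(s,t)=(|z|^2,|w|^2)$ plane where $|\varphi_p(z)-\varphi_p(w)|$ is bounded below. Your route is structurally cleaner, explains \emph{why} the answer is what it is (and in principle yields the sharp constants, e.g.\ $\epsilon^{p-1}\int_0^\infty a^{-p}f(a)\,da$ for $p<1$), and extends to arbitrary radial statistics; the paper's route avoids any distributional limit and needs only elementary two-variable integral estimates. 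The price of your approach is the part you yourself flag: to get two-sided $\Theta$-bounds you must replace the Gamma limit by uniform-in-$k$ two-sided estimates on $\Var[(1-T_k/r^2)_+^{p/2}]$ for $T_k\sim\mbox{Beta}(k+1,L)$, in particular a uniform lower bound $\gtrsim k^{-p}$ for $k$ in a window $[c_1/\epsilon,c_2/\epsilon]$ (for $p\le 1$) and the exponential tail bound $f(a)\lesssim a^{p+L-1}e^{-a}$ to control $k\gg1/\epsilon$; these are routine but not free, and the overall workload is comparable to the paper's direct estimates.
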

\noindent Note that there is a transition in variances  at $p=1$, independent of $L$. We state two lemmas to prove Theorem \ref{thm} and Theorem \ref{thm:result2}.

\begin{lemma}\label{lem}
Let $\mathcal X$ be a determinantal point process in the complex plane with kernel  $\mathbb{K}$ with respect to the measure $\mu$. If $D\subseteq\C$, then
$$
\mathbb{V}[\mathcal X(D)]=\int_D\int_{D^c}|\mathbb{K}(z,w)|^2d\mu(z)d\mu(w)
$$
where $D^c=\C\backslash D$.
\end{lemma}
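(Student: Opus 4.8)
The plan is to reduce the variance to an integral of the first two joint intensities of $\mathcal X$ and then to invoke the reproducing property of the kernel. Since $\mathcal X$ is determinantal with kernel $\mathbb K$ with respect to $\mu$, its correlation functions are $\rho_1(z)=\mathbb K(z,z)$ and
$$
\rho_2(z,w)=\det\begin{pmatrix}\mathbb K(z,z)&\mathbb K(z,w)\\\mathbb K(w,z)&\mathbb K(w,w)\end{pmatrix}=\mathbb K(z,z)\mathbb K(w,w)-|\mathbb K(z,w)|^2,
$$
where the last step uses that $\mathbb K$ is Hermitian (as it is for every kernel considered in this thesis), so that $\mathbb K(w,z)=\overline{\mathbb K(z,w)}$ and hence $\mathbb K(z,w)\mathbb K(w,z)=|\mathbb K(z,w)|^2$.

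From the definition of the joint intensities, applied with the disjoint test sets taken inside $D$ (after the standard approximation of $\mathbf 1_D$ by indicators of finite disjoint partitions), I would record the first moment $\mathbb E[\mathcal X(D)]=\int_D \mathbb K(z,z)\,d\mu(z)$ and the first factorial moment $\mathbb E[\mathcal X(D)(\mathcal X(D)-1)]=\int_D\int_D\rho_2(z,w)\,d\mu(z)\,d\mu(w)$. Writing the variance as
$$
\mathbb V[\mathcal X(D)]=\mathbb E[\mathcal X(D)(\mathcal X(D)-1)]+\mathbb E[\mathcal X(D)]-\big(\mathbb E[\mathcal X(D)]\big)^2
$$
and substituting $\rho_2(z,w)=\rho_1(z)\rho_1(w)-|\mathbb K(z,w)|^2$, the term $\int_D\int_D\rho_1(z)\rho_1(w)\,d\mu(z)\,d\mu(w)=\big(\int_D\rho_1\,d\mu\big)^2$ cancels against $-(\mathbb E[\mathcal X(D)])^2$, leaving
$$
\mathbb V[\mathcal X(D)]=\int_D\mathbb K(z,z)\,d\mu(z)-\int_D\int_D|\mathbb K(z,w)|^2\,d\mu(z)\,d\mu(w).
$$

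The final step is to rewrite the diagonal term. Since the kernels in question are orthogonal projections, $\mathbb K$ satisfies the reproducing identity $\int_{\C}\mathbb K(z,w)\mathbb K(w,u)\,d\mu(w)=\mathbb K(z,u)$, which for $u=z$ together with the Hermitian symmetry gives $\mathbb K(z,z)=\int_{\C}|\mathbb K(z,w)|^2\,d\mu(w)$. Splitting this integral over $\C=D\sqcup D^c$ yields $\int_D\mathbb K(z,z)\,d\mu(z)=\int_D\int_D|\mathbb K(z,w)|^2\,d\mu(z)\,d\mu(w)+\int_D\int_{D^c}|\mathbb K(z,w)|^2\,d\mu(z)\,d\mu(w)$; subtracting the first double integral cancels the corresponding term above and leaves exactly $\int_D\int_{D^c}|\mathbb K(z,w)|^2\,d\mu(z)\,d\mu(w)$, as claimed. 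The only point requiring genuine care, beyond bookkeeping, is this reproducing identity: it holds precisely because $\mathbb K$ is a Hermitian projection kernel built from an orthonormal family, as in Fact~\ref{ft:existence}, so I would flag that hypothesis explicitly rather than treat $\mathbb K$ as an arbitrary determinantal kernel.
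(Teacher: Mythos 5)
Your proof is correct and follows essentially the same route as the paper: the paper first proves the general linear-statistics formula $\mathbb V[\mathcal X(\varphi)]=\frac{1}{2}\iint|\varphi(z)-\varphi(w)|^2|\mathbb K(z,w)|^2\,d\mu\,d\mu$ (Lemma~\ref{lem:fomula}) by exactly your computation with $\rho_1,\rho_2$ and then specializes to $\varphi=\one_D$, whereas you carry out the same computation directly for the indicator. Your explicit flagging of the reproducing identity $\mathbb K(z,z)=\int|\mathbb K(z,w)|^2\,d\mu(w)$ is well taken --- the paper invokes it silently in passing from $\int\varphi\bar\varphi\,\mathbb K(z,z)\,d\mu$ to a double integral, and the lemma as stated does indeed require $\mathbb K$ to be a Hermitian projection kernel.
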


\begin{lemma}\label{lem:fomula}
Let $\mathcal X$ be a determinantal point process in the complex plane with kernel $\mathbb{K}(z,w)$ with respect to the measure $\mu$ and $\varphi$ be a compactly supported function on $\C$. Then
$\E[\mathcal X(\varphi)]=\int \varphi(z)\mathbb{K}(z,z)d\mu(z)$ and
$$
\mathbb{V}[\mathcal X(\varphi)]=\frac{1}{2}\int \int |\varphi(z)-\varphi(w)|^2|\mathbb{K}(z,w)|^2d\mu(z)d\mu(w),
$$
where $\mathcal X(\varphi)=\sum_{z\in \mathcal X}\varphi(z)$.
\end{lemma}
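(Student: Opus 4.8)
The plan is to reduce everything to the first two joint intensities of the determinantal process and then symmetrize. Recall that for a determinantal process with Hermitian kernel $\mathbb{K}$ one has $\rho_1(z)=\mathbb{K}(z,z)$ and, since $\mathbb{K}(w,z)=\overline{\mathbb{K}(z,w)}$,
$$
\rho_2(z,w)=\det\begin{pmatrix}\mathbb{K}(z,z)&\mathbb{K}(z,w)\\ \mathbb{K}(w,z)&\mathbb{K}(w,w)\end{pmatrix}=\mathbb{K}(z,z)\mathbb{K}(w,w)-|\mathbb{K}(z,w)|^2.
$$
The first assertion is immediate: starting from the defining moment formula $\E[\mathcal X(D)]=\int_D\rho_1\,d\mu$ and extending from indicators to compactly supported $\varphi$ by linearity and a standard approximation argument, I obtain $\E[\mathcal X(\varphi)]=\int \varphi(z)\mathbb{K}(z,z)\,d\mu(z)$.

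For the variance I would first compute the second moment of $\mathcal X(\varphi)=\sum_{z\in\mathcal X}\varphi(z)$. Writing $\mathcal X(\varphi)\overline{\mathcal X(\varphi)}=\sum_{z}|\varphi(z)|^2+\sum_{z\neq w}\varphi(z)\overline{\varphi(w)}$ and separating the diagonal from the off-diagonal part, the two pieces are governed by $\rho_1$ and $\rho_2$ respectively, giving
$$
\E|\mathcal X(\varphi)|^2=\int|\varphi(z)|^2\mathbb{K}(z,z)\,d\mu(z)+\iint \varphi(z)\overline{\varphi(w)}\,\rho_2(z,w)\,d\mu(z)\,d\mu(w).
$$
Subtracting $|\E[\mathcal X(\varphi)]|^2=\iint \varphi(z)\overline{\varphi(w)}\,\rho_1(z)\rho_1(w)\,d\mu(z)\,d\mu(w)$ and using $\rho_2(z,w)-\rho_1(z)\rho_1(w)=-|\mathbb{K}(z,w)|^2$ yields
$$
\mathbb{V}[\mathcal X(\varphi)]=\int|\varphi(z)|^2\mathbb{K}(z,z)\,d\mu(z)-\iint \varphi(z)\overline{\varphi(w)}\,|\mathbb{K}(z,w)|^2\,d\mu(z)\,d\mu(w).
$$

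Finally, to recover the stated symmetric form I would invoke the reproducing (projection) property of the kernel, namely $\int|\mathbb{K}(z,w)|^2\,d\mu(w)=\int \mathbb{K}(z,w)\mathbb{K}(w,z)\,d\mu(w)=\mathbb{K}(z,z)$, which rewrites the first term above as $\iint|\varphi(z)|^2|\mathbb{K}(z,w)|^2\,d\mu(z)\,d\mu(w)$. Then expanding $\tfrac12|\varphi(z)-\varphi(w)|^2=\tfrac12\big(|\varphi(z)|^2+|\varphi(w)|^2\big)-\mathrm{Re}\big(\varphi(z)\overline{\varphi(w)}\big)$ and using the symmetry $|\mathbb{K}(z,w)|^2=|\mathbb{K}(w,z)|^2$ to identify the two cross terms, the whole expression collapses exactly to $\frac12\iint|\varphi(z)-\varphi(w)|^2|\mathbb{K}(z,w)|^2\,d\mu(z)\,d\mu(w)$; specializing $\varphi=\one_D$ then recovers Lemma~\ref{lem} as a consistency check. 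The only genuine obstacles are bookkeeping: justifying Fubini's theorem and the interchange of expectation with the (possibly infinite) point sum, where the compact support of $\varphi$ together with local integrability of $\mathbb{K}(z,z)$ is used, and the reproducing identity itself, which is available because the kernels under consideration are Hermitian projection kernels in the sense of Fact~\ref{ft:existence}.
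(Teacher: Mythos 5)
Your proposal is correct and follows essentially the same route as the paper: compute the second moment via the diagonal and off-diagonal parts governed by $\rho_1$ and $\rho_2$, use $\rho_2(z,w)-\rho_1(z)\rho_1(w)=-|\mathbb{K}(z,w)|^2$, convert the single integral to a double one via the reproducing property $\int|\mathbb{K}(z,w)|^2\,d\mu(w)=\mathbb{K}(z,z)$, and symmetrize. The only difference is that you make the projection-kernel hypothesis and the Fubini/interchange justifications explicit, whereas the paper uses them silently.
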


Observe that Lemma \ref{lem} is a particular case of Lemma \ref{lem:fomula}, for $\varphi=\one_D$. The proof of Lemma \ref{lem:fomula} can be found in \cite{subhro}, Proposition 4.1. For completeness we give a proof Lemma \ref{lem:fomula}. 

\begin{proof}[Proof of Lemma \ref{lem:fomula}]
We have
$$
\E[\mathcal X(\varphi)]=\E(\sum_{z\in \mathcal X}\varphi(z))=\int \varphi(z)\rho_1(z)d\mu(z)=\int \varphi(z)\mathbb{K}(z,z)d\mu(z),
$$
where $\rho_1(z)$ is the one-point correlation function.
Now we have
\begin{align}\label{eqn:formula}
\mathbb{V}[\mathcal X(\varphi)]&=\E[(\mathcal X(\varphi)-\E[\mathcal X(\varphi)])(\bar{\mathcal X(\varphi)-\E[\mathcal X(\varphi)]})]\nonumber
\\&=\E\left(\sum_{z\in \mathcal X}\varphi(z)\sum_{w\in \mathcal X}\bar{\varphi(w)}\right)-\int \varphi(z)\mathbb{K}(z,z)d\mu(z)\int\bar{\varphi(w)}\mathbb{K}(w,w)d\mu(w).
\end{align}
Again we have
\begin{align}\label{eqn:reduction}
&\E\left(\sum_{z\in \mathcal X}\varphi(z)\sum_{w\in \mathcal X}\bar{\varphi(w)}\right)\nonumber
\\=&\E\left(\sum_z\varphi(z)\bar{\varphi(z)}\right)+\E\left(\sum_{z\neq w}\varphi(z)\bar{\varphi(w)}\right)\nonumber
\\=&\int\varphi(z)\bar{\varphi(z)}\rho_1(z)d\mu(z)+\int\int\varphi(z)\bar{\varphi(w)}\rho_2(z,w)d\mu(z)d\mu(w)\nonumber
\\=&\int\varphi(z)\bar{\varphi(z)}\mathbb{K}(z,z)d\mu(z)
+\int\int\varphi(z)\bar{\varphi(w)}\mathbb{K}(z,z)\mathbb{K}(w,w)d\mu(z)d\mu(w)
\\&-\int \int \varphi(z)\bar{\varphi(w)}|\mathbb{K}(z,w)|^2d\mu(z)d\mu(w).\nonumber
\end{align}
Therefore by (\ref{eqn:reduction}) from  (\ref{eqn:formula}) we get
\begin{eqnarray*}
&&\mathbb{V}[\mathcal X(\varphi)]
\\&=&\int \varphi(z)\bar{\varphi(z)}\mathbb{K}(z,z)d\mu(z)-\int\int \varphi(z)\bar{\varphi(w)}|\mathbb{K}(z,w)|^2d\mu(z)d\mu(w)
\\&=&\int\int \varphi(z)\bar{\varphi(z)}|\mathbb{K}(z,w)|^2d\mu(z)d\mu(w)-\int\int \varphi(z)\bar{\varphi(w)}|\mathbb{K}(z,w)|^2d\mu(z)d\mu(w)
\\&=&\frac{1}{2}\int \int [\varphi(z)\bar{\varphi(z)}-\varphi(z)\bar{\varphi(w)}-\varphi(w)\bar{\varphi(z)}+\varphi(w)\bar{\varphi(w)}]
|\mathbb{K}(z,w)|^2d\mu(z)d\mu(w)
\\&=&\frac{1}{2}\int \int |\varphi(z)-\varphi(w)|^2|\mathbb{K}(z,w)|^2d\mu(z)d\mu(w).
\end{eqnarray*}
Hence the result.

\end{proof}

Now we proceed to prove Theorem \ref{thm} and Theorem \ref{thm:result2}.

\begin{proof}[Proof of Theorem \ref{thm}]
\noindent{\bf Upper bound:} By Lemma \ref{lem}, we have
\begin{eqnarray*}
\mathbb{V}[\X_L(r\D)]&=&\int_{(r\D)}\int_{(r\D)^c}|\mathbb K_L(z,w)|^2d\mu_L(z)d\mu_L(w) \le\int_{D(0,r)}\mathbb K_L(z,z)d\mu_L(z)
\\&=&\frac{L}{\pi}\int_{D(0,r)}\frac{1}{(1-|z|^2)^2}dm(z)
=L\cdot\frac{r^2}{1-r^2}.
\end{eqnarray*}
Therefore we have, as $r\to 1^-$
\begin{eqnarray}\label{upperbound}
(1-r)\mathbb{V}[\X_L(r\D)]\le \frac{L}{2}.
\end{eqnarray}

\noindent{\bf Lower bound:}
From Lemma \ref{lem}, we have
\begin{eqnarray*}
\mathbb{V}[\X_L(r\D)]&=&\int_{(r\D)}\int_{(r\D)^c}|\mathbb{K}_L(z,w)|^2d\mu_L(z)d\mu_L(w)
\\&=&\frac{L^2}{\pi^2}\int_{(r\D)}\int_{(r\D)^c}\frac{(1-|z|^2)^{L-1}(1-|w|^2)^{L-1}}{|1-z\bar{w}|^{2(L+1)}}dm(z)dm(w).
\end{eqnarray*}
By writing $z=r_1e^{i\theta_1}$ and $w=r_2e^{i\theta_2}$, we have
\begin{align*}
&\mathbb{V}[\X_L(r\D)]\\&=\frac{L^2}{\pi^2}\int_{r_1=0}^{r}\int_{r_2=r}^{1}\int_{\theta_1=0}^{2\pi}\int_{\theta_2=0}^{2\pi}
\frac{(1-r_1^2)^{L-1}(1-r_2^2)^{L-1}}{(1+r_1^2r_2^2-2r_1r_2\cos(\theta_1-\theta_2))^{L+1}}
r_1r_2dr_1dr_2d\theta_1d\theta_2.
\end{align*}
Since the integrand depends on the difference $\theta_1-\theta_2$, hence
\begin{eqnarray}\label{formula}
\mathbb{V}[\X_L(r\D)]=\frac{2L^2}{\pi}\int_{r_1=0}^r\int_{r_2=r}^1\int_{0}^{2\pi}
\frac{(1-r_1^2)^{L-1}(1-r_2^2)^{L-1}}{(1+r_1^2r_2^2-2r_1r_2\cos\theta)^{L+1}}r_1r_2dr_1dr_2d\theta.
\end{eqnarray}
Let $a=r_1r_2$, then we have
\begin{align}\label{perticularcase}
\int_{0}^{2\pi}\frac{1}{(1+r_1^2r_2^2-2r_1r_2\cos\theta)^{L+1}}d\theta &=
\int_{0}^{2\pi}\frac{1}{((1-a)^2+2a(1-\cos\theta))^{L+1}}d\theta\nonumber
\\&=2\int_{0}^{\pi}\frac{1}{((1-a)^2+2a(1-\cos\theta))^{L+1}}d\theta\nonumber
\\&\ge2\int_{0}^{(1-a)}\frac{1}{((1-a)^2+2a(1-\cos\theta))^{L+1}}d\theta.
\end{align}
For $\theta\le 1-a$, we have
\begin{eqnarray*}
&&(1-a)^2+2a(1-\cos\theta)\le (1-a)^2+2a.\frac{\theta^2}{2}\le 2(1-a)^2
\\&\Rightarrow&\frac{1}{((1-a)^2+2a(1-\cos\theta))^{L+1}}\ge \frac{1}{2^{L+1}(1-a)^{2(L+1)}}.
\end{eqnarray*}
Therefore we have
\begin{eqnarray}\label{reduction}
\int_{0}^{(1-a)}\frac{1}{((1-a)^2+2a(1-\cos\theta))^{L+1}}d\theta
\ge \frac{1}{2^{L+1}(1-a)^{2L+1}}.
\end{eqnarray}
By (\ref{perticularcase}) and (\ref{reduction}), from (\ref{formula}) we get
\begin{eqnarray*}
\mathbb{V}[\X_L(r\D)]&\ge& \frac{4L^2}{\pi 2^{L+1}}\int_{r_1=0}^r\int_{r_2=r}^1\frac{(1-r_1^2)^{L-1}(1-r_2^2)^{L-1}}{(1-r_1r_2)^{2L+1}}r_1r_2dr_1dr_2
\\&\ge&\frac{4L^2}{\pi 2^{L+1}}\int_{r_1=0}^r\int_{r_2=r}^1\frac{(1-r_1^2)^{L-1}(1-r_2^2)^{L-1}}{(1-rr_1^2)^{2L+1}}r_1r_2dr_1dr_2,
\end{eqnarray*}
last inequality we have use the facts that $r_2\ge r$ and $r_1<1$. Therefore
\begin{eqnarray*}
\mathbb{V}[\X_L(r\D)]&=&\frac{L^2}{\pi 2^{L+1}}\int_{r_1=0}^{r^2}\int_{r_2=r^2}^1\frac{(1-r_1)^{L-1}(1-r_2)^{L-1}}{(1-rr_1)^{2L+1}}dr_1dr_2
\\&\ge&\frac{L^2(1-r)^{L-1}}{\pi 2^{L+1}}\int_{r_1=0}^{r^2}\frac{1}{(1-rr_1)^{2L+1}}dr_1\int_{r_2=r^2}^1(1-r_2)^{L-1}dr_2
\;\;(\mbox{as $r_1\le r^2$})
\\&=&\frac{L^2(1-r)^{L-1}}{\pi 2^{L+1}}.\frac{1}{2Lr}\{(1-r^3)^{-2L}-1\}.\frac{1}{L}(1-r^2)^L
\\&\ge&\frac{1}{r\pi2^{L+2}}\left\{\frac{1}{(1-r)^{2L}3^{2L}}-1\right\}(1-r)^{2L-1}.
\end{eqnarray*}
Therefore we have
\begin{eqnarray}\label{lowerbound}
(1-r)\mathbb{V}[\X_L(r\D)]\ge \frac{1}{\pi2^{L+2}3^{2L}} \;\;\mbox{as $r\to 1^-$}.
\end{eqnarray}
Finally, by (\ref{upperbound}) and (\ref{lowerbound}) we have
$$
c_2\le (1-r)\mathbb{V}[\X_L(r\D)]\le c_1\;\;\mbox{as $r\to 1^-$},
$$
where $c_1=\frac{L}{2}$ and $c_2=\frac{1}{2^{L+2}3^{2L}}$. Hence the result.
\end{proof}

%

Finally we prove Theorem \ref{thm:result2}.
\begin{proof}[Proof of Theorem \ref{thm:result2}]{\bf Upper bound:} By (\ref{eqn:reduction}) from (\ref{eqn:formula})
we have
\begin{eqnarray*}
\mathbb{V}[\mathcal X_L(\varphi_p)]
&\le&\int |\varphi_p(z)|^2\mathbb{K}_L(z,z)d\mu_L(z)
\\&=&\frac{L}{\pi}\int_{r\D} \left(1-\frac{|z|^2}{r^2}\right)^p\frac{1}{(1-|z|^2)^2}dm(z)
\\&=&\frac{L}{\pi} \int_{0}^r\int_0^{2\pi}\left(1-\frac{t^2}{r^2}\right)^p\frac{1}{(1-t^2)^2}tdt d\theta
\\&=&L\int_0^{r^2}\left(1-\frac{t}{r^2}\right)^p\frac{1}{(1-t)^2}dt
\\&=&\frac{L}{r^{2p}}\int_{1-r^2}^1(t-(1-r^2))^p\frac{1}{t^2}dt \;\;\;\;\mbox{ (by replacing $1-t$ by $t$)}
\\&\le&\frac{L}{r^{2p}}\int_{1-r^2}^{1}t^{p-2}dt
\\&=&\left\{\begin{array}{lcr}\frac{L}{r^{2p}(1-p)}\left(\frac{1}{(1-r^2)^{1-p}}-1\right)&if& p<1
\\-\frac{L}{r^{2p}}\log(1-r^2)&if & p=1\\\frac{L}{r^{2p}(p-1)}\left(1-\frac{1}{(1-r^2)^{p-1}}\right)&if& p>1\end{array}\right..
\end{eqnarray*}
Therefore as $r\to 1^-$ we have
\begin{eqnarray}\label{eqn:upper}
\begin{array}{ccr}
(1-r)^{1-p}\mathbb{V}[\mathcal X(\varphi_p)]\le \frac{L}{1-p}& \mbox{ when}& p<1,
\\-\frac{1}{\log{(1-r)}}\mathbb{V}[\mathcal X(\varphi_p)]\le L & \mbox{ when}& p=1,
\\\mathbb{V}[\mathcal X(\varphi_p)]\le \frac{L}{p-1}& \mbox{ when}& p>1.
\end{array}
\end{eqnarray}

\noindent{\bf Lower bound:} By Lemma \ref{lem:fomula} we have
\begin{eqnarray}\label{fomula2}
&&\mathbb{V}[\mathcal X_L(\varphi_p)]\nonumber
\\&=&\frac{1}{2}\int_{\mathbb D}\int_{\mathbb D}(\varphi_p(z)-\varphi_p(w))^2|
\mathbb{K}_L(z,w)|^2d\mu_L(z)d\mu_L(w)\nonumber
\\&=&\frac{1}{2}\frac{L^2}{\pi^2}\int_D\int_D\l(\l(1-\frac{|z|^2}{r^2}\r)^{p/2}-\l(1-\frac{|w|^2}{r^2}\r)^{p/2}\r)^2
|\mathbb{K}_L(z,w)|^2d\mu_L(z)d\mu_L(w)
\\&&+\frac{L^2}{\pi^2}\int_D\int_{D^c}\l(1-\frac{|z|^2}{r^2}\r)^p
|\mathbb{K}_L(z,w)|^2d\mu_L(z)d\mu_L(w)\nonumber
\end{eqnarray}
where $D^c=\mathbb D\backslash D$.
Therefore we have
\begin{eqnarray*}
&&\mathbb{V}[\mathcal X_L(\varphi_p)]
\\&\ge&\frac{1}{2}\frac{L^2}{\pi^2}\int_D\int_D\l(\l(1-\frac{|z|^2}{r^2}\r)^{p/2}
-\l(1-\frac{|w|^2}{r^2}\r)^{p/2}\r)^2|\mathbb{K}_L(z,w)|^2d\mu_L(z)d\mu_L(w)
\\&\ge &\frac{1}{2^{L+1}}\frac{L^2}{\pi}\int_0^{r^2}\int_0^{r^2}\l\{\l(1-\frac{s}{r^2}\r)^{p/2}-\l(1-\frac{t}{r^2}\r)^{p/2}\r\}^2
\frac{(1-s)^{L-1}(1-t)^{L-1}}{(1-st)^{2L+1}}dsdt\;\;\mbox{(by \eqref{reduction})}
\\&\ge &\frac{1}{2^{L+1}}\frac{L^2}{\pi}
\int_0^{\frac{r^2}{2}}\int_{\frac{2r^2}{3}}^{r^2}\l\{\l(1-\frac{s}{r^2}\r)^{p/2}-\l(1-\frac{t}{r^2}\r)^{p/2}\r\}^2
(1-s)^{L-1}(1-t)^{L-1}dsdt
\\&\ge&\frac{1}{2^{L+1}}\frac{L^2}{\pi}\l(\frac{1}{2^{p/2}}-\frac{1}{3^{p/2}}\r)^2
\int_0^{\frac{r^2}{2}}\int_{\frac{2r^2}{3}}^{r^2}(1-s)^{L-1}(1-t)^{L-1}dsdt
\\&\ge&\frac{1}{2^{L+1}}\frac{L^2}{\pi}\l(\frac{1}{2^{p/2}}-\frac{1}{3^{p/2}}\r)^2
\int_0^{\frac{1}{4}}\int_{\frac{2}{3}}^{\frac{3}{4}}(1-s)^{L-1}(1-t)^{L-1}dsdt,
\end{eqnarray*}
as $r\to 1^-$. Therefore we have
\begin{eqnarray}\label{eqn:lower1}
\mathbb{V}[\mathcal X_L(\varphi_p)]\ge C
\end{eqnarray}
where $C$ is some constant depending on $L,p$. Therefore by (\ref{eqn:upper}) and (\ref{eqn:lower1}) we have
$$
\mathbb{V}[\mathcal X_L(\varphi_p)]=\Theta(1),\;\;\;\;\mbox{as $r\to 1^-$ for $p>1$.}
$$

Now assume that $p<1$. By (\ref{fomula2}) we have
\begin{eqnarray*}
&&\mathbb{V}[\mathcal X_L(\varphi_p)]
\\&\ge &\frac{L^2}{\pi^2}\int_D\int_{D^c}\l(1-\frac{|z|^2}{r^2}\r)^p
|\mathbb{K}(z,w)|^2d\mu(z)d\mu(w)
\\&\ge& \frac{L^2}{\pi 2^{L+1}}\int_0^{r^2}\int_{r^2}^1\l(1-\frac{s}{r^2}\r)^p
\frac{(1-s)^{L-1}(1-t)^{L-1}}{(1-st)^{2L+1}}dsdt \;\;\;\mbox{(by \eqref{reduction})}
\\&\ge& \frac{L^2}{\pi 2^{L+1}}(1-r^2)^{L-1}\int_0^{r^2}\int_{r^2}^1\l(1-\frac{s}{r^2}\r)^p
\frac{(1-t)^{L-1}}{(1-sr^2)^{2L+1}}dsdt
\\&\ge& \frac{L^2}{\pi 2^{L+1}L}(1-r^2)^{L-1}(1-r^2)^L\int_0^{r^2}\l(1-\frac{s}{r^2}\r)^p
\frac{1}{(1-sr^2)^{2L+1}}ds
\\&=&\frac{L}{\pi 2^{L+1}}\frac{(1-r^2)^{2L-1}}{r^{4p+1}}\int_{1-r^4}^{1}\frac{(u-(1-r^4))^p}{u^{2L+1}}du
\mbox{ (putting, $1-sr^2=u$)}
\end{eqnarray*}
Choose $r<1$ such that $r^4>\frac{1}{2}$ which implies that $2(1-r^4)<1$. Hence we have
\begin{eqnarray*}
\mathbb{V}[\mathcal X_L(\varphi_p)]&\ge& \frac{L}{\pi 2^{L+1}}(1-r^2)^{2L-1}\int_{2(1-r^4)}^{1}\frac{(u-(1-r^4))^p}{u^{2L+1}}du
\\&\ge& \frac{L}{\pi 2^{L+1}}(1-r^2)^{2L-1}(1-r^4)^p\int_{2(1-r^4)}^{1}\frac{1}{u^{2L+1}}du
\\&\ge&\frac{L}{\pi 2^{L+1}}(1-r)^{2L-1}(1-r)^p\frac{1}{2L}[(2(1-r^4))^{-2L}-1]
\\&\ge&\frac{1}{\pi 2^{L+2}}(1-r)^{2L-1+p}[(2(1-r))^{-2L}-1]
\end{eqnarray*}
Therefore we have
\begin{eqnarray}\label{lower2}
(1-r)^{1-p}\mathbb{V}[\mathcal X_L(\varphi_p)]\ge C_1
\end{eqnarray}
as $r\to 1^-$, where $C_1$ is a positive constant which depends on $L$. Hence by (\ref{eqn:upper}) and (\ref{lower2}) we have
$$
\mathbb{V}[\mathcal X_L(\varphi_p)]=\Theta\l(\frac{1}{(1-r)^{1-p}}\r)
$$
as $r\to 1^-$, for $p<1$.

Finally assume that $p=1$. From (\ref{fomula2}) we have
\begin{align*}
\mathbb{V}[\mathcal X_L(\varphi_p)]
&\ge\frac{1}{2}\frac{L^2}{\pi^2}\int_D\int_D\l(\l(1-\frac{|z|^2}{r^2}\r)^{1/2}
-\l(1-\frac{|w|^2}{r^2}\r)^{1/2}\r)^2|\mathbb{K}(z,w)|^2d\mu(z)d\mu(w)
\\&\ge \frac{1}{2^{L+1}}\frac{L^2}{\pi^2}\int_0^{r^2}\int_0^{r^2}\l\{\sqrt{\l(1-\frac{s}{r^2}\r)}-\sqrt{\l(1-\frac{t}{r^2}\r)}\r\}^2
\frac{(1-s)^{L-1}(1-t)^{L-1}}{(1-st)^{2L+1}}dsdt,
\end{align*}
 last inequality follows from (\ref{reduction}). Therefore 
\begin{align*}
\mathbb{V}[\mathcal X_L(\varphi_p)]&\ge \frac{1}{2^{L+1}}\frac{L^2}{\pi^2}\int_{\delta}^{r^2}\int_{s^3}^{s^2/r^2}\l\{\sqrt{\l(1-\frac{s}{r^2}\r)}-\sqrt{\l(1-\frac{t}{r^2}\r)}\r\}^2
\frac{(1-s)^{L-1}(1-t)^{L-1}}{(1-st)^{2L+1}}dsdt,
\end{align*}
where $\delta >0$ and fixed. Since $s^3\le t\le \frac{s^2}{r^2}<s$, therefore
\begin{align*}
\mathbb{V}[\mathcal X_L(\varphi_p)]
&\ge \frac{1}{2^{L+1}}\frac{L^2}{\pi^2}\int_{\delta}^{r^2}\int_{s^3}^{s^2/r^2}\l\{\sqrt{\l(1+\frac{s}{r^2}\r)}-1\r\}^2\l(1-\frac{s}{r^2}\r)
\frac{(1-s)^{L-1}(1-s^2/r^2)^{L-1}}{(1-s^4)^{2L+1}}ds
\\&\ge \frac{1}{2^{L+1}4^{2L+1}}\frac{L^2}{\pi^2}\l\{\sqrt{\l(1+\frac{\delta}{r^2}\r)}-1\r\}^2\int_{\delta}^{r^2}\l(1-\frac{s}{r^2}\r)
\frac{s^2(1-r^2s)}{(1-s)^{3}}ds.
\\&\ge C.\int_{\delta}^{r^2}\l(1-\frac{s}{r^2}\r)\frac{1}{(1-s)^{2}}ds \;\;\;\mbox{(as $1-r^2s>1-s$)}
\\&\ge C.\int_{1-r^2}^{1-\delta}(u-(1-r^2))\frac{1}{u^{2}}du
\\&=C.[\log(1-\delta)-\log(1-r^2)+\frac{1-r^2}{1-\delta}-1]
\end{align*}
where $C$ is constant depending on $L$ and $\delta>0$ (fixed). Therefore for $p=1$, we have
\begin{eqnarray}\label{lower3}
\frac{\mathbb{V}[\mathcal X_L(\varphi_p)]}{-\log(1-r)}\ge C\;\;\mbox{ as $r\to 1^-$.}
\end{eqnarray}
Hence by (\ref{eqn:upper}) and (\ref{lower3}) we have
$$
\mathbb{V}[\mathcal X_L(\varphi_1)]=\Theta(-\log(1-r))\;\;\mbox{ as $r\to 1^-$.}
$$
Hence the result.
\end{proof}

\backmatter
\appendix
\bibliographystyle{amsalpha} \bibliography{PhD_thesis}

\end{document}